\theoremstyle{theorem}
\newtheorem{thm}{Theorem}[section]
\newtheorem{proposition}{Proposition}[section]
\newtheorem{cor}{Corollary}[section]
\newtheorem{lemma}{Lemma}[section]
\theoremstyle{definition}
\newtheorem{defi}{Definition}[section]
\theoremstyle{remark}
\newtheorem{rem}{Remark}[section]
\newcommand{\N}{\mathbb{N}}
\newcommand{\R}{\mathbb{R}}
\newcommand{\C}{\mathbb{C}}
\newcommand{\T}{\mathbb{T}}
\newcommand{\pa}{\partial}
\newcommand{\car}[1]{\mathbbm{1}_{#1}}
\newcommand{\sgn}{\mathrm{sgn}}
\newcommand{\dif}{\,\mathrm{d}}
\newcommand{\Mixzone}{\Omega_{\mathrm{mix}}}
\def\Xint#1{\mathchoice
	{\XXint\displaystyle\textstyle{#1}}%
	{\XXint\textstyle\scriptstyle{#1}}%
	{\XXint\scriptstyle\scriptscriptstyle{#1}}%
	{\XXint\scriptscriptstyle\scriptscriptstyle{#1}}%
	\!\int}
\def\XXint#1#2#3{{\setbox0=\hbox{$#1{#2#3}{\int}$}
		\vcenter{\hbox{$#2#3$}}\kern-.5\wd0}}
\def\dashint{\Xint-}  
\numberwithin{equation}{section}
\renewcommand*\l@section{\@dottedtocline{1}{0em}{1.5em}}
\renewcommand*\l@subsection{\@dottedtocline{2}{1.5em}{2.3em}}
\renewcommand*\l@subsubsection{\@dottedtocline{3}{3.8em}{3.7em}}
\title{Localized mixing zone for Muskat bubbles and turned interfaces}
\author{\'A. Castro, D. Faraco, F. Mengual}
\date{}
\begin{document}

\maketitle

\begin{abstract}
We construct mixing solutions to the incompressible porous media equation starting from Muskat type data in the partially unstable regime. In particular, we consider bubble and turned type interfaces with Sobolev regularity. As a by-product, we prove the continuation of the evolution of IPM after the Rayleigh-Taylor and smoothness breakdown exhibited in \cite{CCFGL12,CCFG13}. At each time slice the space is split into three evolving domains: two non-mixing zones and a mixing zone which is localized in a neighborhood of the unstable region. In this way, we show the compatibility between the classical Muskat problem and the convex integration method.
\end{abstract}

\section{Introduction and main results}\label{sec:Intro}

We consider two incompressible
fluids 
with different constant densities $\rho_{-}$, $\rho_{+}$
and equal viscosity $\mu$,
separated by a connected curve $z^\circ=(z_1^\circ,z_2^\circ)$ inside a 2D porous medium with constant permeability $\kappa$ (or Hele-Shaw cell \cite{SaffmanTaylor58}) and under the action of gravity $-g (0,1)$.
As we deal with closed and open curves, it is convenient to fix an orientation for $z^\circ$.
For closed curves we fix the clockwise orientation ($\circlearrowright$) and for open curves the orientation from $x_1=-\infty$ to $+\infty$. Then, we denote $\Omega_{-}^\circ$ ($\Omega_{+}^\circ$) by the domain to the left (right) side of $z^\circ$. 
Thus, the initial density will be written as
\begin{equation}\label{rho0}
\rho^\circ(x):=
\left\lbrace
\begin{array}{rl}
\rho_{-}, & x\in\Omega_{-}^\circ,\\[0.1cm]
\rho_{+}, & x\in\Omega_{+}^\circ,
\end{array}\right.
\end{equation}
for $x=(x_1,x_2)\in\R^2$.
It is widely accepted that the dynamic of this two-phase flow can be modelled by the \textbf{IPM} (Incompressible Porous Media) system
\begin{align}
\partial_t\rho+\nabla\cdot(\rho v) & = 0, \label{IPM:1}\\
\nabla\cdot v & = 0,\label{IPM:2}\\
\tfrac{\mu}{\kappa}v & = -\nabla p-\rho g(0,1), \label{IPM:3}
\end{align}
where $\rho(t,x)$ $\equiv$ density, $v(t,x)$ $\equiv$ velocity field, $p(t,x)$ $\equiv$ pressure.
By normalizing, we may assume w.l.o.g.~that $|\rho_{\pm}|=\mu=\kappa=g=1$.

The investigations on the Muskat problem (\cite{Muskat34}) which deals with the interface evolution under the assumption of immiscibility, have been very intense both in the applied community due to the many applications (see e.g.~\cite{WoodingMorel76,TryggvasonAref83,Homsy87,ManickamHomsy95}) and in the theoretical side as this constitutes a challenging free boundary problem.

Mathematically, the theory has bifurcated into  two regimes, the so-called stable regime and unstable regime. 
This division arises from  the linear stability analysis of the equation for the interface evolution. It is classical (see e.g.~\cite{CCG11}) that such linear stability 
is characterized by  the sign of the Rayleigh-Taylor function $\sigma:=(\rho_{+}-\rho_{-})\partial_{\alpha}z_1^\circ$ as follows:
\begin{subequations}
\label{stability}
\begin{align}
\textrm{stable}\quad\textrm{on}&\quad \sigma(\alpha)>0,\label{stability:S}\\
\textrm{unstable}\quad\textrm{on}&\quad \sigma(\alpha)\leq 0.\label{stability:U}
\end{align}
\end{subequations}
This simply classifies whether the heavier fluid remains (locally) below the lighter one or not.
If the initial interface is a graph, $z^\circ(\alpha)=(\alpha,f^\circ(\alpha))$,
the interface evolution is governed by a 
a nonlinear parabolic equation, which can be linearized as $\partial_tf=(\rho_{+}-\rho_{-})(-\Delta)^{1/2}f$. Hence,
 the stability simply depends on the sign of the density jump $\rho_{+}-\rho_{-}$.  Therefore, 
for $\rho_{+}>\rho_{-}$ (i.e.~the heavier fluid is below $z^\circ$) what is called the fully stable
regime, the analogy with the heat equation gives hope of well-posedness theory in a suitable Sobolev space $H^k$. We refer to the corresponding weak solutions to  IPM as non-mixing solutions (see \cite{Yi03,SCH04,Ambrose04,CordobaGancedo07,CCG11} for initial results). In the last years there have taken extensive steps to reduce the initial  $k$ (see \cite{CGS16,CGSV17,Matioc19,AlazardLazar20,NguyenPausader20}). The current world record is the result of Alazard and Nguyen \cite{AlazardNguyenpp3} where they have proved the critical case $k=3/2$ (see also \cite{AlazardNguyenpp1,AlazardNguyenpp2}). For small enough initial data these solutions are global-in-time.
Additional results of global well-posedness for medium size initial data can be found in \cite{CCGRS16,CCGS13} and global solutions with large initial slope in \cite{CordobaLazar20,Cameron19,DLL17}.

The instability in the linearization is called  Rayleigh-Taylor (or Saffman-Taylor \cite{SaffmanTaylor58}) for the Muskat problem. In the graph case, it corresponds to  $\rho_{+}<\rho_{-}$ (i.e.~the heavier fluid is above $z^\circ$) what is called the fully unstable
regime, and the analogy is now with  the backwards heat equation. Therefore, it is to be expected that the problem is ill-posed unless the initial data is real-analytic $C^\omega$. As a matter of fact, all the techniques available in the stable case  catastrophic fail in this situation. Indeed, it can be proved that in the fully unstable regime, $\sigma(\alpha) \le 0$ for every $\alpha$,
the Cauchy problem for $f$ is ill-posed in Sobolev spaces (see e.g.~\cite{SCH04}). However,  practical and numerical experiments  show  the existence of the so-called   mixing solutions, solutions in which there exist a mixing zone where the two fluids mix stochastically (see e.g.~\cite{WoodingMorel76,Homsy87}). Numerically, it can be seen that small disturbances of an analytic initial interface increases rapidly creating finger patterns at different scales in the unstable region (see e.g.~\cite{TryggvasonAref83,ManickamHomsy95} and Figure \ref{fig:initial}).

In spite of the fact that the linearized problem is ill-posed and in accordance with what is observed in the experiments,
weak solutions to IPM, in the fully unstable case, have been constructed in the last years by replacing the continuum free boundary assumption  with the opening of a mixing zone $\Mixzone$ where the fluids begin to mix indistinguishably. These mixing solutions $(\rho,v)$ are recovered by the convex integration method applied in $\Mixzone$ to a so-called ``subsolution'' $(\bar{\rho},\bar{v},\bar{m})$ (cf.~Section \ref{sec:subsolution}). These subsolutions are intended to be a kind of coarse-grained solutions to IPM, with $\bar{m}$ representing the relaxation of the momentum $\bar{\rho}\bar{v}$.  The subsolutions are very
related to the relaxed solutions appearing in the Lagrangian relaxation approach of Otto \cite{Otto99,Otto01} (see also \cite{JKM21}).

In the context of large data, an striking result from  \cite{CCFGL12,CCFG13} shows that there
exist analytic initial interfaces in the fully stable regime (i.e. a graph) such that part of the curve
turns to the unstable regime (i.e. no longer a graph) and later, at some $T_* > 0$, the interface $z(T_*)$ is analytic but at a point in the unstable region where it is not $C^4$. The argument in \cite{CCFG13} could be adapted to prove weaker singularities in $C^k$ where $k\geq 5$ (i.e. the interface leaves to be $C^k$ but is still $C^{k-1}$). Thus, the Rayleigh-Taylor instability can arise spontaneously and the regularity might break down. After the blow-up time $T_*$ it is to be expected that the Muskat problem 
is ill-posed.

In this paper we give a method to construct mixing solutions to IPM in the Muskat partially unstable case. The original motivation was to continue the solutions after the breakdown
described in the previous paragraph. 
However,  there are numerous scenarios which are partially unstable. In this work we will  concentrate on two of them: The so-called {\bf bubble interfaces} where the two fluids are separated by a closed chord-arc curve
(see \cite{GGPSpp} for the case with surface tension) and the {\bf turned interfaces} where the interface is an open chord-arc curve which cannot be parametrized as a graph.
We describe both scenarios readily, prior to the statement of the theorems.
 
The bubble type initial interfaces are described by
\begin{equation}\label{Omega0}
\begin{split}
\Omega_-^\circ&\equiv\textrm{ exterior domain of }z^\circ,\\
\Omega_+^\circ&\equiv\textrm{ interior domain of }z^\circ,
\end{split}
\end{equation}
with $\rho_{\pm}=\pm 1$, for some closed chord-arc curve $z^\circ\in H^{k}(\T;\R^2)$ with $k$ big enough (cf.~Figure \ref{fig:initial}(a)).
Recall that we have taken $z^\circ$ clockwise oriented ($\circlearrowright$) to be consistent with the notation in \eqref{rho0}.

The turned type initial interfaces are described by
\begin{equation}\label{Omega0R}
\begin{split}
\Omega_-^\circ&\equiv\textrm{ upper domain of }z^\circ,\\
\Omega_+^\circ&\equiv\textrm{ lower domain of }z^\circ,
\end{split}
\end{equation}
with $\rho_{\pm}=\pm 1$, 
for some open chord-arc curve $z^\circ$ whose turned region $\{\partial_{\alpha}z_1^\circ(\alpha) \leq 0\}$ has positive measure. Here we consider both the $x_1$-periodic case
$z^\circ-(\alpha,0)\in H^k(\T;\R^2)$ and the asymptotically flat case $z^\circ-(\alpha,0)\in H^{k}(\R;\R^2)$ with $k$ big enough (cf.~Figure \ref{fig:initial}(b)).

Now we are ready to state our two main theorems.

\begin{thm}\label{thm:main:1}
For every closed chord-arc curve $z^\circ\in H^6(\T;\R^2)$ there exist infinitely many mixing solutions to IPM starting from \eqref{rho0}\eqref{Omega0} with $\rho_{\pm}=\pm 1$.
\end{thm}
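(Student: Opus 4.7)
The plan is to carry out a two-step strategy: first, build a \emph{subsolution} $(\bar\rho,\bar v,\bar m)$ in the sense of Section \ref{sec:subsolution}, in which the support of the mixing, $\Mixzone$, is a thin strip that opens around the unstable arcs of the initial bubble and grows in time; second, apply the convex integration scheme in $\Mixzone$ to upgrade the subsolution into an infinite family of genuine weak solutions $(\rho,v)$ with $\rho\in\{-1,+1\}$ almost everywhere, each coinciding with $\bar\rho$ outside $\Mixzone$.

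For the first step, I would begin by splitting the clockwise-oriented curve $z^\circ$ into its stable portion $\{\partial_\alpha z_1^\circ>0\}$ and its unstable portion $\{\partial_\alpha z_1^\circ\leq 0\}$. On the stable arcs I would evolve the interface by the classical Muskat equation; since $z^\circ\in H^6$ and the equation is parabolic there, the corresponding arc $z_{cl}(t,\alpha)$ exists on a short time interval and stays smoother than the threshold required below. Around each unstable arc I would introduce a pair of pseudo-interfaces $z^\pm(t,\alpha)$ that bound $\Mixzone(t)$ from above and below, together with a central curve $\bar z(t,\alpha)$, and define $\bar\rho$ as a profile that interpolates linearly between $-1$ and $+1$ in a transverse (signed pseudo-distance) coordinate to $\bar z$, so that $\bar\rho=\pm 1$ on $z^\pm$ and the traces match $\rho^\circ$ off $\Mixzone$. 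The velocity $\bar v$ is then determined from $\bar\rho$ by Darcy's law via the associated Biot--Savart operator, and one selects $\bar m$ so as to satisfy the strict IPM relaxation inequality pointwise in $\Mixzone$.

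The central analytic obstacle I expect is the matching of the mixing zone with the classical Muskat evolution across the transition points where $\sigma(\alpha)=0$. At these points the Muskat operator degenerates, and if the mixing zone opens too slowly the subsolution loses strict admissibility, whereas if it opens too fast the stable-side evolution develops a boundary layer. The correct scaling seems to be essentially linear opening in time along the unstable arc, guided by the Lagrangian relaxation picture (cf.~\cite{CCFG13}). I expect the bulk of the work to be a quasilinear coupled system for $(z_{cl},z^+,z^-,\bar z)$ together with the transported density profile, for which one must prove short-time $H^k$ regularity ($k=6$ suffices, as in the statement) and strict admissibility of the subsolution up to $\partial\Mixzone$. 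The drift of $\bar z$ is dictated by the relaxed velocity, while the motion of $z^\pm$ encodes the geometric opening of $\Mixzone$; closing energy estimates requires commutator bounds as in the stable Muskat theory, with the backward-parabolic contribution from the unstable side absorbed by the growth rate of $\Mixzone$ and the smoothness of the interpolation profile.

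Once such a subsolution is available on $[0,T]$ for some $T>0$, step two is relatively standard, in the Tartar--Sz\'ekelyhidi--De~Lellis framework already used in earlier IPM mixing constructions. One applies a Baire-category argument in the space of $L^\infty$ perturbations supported in $\Mixzone$, adapted to the $\Lambda$-convex hull of the IPM wave cone, to produce a residual (hence uncountable) set of weak solutions of IPM that agree with $(\bar\rho,\bar v)$ outside $\Mixzone$ and satisfy $\rho\in\{-1,+1\}$ a.e.\ inside, thereby yielding the infinitely many mixing solutions claimed.
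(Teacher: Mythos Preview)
Your high-level strategy---build a subsolution with a localized mixing strip, then invoke the h-principle---matches the paper's. But the proposal as written has a genuine gap at exactly the point you flag as ``the central analytic obstacle,'' and your outline does not contain the idea that resolves it.

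The paper does \emph{not} evolve the stable arcs by the classical Muskat equation and separately evolve a coupled system $(z_{cl},z^+,z^-,\bar z)$ on the unstable arcs. The Muskat operator $B$ is nonlocal, so the velocity on the stable arc already depends on the geometry of the mixing strip; you cannot decouple. Instead the paper evolves a \emph{single} pseudo-interface $z(t,\alpha)$ by one equation on all of $\T$, of the form $\partial_t z=\psi_0 E+\psi_1 E^{1)}+\text{error}$, where $\{\psi_0,\psi_1\}$ is a partition of unity with $\mathrm{supp}\,\psi_1=\{c>0\}$ strictly containing the unstable set (so the strip opens slightly into the stable region---your proposal opens only ``around the unstable arcs,'' which is not enough for \eqref{c:neig}). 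The boundaries are then $z_\pm=z\mp tc\tau^\perp$, not independent unknowns.

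The key technical point you are missing is why one uses $E:=B_{+,+}+B_{-,-}$ rather than the full $B=\tfrac12(B_++B_-)$. The cross-terms $B_{a,b}$ with $a\neq b$ produce a factor $\partial_\alpha\log c$ in the $H^k$ energy estimates, which blows up at the cusps where $c\to 0$; this is precisely the obstruction to ``closing energy estimates'' that your sketch hand-waves past. The paper's resolution is to replace $B$ by $E$ in the evolution and absorb the discrepancy $E-B$ (together with $E^{1)}-E$ on the unstable side) into the relaxed momentum $\bar m$: one shows $E-B=O(t(c+|\partial_\alpha c|))$ and $E-B-t\kappa=O(t^2(c+|\partial_\alpha c|))$ (Lemmas \ref{lemma:E-Ba}, \ref{lemma:E-B}), so these errors are small in $L^\infty$, supported on $\{c>0\}$, and fit inside the strict inequality \eqref{hull:2} without needing derivative bounds. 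Your plan to ``absorb the backward-parabolic contribution by the growth rate of $\Mixzone$'' does not capture this mechanism.

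Finally, the paper takes $\bar\rho$ piecewise constant ($\bar\rho=0$ on $\Mixzone$, after \cite{ForsterSzekelyhidi18}), not a linear transverse profile. The linear profile is the harder route of \cite{CCFpp}; switching to it would not by itself break the argument, but it further complicates the pseudo-interface equation and is not what makes the partially unstable case go through.
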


\begin{thm}\label{thm:main:2}  For every open chord-arc curve $z^\circ$, either $x_1$-periodic $z^\circ-(\alpha,0)\in H^6(\T;\R^2)$ or asymptotically flat $z^\circ-(\alpha,0)\in H^{6}(\R;\R^2)$, whose turned region $\{\partial_{\alpha}z_1^\circ(\alpha) \leq 0\}$ has positive measure there exist infinitely many mixing solutions to IPM starting from \eqref{rho0}\eqref{Omega0R} with $\rho_{\pm}=\pm 1$.
\end{thm}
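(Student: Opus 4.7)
The plan is to construct a subsolution $(\bar{\rho},\bar{v},\bar{m})$ whose mixing zone $\Mixzone(t)$ is a tubular neighborhood of the image of the unstable arcs $\{\partial_\alpha z_1^\circ(\alpha)\leq 0\}$, evolving smoothly in time, and then apply the convex integration/Tartar framework inside $\Mixzone$ to produce infinitely many mixing solutions to IPM with $\rho\in\{-1,+1\}$ that agree with the subsolution outside $\Mixzone$. Structurally this mirrors the bubble case of Theorem \ref{thm:main:1}, but the geometry of an open (possibly asymptotically flat) curve requires separate parametrizations and decay analysis.

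For the subsolution, I would first define a \emph{pseudo-interface} $z(t,\alpha)$ evolving from $z^\circ(\alpha)$ as follows. On the stable arcs ($\sigma>0$) the pseudo-interface propagates by the classical (graph) Muskat dynamics locally, so that $\bar{\rho}=\pm 1$ there and $\bar{v}$ is the corresponding Darcy velocity recovered by Biot--Savart; one uses the well-posedness theory in $H^6$ on the stable part (together with a cutoff away from the transition points) to get a short-time smooth evolution. On the unstable arcs I would open a mixing region bounded by two smoothly varying curves $z^\pm(t,\alpha)$ with $z^\pm(0,\alpha)=z^\circ(\alpha)$, whose thickness grows linearly in $t$ in the normal direction to $z^\circ$, and interpolate $\bar{\rho}$ linearly in the transverse variable between $-1$ and $+1$. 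The velocity $\bar{v}$ is then the divergence-free field produced by the Biot--Savart kernel from $\bar{\rho}$ (with appropriate decay for the asymptotically flat case), and $\bar{m}$ is chosen so that the relaxed system $\partial_t\bar{\rho}+\nabla\cdot\bar{m}=0$, $\nabla\cdot\bar{v}=0$, and the Darcy constraint hold, while $(\bar{v},\bar{\rho},\bar{m})$ lies strictly inside the $\Lambda$-convex hull $K^{\mathrm{co}}$ of the IPM constraint set at every point of $\Mixzone$.

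Having the subsolution, the second half is the standard convex integration scheme: set up a Banach space of perturbations supported in $\Mixzone$, show that the set of admissible solutions is a nonempty complete metric space, that the energy functional to be increased by plane waves is a Baire-1 map, and conclude that a residual set of perturbations yields exact solutions of IPM with infinitely many distinct elements. The transition from $\bar{\rho}\in(-1,1)$ to $\rho\in\{-1,+1\}$ uses the wave-cone direction for IPM identified in earlier convex integration works, inserted via localized plane-wave oscillations that vanish near $\partial\Mixzone$.

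The main obstacle, and the real novelty, is the construction of $z^\pm$ and the accompanying $\bar{\rho}$, $\bar{v}$, $\bar{m}$ with genuine \emph{localization}: the pseudo-interface on the stable arcs must satisfy a true Muskat-type parabolic equation with a forcing coming from the Biot--Savart contribution of the non-trivial $\bar{\rho}$ inside $\Mixzone$, and the two evolutions must match at the transition points $\{\partial_\alpha z_1^\circ=0\}$ where $\sigma$ changes sign. Ensuring that the relaxation condition $(\bar{v},\bar{\rho},\bar{m})\in\mathrm{int}\,K^{\mathrm{co}}$ holds uniformly in compact subsets of $\Mixzone$, while $\Mixzone$ itself degenerates to the unstable arc as $t\to 0$, forces a careful choice of the opening speed of $z^\pm$ relative to the regularity of $z^\circ$ (this is where $H^6$ enters) and a quantitative control of the normal velocity jump across the pseudo-interface on the stable side. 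Once this matching is set up, the asymptotically flat and periodic subcases are handled in parallel by working in suitable weighted Sobolev spaces adapted to the decay of $z^\circ-(\alpha,0)$.
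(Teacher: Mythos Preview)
Your outline correctly identifies the architecture (build a subsolution with a localized mixing zone, then apply the h-principle) and correctly names the central difficulty: matching the classical Muskat evolution on the stable arcs with the mixing-zone evolution on the unstable arcs across the transition points where $c(\alpha)\to 0$. However, the proposal contains no concrete mechanism for this matching, and the natural attempts you hint at do not work. The Biot--Savart velocity is nonlocal, so the pseudo-interface cannot be evolved by ``classical Muskat locally on the stable part with a cutoff'' and by a separate prescription on the unstable part; one must write a \emph{single} evolution equation for $z(t,\alpha)$ on the whole parameter domain. If one tries $\partial_t z=\psi_0 B+\psi_1 B^{1)}$ with $B$ the full averaged Muskat operator \eqref{B:unstable}, the energy estimates for $\psi_0 B$ pick up a factor $\partial_\alpha\log c(\alpha)$ from the cross-boundary terms $B_{a,b}$ with $a\neq b$, which blows up as $c\to 0$. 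This is the actual obstruction, and your proposal does not address it.

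The paper's resolution is to replace $B$ by the ``diagonal'' operator $E=B_{+,+}+B_{-,-}$, which coincides with $B$ where $c=0$ and admits clean parabolic energy estimates on $\mathrm{supp}\,\psi_0$ (no $1/c$ factor). The price is that $E\neq B$ on the mixing region, but one shows $E-B=O(t(c+|\partial_\alpha c|))$ via a residue/argument-principle computation (Lemma~\ref{lemma:residue}); this discrepancy is then absorbed not into the PDE but into the flexibility of the subsolution constraint \eqref{hull:2}, i.e.\ into the choice of $\gamma$ in $\bar{m}$. A further correction $\kappa=\partial_t(E-B)|_{t=0}$ and a time-dependent mean $h(t)$ are needed to push the error to $O(t^2 c)$ so that condition \eqref{cond:z:2} holds. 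Two smaller points: the paper takes $\bar\rho$ piecewise constant (F\"orster--Sz\'ekelyhidi ansatz), not the linear interpolation you propose, precisely because it simplifies the pseudo-interface equation; and the asymptotically flat case is handled by Cauchy principal values and the modified index identity \eqref{newindex}, not by weighted Sobolev spaces.
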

The definition of mixing solutions is by now classical and will be rigorously defined in Section \ref{sec:subsolution} where the reader is exposed to the convex integration framework.

\begin{rem} Theorem \ref{thm:main:2} is the first result proving the continuation of the evolution of IPM after the breakdown exhibited in \cite{CCFGL12,CCFG13}. 
\end{rem}

\begin{rem} As mentioned above, the h-principle applied to a coarse-grained solution, 
a subsolution, yields infinitely many weak solutions. This path goes in both directions as, by 
taking suitable averages of the solutions, the subsolution is essentially recovered \cite{CFM19}. Thus, the relevant macroscopic properties of the solutions are described by the subsolution. Our construction yields piecewise constant subsolutions as in \cite{ForsterSzekelyhidi18} and it is still open whether a continuous subsolution (similar to that in \cite{CCFpp}) might be built in the partially unstable regime.
 \end{rem}

\begin{rem}\label{rem:c}
As in \cite{Szekelyhidi12,CCFpp,ForsterSzekelyhidi18,NoisetteSzekelyhidi20}, our mixing zone grows linearly in time around an evolving pseudo-interface. However, in Theorems \ref{thm:main:1} and \ref{thm:main:2} the mixing region must be localized in a neighborhood of the unstable region. Furthermore, this approach reveals the admissible regime for the growth-rate $c(\alpha)$ of the mixing zone compatible with the relaxation of IPM. This is
\begin{equation}\label{c:regime:optimal}
\left| c(\alpha)+\frac{\sigma(\alpha)}{\sqrt{\sigma(\alpha)^2+\varpi(\alpha)^2}}\right|<1
\quad\textrm{on}\quad
c(\alpha)>0,
\end{equation}
which is characterized by the \textbf{Rayleigh-Taylor} function $\sigma:=(\rho_{+}-\rho_{-})\partial_{\alpha}z_1^\circ$ and the \textbf{vorticity} strength $\varpi:=-(\rho_{+}-\rho_{-})\partial_{\alpha}z_2^\circ$ along $z^\circ$ (cf.~Section \ref{sec:subsolution}). Observe that \eqref{c:regime:optimal} prevents the two fluids from mixing wherever the initial interface is stable ($\sigma(\alpha)>0$) and there is not vorticity ($\varpi(\alpha)=0$).
\end{rem}

\begin{figure}[h!]
	\centering
	\subfigure[A bubble type initial interface.]{\includegraphics[width=0.45\textwidth]{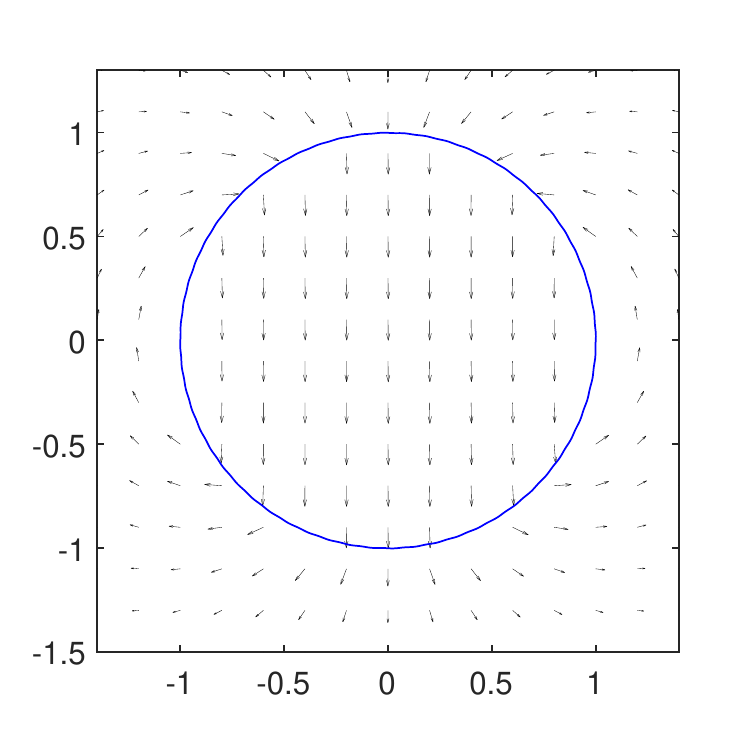}}\label{fig:initial:bubble}
	\subfigure[The localized mixing zone.]{\includegraphics[width=0.45\textwidth]{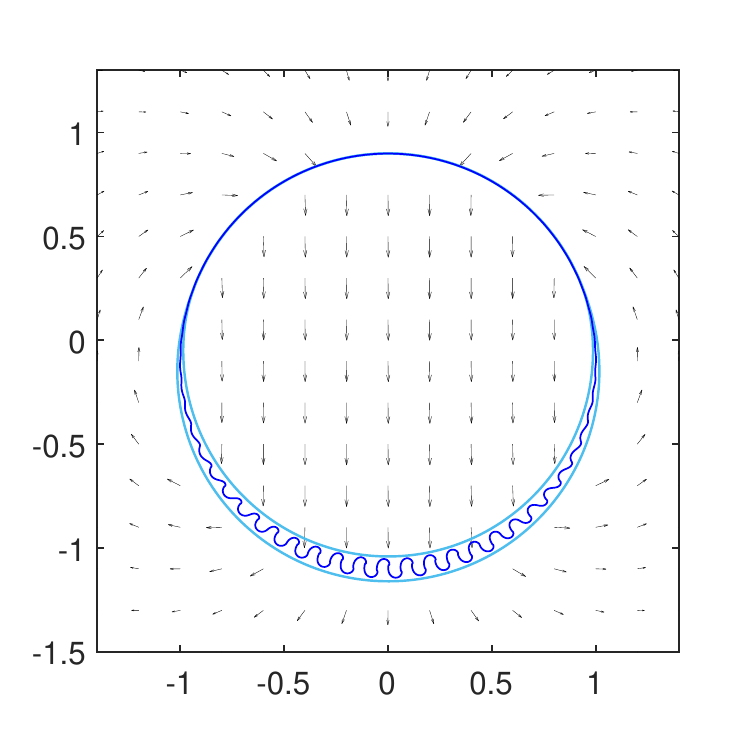}}
	\subfigure[A turned type initial interface.]{\includegraphics[width=0.45\textwidth]{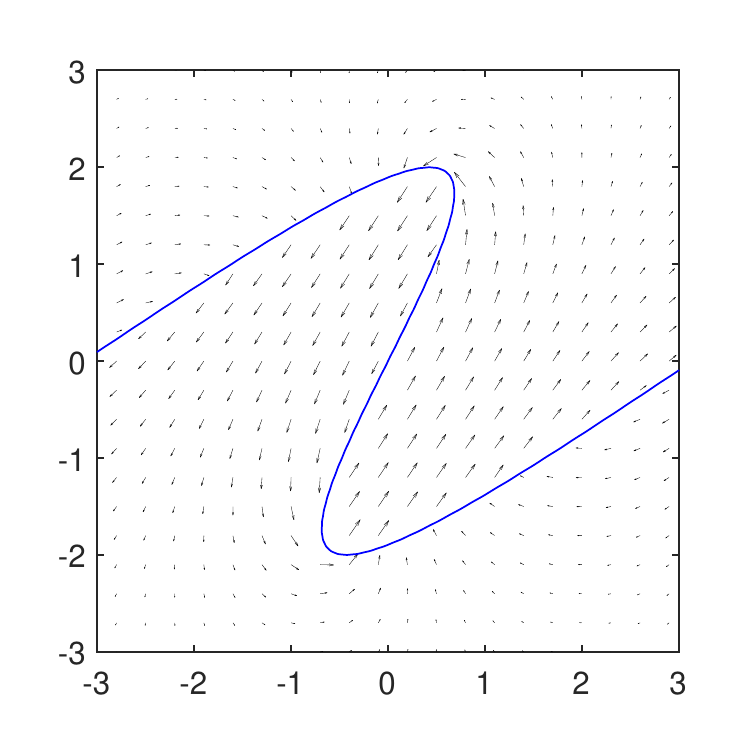}}\label{fig:initial:turned}
	\subfigure[The localized mixing zone.]{\includegraphics[width=0.45\textwidth]{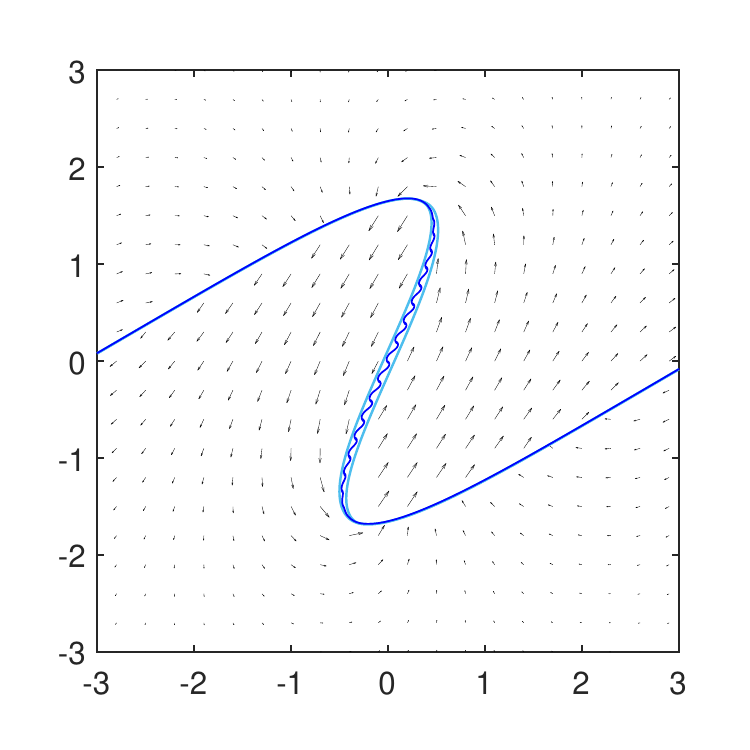}}
	\caption{(a)(c) The initial interface $z^\circ(\alpha)$ separating two fluids with different constant densities $\rho_{\pm}=\pm 1$ as in \eqref{Omega0}\eqref{Omega0R} respectively. (b)(d) At some $t>0$, the two boundaries of the non-mixing zones $z_{\pm}(t,\alpha)=z(t,\alpha)\mp tc(\alpha)\tau(\alpha)^\perp$ (light blue) for some pseudo-interface $z(t,\alpha)$ and growth-rate $c(\alpha)$, with $\tau(\alpha)=\frac{\partial_\alpha z^\circ(\alpha)}{|\partial_\alpha z^\circ(\alpha)|}$. Inside the mixing zone $\Mixzone(t)$ we plot the Rayleigh-Taylor curve $z_{\mathrm{per}}(t)$ (dark blue) which starts from a tiny perturbation of $z^\circ$ (via the vortex-blob method). In all the figures we have added the coarse-grained velocity field $\bar{v}(t,x)$ outside $\Mixzone$.}
	\label{fig:initial}
\end{figure}

The proof of the theorems rely on the pioneering adaptation of the convex integration method to Hydrodynamics by De Lellis and Sz\'ekelyhidi (\cite{DeLellisSzekelyhidi09,DeLellisSzekelyhidi10}). The method has turned out to be  very robust and flexible  and the research on it has been extremely intense in the last
 decade. We contempt ourselves with describing a few landmarks: It has successfully described several problems related to turbulence as the Onsager's conjecture (see e.g.~\cite{Isett18,BDSV19,DRSpp}),  the evolution of 
active scalars (\cite{CFG11,Shvydkoy11,Knott13,IsettVicol15,BSV19,HitruhinLindberg21}) and transport  equations (\cite{CGSW15,ModenaSzekelyhidi18,ModenaSzekelyhidi19,ModenaSattig20}), the
compressible Euler equations (see e.g.~\cite{ChiodaroliKreml14,CDK15,MarkfelderKlingenberg18,FKM20,AkramovWiedemann21,Markfelder21}), the Navier-Stokes equations (see e.g.~\cite{BuckmasterVicol19,BCV20,CDD18,BMSpp}) and Magnetohydrodynamics
(\cite{FaracoLindberg20,BBV20,FLS20}) (see also the surveys \cite{DeLellisSzekelyhidi12,DeLellisSzekelyhidi17,BuckmasterVicol19:0} and the references therein).

In the context of modeling instabilities in Fluid Dynamics via convex integration, the first result in the IPM context (see also \cite{CFG11}) was proved in \cite{Szekelyhidi12} where Sz\'ekelyhidi constructed infinitely many weak solutions to IPM starting from the unstable planar interface. Remarkably, the coarse-grained density (the subsolution in the convex integration jargon) agrees with the  Otto's Lagrangian relaxation of IPM (cf.~\cite{Otto99} and also \cite{Mengualpp}). In \cite{CCFpp} the first two authors and C\'ordoba constructed mixing solutions starting with a non-flat interface. In this work and all the subsequent ones, the mixing zone is described as an envelop of size $tc(\alpha)$ of a curve $z(t,\alpha)$ whose evolution is dictated by an operator which is an average of the classical Muskat operator. In \cite{CCFpp} the coarse-grained
density $\bar{\rho}$ is a continuous interpolation between the two fluids, which induces through an
adapted h-principle a degraded mixing property (\cite{CFM19}). As a by-product of this version of the  h-principle \cite{CFM19}, one shows that the subsolution is recovered from the solution by taking suitable averages. 
Remarkably, if one considers instead piecewise constant coarse-grained densities, the evolution of the pseudo-interface greatly simplifies  as was shown in \cite{ForsterSzekelyhidi18}  by F\"orster and Sz\'ekelyhidi.  See also \cite{ACF21,NoisetteSzekelyhidi20} for possible choices of the speed of opening of the mixing zone $c(\alpha)$. 

After the works in IPM, instabilities for the incompressible Euler equations have been successfully modeled with related  strategies, e.g.~the Rayleigh-Taylor (\cite{GKSpp,GebhardKolumbanpp}) and the Kelvin-Helmholtz (\cite{Szekelyhidi11,MengualSzekelyhidipp}) instabilities.\\

All the previous  works deal either with the fully stable
or fully  unstable regime of the various instabilities and hence new twists should be added
to the theory to deal with the partially unstable case. We finish the introduction with some comments on the natural obstructions and a non-technical description of the new view points
needed to address them. We believe that it is likely that the ideas from  this paper can be adapted and extended to consider different partially unstable scenarios in various problems
concerning instabilities in Fluid Dynamics.

Since it is to be expected that the classical Muskat problem is ill-posed in this partially unstable situation, we need to see a way to find compatibility between the parabolic analysis for the stable case and the relaxation approach for the unstable case. In particular, the mixing region needs to envelope  the unstable region. That is (recall $\sigma=(\rho_{+}-\rho_{-})\partial_{\alpha}z_1^\circ$) 
\begin{equation}\label{c:neig}
\{\sigma(\alpha)\leq 0\}\subset\{c(\alpha)>0\}.
\end{equation} 

As h-principles are by now standard \cite{Szekelyhidi12,CCFpp,CFM19}, the main issue
of the proof relies on building a mixing zone which admits a suitable subsolution $(\bar{\rho},\bar{v},\bar{m})$.
We will follow \cite{ForsterSzekelyhidi18} and declare $\bar{\rho}$ piecewise constant in the mixing
zone. In fact, for the sake of simplicity during the introduction we will assume the simplest case, $\bar{\rho}=0$ in $\Mixzone$.

At each time slice $0< t\leq T\ll 1$, the mixing zone is the open set in $\R^2$ given by
\begin{equation}\label{Mixzone}
\Mixzone(t):=\{z_{\lambda}(t,\alpha)\,:\,c(\alpha)>0,
\,\lambda\in (-1,1)\},
\end{equation}
parametrized by the map
\begin{equation}\label{Mixzone:z}
z_{\lambda}(t,\alpha)
:=z(t,\alpha)-\lambda t c(\alpha)\tau(\alpha)^\perp,
\end{equation}
where $\tau(\alpha)$ is an unitary vector field,
$c(\alpha)$ is the growth-rate of the mixing zone and $z(t,\alpha)$ is the pseudo-interface evolving from $z^\circ(\alpha)$, that we have to determine. 

In order to optimize the speed of opening of the mixing zone, it is convenient to take $\tau$ as the tangential vector field to $z^\circ$
\begin{equation}\label{t}
\tau(\alpha)
=\sgn(\rho_{+}-\rho_{-})
\frac{\partial_{\alpha}z^\circ(\alpha)}{|\partial_{\alpha}z^\circ(\alpha)|}.
\end{equation}

With our ansatz for $\bar{\rho}$ as in \cite{ForsterSzekelyhidi18} and this optimal choice for $\tau$, the admissible regime for $c(\alpha)$ compatible with the relaxation of IPM becomes
\begin{equation}\label{c:regime2}
\left| 2c(\alpha)+\frac{\sigma(\alpha)}{\sqrt{\sigma(\alpha)^2+\varpi(\alpha)^2}}\right|<1
\quad\textrm{on}\quad
c(\alpha)>0.
\end{equation}
We remark in passing that $2c(\alpha)$ above can be replaced by $\frac{2N}{2N-1}c(\alpha)$ for any $N\geq 1$ as in \cite{ForsterSzekelyhidi18,NoisetteSzekelyhidi20}, which yields \eqref{c:regime:optimal} as $N\to \infty$ (cf.~Section \ref{sec:Piecewise}).
Observe that this inequality requires $c(\alpha)=0$ if 
$\sigma(\alpha)=|(\sigma(\alpha),\varpi(\alpha))|$,
or equivalently $\partial_{\alpha}z_1^\circ(\alpha)=\sgn(\rho_{+}-\rho_{-})|\partial_{\alpha}z^\circ(\alpha)|$ (cf.~Remark \ref{rem:c}). 
Since in the regimes we are considering there are always such points, we are forced to treat the case where
there is no opening in some region, i.e.~$c(\alpha)=0$. 
An extra difficulty at this level
is that  our estimates need certain smoothness in $c$ (i.e.~the very definition of the velocity) which necessarily creates cusp singularities on $\Mixzone$. We deal with this problem by interpreting the mixing zone as a  superposition of regular domains (cf.~Figure \ref{fig:cusp} and Lemma \ref{lemma:parts}).

Next we turn to the 
coarse-grained velocity and the associated Muskat type operator. Here we start from \cite{ForsterSzekelyhidi18} as
we have chosen the same ansatz for the coarse-grained density and then explain the new idea. The F\"orster-Sz\'ekelyhidi's velocity is also an average of the classical Muskat velocity as in \cite{CCFpp} but only 
between the two boundaries of the non-mixing zones $z_\pm=z \mp t c \tau^\perp$. The associated Muskat type operator is (cf.~Section \ref{sec:Muskat})
 \begin{equation}\label{B:unstable}
B:=\frac{1}{2}\sum_{a=\pm}B_{a},
\quad\quad B_a:=\sum_{b=\pm}B_{a,b},
\end{equation}
where
\begin{equation}\label{Bab:unstable}
B_{a,b}(t,\alpha):=\frac{\rho_{+}-\rho_{-}}{4\pi}\int\left(\frac{1}{z_a(t,\alpha)-z_b(t,\beta)}\right)_1(\partial_{\alpha}z_{a}(t,\alpha)-\partial_{\alpha}z_{b}(t,\beta))\dif\beta.
\end{equation}
We remark in passing that, for open curves as in Theorem \ref{thm:main:2}, all these integrals are taken with the Cauchy's principal value at infinity. However, we will focus on the closed case until Section \ref{sec:generalizations} for clarity of exposition.

The evolution of $z$ is driven by the operator $B$.
On the one hand, as it is explained in the discussion after \eqref{c:regime2}, in the partially unstable case there is always a non-mixing region where we must solve a classical Muskat equation exactly
\begin{equation}\label{eq:z:stable}
\partial_tz=B
\quad\textrm{on}\quad
c(\alpha)=0.
\end{equation}
On the other hand, the flexibility of the notion of subsolution gives some space to define
the pseudo-interface (\cite{ForsterSzekelyhidi18,NoisetteSzekelyhidi20}). Namely, in 
the mixing region it is enough to solve \eqref{eq:z:stable} approximately 
$$
\partial_tz=B+\text{error}
\quad\textrm{on}\quad
c(\alpha)>0,
$$
where the error must be small in some sense that shall be specified in Sections \ref{sec:Muskat} and \ref{sec:conditionscz}. Due to the Rayleigh-Taylor instability, it is to be expected that the choice $\text{error}=0$ above yields an ill-posed equation as in the fully unstable regime. In spite of this, following another clever idea from \cite{ForsterSzekelyhidi18}, 
in the fully unstable regime it is possible to take
$\text{error}=B^{1)}-B+\text{error}$, where $B^{1)}$ denotes the first order expansion in time of $B$. This choice yields the following well-defined evolution for $z$
\begin{equation}\label{eq:z:unstable}
\partial_tz=B^{1)}+\text{error}
\quad\textrm{on}\quad
c(\alpha)>0.
\end{equation}
We remark that, if the error in \eqref{eq:z:unstable} was zero, then the equations \eqref{eq:z:stable} and \eqref{eq:z:unstable} do not match at $c(\alpha)=0$. 
In order to glue these equations we first introduce a partition of the unity $\{\psi_0,\psi_1\}$ which, as required in \eqref{c:neig}, allows also to open the mixing zone slightly inside the stable region, namely $\mathrm{supp}\,\psi_0\subset\{\partial_{\alpha}z_1^\circ(\alpha)>0\}$ and $\mathrm{supp}\,\psi_1=\mathrm{supp}\,c$. 
That is,  we bypass the gluing problem by writing
$$\partial_tz=\psi_0 B+ \psi_1 B^{1)}+\text{error}
\quad\textrm{on}\quad\T,$$
where the error is supported on $\{c(\alpha)>0\}.$
Yet the energy inequalities that we obtain for the operator $\psi_0B$ (or other modifications) yields a factor $1/c$ which blows up in the region where $c(\alpha)$ tends to zero. The  way out of this vicious circle  is to treat the interaction between separate boundaries as a perturbation.
In this way, one can write 
$B=E+\text{error}$ in such a way that $E$ yields good energy inequalities and the $\text{error}$ is small in the supremum norm and supported on $\{c(\alpha)>0\}$.
Thus, the perturbation can be absorbed in the relaxation even if its derivatives are badly behaving. Hence, we will solve
$$\partial_t z=\psi_0 E+\psi_1 E^{1)}+\text{error}
\quad\textrm{on}\quad\T,$$ 
for some error term supported on $\{c(\alpha)>0\}$, where $E^{1)}$ denotes the first order expansion in time of $E$. Essentially, $E=B_{+,+}+B_{-,-}$ as the factor $1/c$ comes from the terms with $a\neq b$ in \eqref{Bab:unstable}.\\

\textbf{Organization of the paper}. 
We start Section 2 by recalling briefly the
Classical and the Mixing Muskat problem. After this, 
we recall also the concepts of mixing solution and subsolution, as well as the h-principle in IPM. Then, we define our ansatz for the subsolution in terms of the mixing zone and derive the conditions for the growth-rate $c$ and the pseudo-interface $z$ under which such subsolution truly exists. The construction of a pair $(c,z)$ satisfying such requirements appears in Sections \ref{sec:c}-\ref{sec:existence}. Finally, we prove in Section \ref{sec:generalizations} the Theorems \ref{thm:main:1}, \ref{thm:main:2} and the optimal regime for $c$ given in \eqref{c:regime:optimal}.\\

\textbf{Notation}.

\begin{itemize}
	\item (Complex coordinates)
	It is convenient to identify the Euclidean space $\R^2$ with the complex plane $\C$ as usual, $z=(z_1,z_2)=z_1+iz_2$. Therefore, along the whole paper we will use complex coordinates
	and subindexes $1,2$ indicate real and imaginary parts for a complex number.
	
	Thus, $i\equiv (0,1)$ plays the roll both of the standard vertical vector and the imaginary unit.
	We will denote $z^*:=(z_1,-z_2)=z_1-iz_2$, $z^\perp:=(-z_2,z_1)=iz$ and $z\cdot w:=z_1w_1+z_2w_2=(zw^*)_1$. In this regard, we also have $\nabla=(\partial_1,\partial_2)=\partial_1+i\partial_2$, and so $\nabla^*=\partial_1-i\partial_2$ and $\nabla^\perp=i\nabla$.
	
	\item (Function spaces) We will consider the usual H\"older spaces $C^{k,\delta}$ with norm
	$$\|f\|_{C^{k,\delta}}:=\sup_{j\leq k}\|\partial^jf\|_{L^\infty}+|\partial^kf|_{C^\delta}
	\quad\textrm{with}\quad
	|g|_{C^\delta}:=\sup_{\alpha,\beta}\frac{|g(\alpha)-g(\alpha-\beta)|}{|\beta|^\delta},$$
	and also the Sobolev spaces $H^{k}$ with 
	$$\|f\|_{H^k}:=\left(\sum_{j=0}^k\|\partial^jf\|_{L^2}^2\right)^{\frac{1}{2}}.$$
	
	
	\item (Increments and different quotients) Given a function $f=f(\alpha)$ and another
	parameter $\beta$, it will be handy to use the expressions
	$f'=f(\alpha-\beta)$, $\delta_\beta f=f-f'$ and $\Delta_\beta=\frac{\delta_\beta}{\beta}$
	as in \cite{CCG18,CordobaLazar20}.
	\end{itemize}

\section{The mixing zone and the subsolution}\label{sec:subsolution}

\subsection{The Muskat Problem}\label{sec:Muskat}
The Muskat problem describes IPM under the assumption that there is a time-dependent
oriented curve $z(t,\alpha)$ separating $\R^2$ into two complementary open domains
\begin{equation}\label{Omega:Muskat}
\begin{split}
\Omega_{-}(t)\,&\equiv\textrm{ domain to the left side of }z(t),\\
\Omega_{+}(t)\,&\equiv\textrm{ domain to the right side of }z(t),
\end{split}
\end{equation}
each one occupied by a fluid with different constant densities $\rho_{-}$ and $\rho_{+}$ respectively.

The incompressibility condition \eqref{IPM:2} implies that $v=\nabla^\perp\psi$ for some stream function $\psi(t,x)$.
Hence, the Darcy's law \eqref{IPM:3} can be written in complex coordinates as
$\nabla(p+i\psi)=-i\rho,$
which yields the following Poisson equation ($\nabla^*\nabla=\Delta$)
$$\Delta(p+i\psi)=-i\nabla^*\rho.$$
In view of \eqref{Omega:Muskat}, the density jump along $z$ implies that
$$\nabla\rho=-(\rho_{+}-\rho_{-})\partial_{\alpha}z^\perp\delta_z,$$
in the sense of distributions.
Hence, $p$ and $\psi$ are recovered from the Poisson equation through the Newtonian potential
$$(p+i\psi)(t,x)
=\frac{\rho_{+}-\rho_{-}}{2\pi}\int\log|x-z(t,\beta)|\partial_{\alpha}z(t,\beta)^*\dif\beta,
\quad\quad
x\neq z(t,\beta).$$
Then, $p$ and $\psi$ are continuous but have discontinuous gradients along $z$, and indeed $\Delta(p+i\psi)=(\sigma+i\varpi)\delta_z$ where $\sigma$ $\equiv$ Rayleigh-Taylor and $\varpi$ $\equiv$ vorticity strength ($\Delta\psi=\nabla^\perp\cdot v=\omega$), which satisfy
\begin{equation}\label{sigmavarpi}
\sigma+i\varpi=(\rho_{+}-\rho_{-})\partial_{\alpha}z^*.
\end{equation}
The velocity
$v$ is recovered from the vorticity through the Biot-Savart law
\begin{equation}\label{v:stable}
\begin{split}
v(t,x)
&=\left(\frac{1}{2\pi i}\int\frac{\varpi(t,\beta)}{x-z(t,\beta)}\dif\beta\right)^*\\
&=-\frac{\rho_{+}-\rho_{-}}{2\pi }\int\left(\frac{1}{x-z(t,\beta)}\right)_1\partial_{\alpha}z(t,\beta)\dif\beta,
\quad\quad
x\neq z(t,\beta),
\end{split}
\end{equation}
where we have applied that $\varpi=-(\rho_{+}-\rho_{-})\partial_{\alpha}z_2$ and the Cauchy's argument principle  in the last equality
\begin{equation}\label{CAP:stable}
\left(\int\frac{\partial_{\alpha}z(t,\beta)}{x-z(t,\beta)}\dif\beta\right)_1=0,
\quad\quad
x\neq z(t,\beta).
\end{equation}
It is easy to see that $v$ is bounded, smooth outside $z$ but with tangential discontinuities along $z$. Its normal component is well-defined and satisfies
$$\lim_{\Omega_{\pm}(t)\ni x\to z(t,\alpha)}(v(t,x)-B(t,\alpha))\cdot\partial_{\alpha}z(t,\alpha)^\perp
=0,$$
where
\begin{equation}\label{B:stable}
B(t,\alpha):=\frac{\rho_{+}-\rho_{-}}{2\pi}\int\left(\frac{1}{z(t,\alpha)-z(t,\beta)}\right)_1(\partial_{\alpha}z(t,\alpha)-\partial_{\alpha}z(t,\beta))\dif\beta.
\end{equation}
Observe that the operator $B$ is obtained by adding a suitable tangential term to the velocity \eqref{v:stable} and then taking the limit $\Omega_{\pm}(t)\ni x\rightarrow z(t,\alpha)$. We refer to \eqref{B:stable} as the classical Muskat operator.
Let us remark that this operator \eqref{B:stable} coincides with \eqref{B:unstable} when $tc$ is identically zero. Since it only appears in this Subsection \ref{sec:Muskat} and the notation of the paper is heavy enough, we do not give it another name.

Finally, it is easy to check that the conservation of mass equation \eqref{IPM:1} is equivalent to find $z$ satisfying \begin{equation}\label{eq:z:stable:0}
(\partial_tz-B)\cdot\partial_{\alpha}z^\perp=0.
\end{equation}
Thus, the Muskat problem is equivalent to solve this Cauchy problem for the interface $z$ starting from $z^\circ$ given in \eqref{eq:z:stable}. 
We remark that because of \eqref{eq:z:stable:0}, one may add any tangential term to \eqref{eq:z:stable}. This only changes the parametrization and does not modify the geometric evolution of the curve. We refer to \eqref{eq:z:stable} as the Classical Muskat problem.

Assuming that the interface can be parametrized as a graph, $z(t,\alpha)=\alpha+if(t,\alpha)$ in complex coordinates,   
the equation \eqref{eq:z:stable} reads as
$$
\partial_tf=\frac{\rho_{+}-\rho_{-}}{2\pi}\mathrm{pv}\!\int_{\R}\left(\frac{1}{1+i\triangle_\beta f}\right)_1\partial_{\alpha}\triangle_\beta f\dif\beta,
$$
which can be linearized as $\partial_tf=(\rho_{+}-\rho_{-})(-\Delta)^{1/2}f$. In analogy with the heat equation, the fully stable regime ($\rho_+>\rho_-$) admits a parabolic analysis through energy estimates. 

However, the same strategy for the fully unstable regime ($\rho_+<\rho_-$) is not viable.
Despite this, mixing solutions to IPM starting from fully unstable Muskat inital data have been constructed in the last years through the convex integration method \cite{Szekelyhidi12,CCFpp,ForsterSzekelyhidi18,NoisetteSzekelyhidi20}. In these works, the mixing zone is given as in \eqref{Mixzone}\eqref{Mixzone:z}
but with $\tau=(-1,0)$ instead of \eqref{t}. More generally, we may consider
any unitary vector field $\tau(\alpha)$ satisfying 
$$(\rho_{+}-\rho_{-})\partial_\alpha z^\circ(\alpha)\cdot\tau(\alpha) > 0
\quad\textrm{on}\quad
c(\alpha)>0.$$
Thus, the triplet $(\tau,c,z)$ parametrizes the mixing zone, which does not exist when $tc(\alpha)=0$.
Here we follow \cite{ForsterSzekelyhidi18,NoisetteSzekelyhidi20}, where  $\bar{\rho}=0$ on $\Mixzone$. In this case, the coarse-grained velocity becomes
$$\bar{v}(t,x)=-\frac{\rho_{+}-\rho_{-}}{4\pi }\sum_{b=\pm}\int\left(\frac{1}{x-z_b(t,\beta)}\right)_1\partial_{\alpha}z_b(t,\beta)\dif\beta,
\quad\quad 
x\neq z_b(t,\beta),
$$
where $z_\pm(t,\alpha)=z(t,\alpha)\mp tc(\alpha)\tau(\alpha)^\perp$ are the two boundaries of the non-mixing zones.
The admissible regime for $c(\alpha)$ compatible with the relaxation of IPM is
\begin{equation}\label{c:regime}
\left| 2c(\alpha)+\frac{\sigma(\alpha)}{(\rho_{+}-\rho_{-})\partial_{\alpha}z^\circ(\alpha)\cdot\tau(\alpha)}\right|<1
\quad\textrm{on}\quad
c(\alpha)>0,
\end{equation}
which agrees with \cite{ForsterSzekelyhidi18,NoisetteSzekelyhidi20} as in this case $\rho_{\pm}=\mp 1$, $\partial_{\alpha}z^\circ=(1,\partial_{\alpha}f^\circ)$ and $\tau=(-1,0)$ (cf.~Rem.~\ref{rem:ctau}).
Observe that \eqref{c:regime} requires $c(\alpha)=0$  if $\partial_{\alpha}z_1^\circ(\alpha)=\partial_{\alpha}z^\circ(\alpha)\cdot\tau(\alpha)$. 
In view of \eqref{c:neig}, this prevents some choices for $\tau(\alpha)$ as for instance the one from \cite{CCFpp,ForsterSzekelyhidi18,CFM19}.
Thus, we really need to optimize by opening the mixing zone perpendicularly to the curve
(this is also the case in \cite{MengualSzekelyhidipp}).  This is why we have chosen $\tau$ as in \eqref{t}.
With such optimal  choice for $\tau$, \eqref{c:regime} reads as \eqref{c:regime2} (recall \eqref{sigmavarpi}).

Once $\tau(\alpha)$ and $c(\alpha)$ are fixed, we must determine the time-dependent pseudo-interface $z(t,\alpha)$.
Modulo technical details which will be explained in Section \ref{sec:conditionscz}, the existence of a relaxed momentum $\bar{m}(t,x)$ is reduced to find $z$ satisfying
\begin{equation}\label{eq:z:partially:0}
\int_{0}^{\alpha}\left((\partial_tz-B)\cdot\partial_{\alpha}z^\perp+tD\cdot\partial_{\alpha}(c\tau)\right)\dif\alpha'=o(t)c(\alpha),
\end{equation}
uniformly in $\alpha$ as $t\to 0$, where
\begin{equation}\label{def:D}
D(t,\alpha):=-\frac{1}{2}\sum_{a=\pm}aB_{a}-i(c\tau+\tfrac{1}{2}),
\end{equation}
with $\tau$, $c$, $B$ and $B_{a}$ given in \eqref{Mixzone}-\eqref{Bab:unstable}.
Observe that $D\cdot\partial_{\alpha}(c\tau)=0$ for $\partial_{\alpha}z=(1,\partial_{\alpha}f)$ and $\tau=(-1,0)$,
and thus it does not appear in \cite{ForsterSzekelyhidi18,NoisetteSzekelyhidi20}. Hence, the equation \eqref{eq:z:partially:0} generalizes both \eqref{eq:z:stable} and \eqref{eq:z:unstable}.
As we mentioned in the introduction, we cannot simply glue these evolution equations because they do not match at $c(\alpha)=0$.
In order to interpolate between the two regions, we introduce a partition of the unity $\{\psi_0,\psi_1\}$ subordinated to  $\{\partial_{\alpha}z_1^\circ(\alpha)>0\}$ and $\{c(\alpha)>0\}$ respectively. 
Then, we consider (cf.~\eqref{eq:z})
$$
\partial_tz=\psi_0E+\psi_1 E^{1)}+\text{error}.
$$
Here,  $E$ should be an extension of $B$ and good for energy inequalities, which justify
its name twice. 

In view of \eqref{eq:z:stable} and \eqref{eq:z:unstable}, 
one would be initially tempted to take $E=B$. However, the terms with $a\neq b$ in \eqref{B:unstable} introduce a factor $\partial_{\alpha}\log c(\alpha)$ in the energy estimates which we did not see how to compensate. Thus, we will declare
\begin{equation}\label{def:E2} 
E=\sum_{b=\pm}B_{b,b},
\end{equation}
which equals $B$ on $tc(\alpha)=0$ and only includes interaction of stable Muskat type.

The error term is localized on the mixing region with order $t$. This is
$$\text{error}=-(t\kappa+i(tD^{(0)}\cdot\partial_{\alpha}(c\tau)+h\psi_1)\partial_{\alpha}z^{\circ}),$$
where $D^{(0)}=D|_{t=0}$, $\kappa=\partial_t(E-B)|_{t=0}$ depends on the initial curvature and $h=O(t^2)$ is a time-dependent average. As a result,
$D^{(0)}$ and $\kappa$ only depends on $z^\circ$ while $h(t)$ depends on $z(t)$ but not on $\alpha$. 
This allows to treat the error as a harmless term in the energy estimates.

\subsection{Weak solutions, subsolutions and the mixing zone}\label{sec:Mixzone}

Let us start by recalling the rigorous definition of weak solutions, mixing solutions
and subsolutions in the IPM context. 

Given $T>0$ and $\rho^\circ$ as in \eqref{rho0}, a \textbf{weak solution} to IPM
$$(\rho,v)\in C([0,T];L_{w^*}^\infty(\R^2;[-1,1]\times\R^2))$$
satisfies that,  for every test function $\phi\in C_c^1(\R^3)$ with $\phi^\circ:=\phi|_{t=0}$ and $0<t\leq T$:
\begin{subequations}\label{IPM:weak}
	\begin{align}
	\int_0^t\int_{\R^2}\rho(\partial_t\phi+v\cdot\nabla\phi)\dif x\dif s
	&=\int_{\R^2}\rho(t)\phi(t)\dif x-\int_{\R^2}\rho^\circ\phi^\circ\dif x,\label{IPM:weak:1}\\
	\int_0^t\int_{\R^2}v\cdot\nabla\phi\dif x\dif s&=0,\label{IPM:weak:2}\\
	\int_0^t\int_{\R^2}(v+\rho i)\cdot\nabla^\perp\phi\dif x\dif s&=0.\label{IPM:weak:3}
	\end{align}
\end{subequations}
In addition, a weak solution is a \textbf{mixing solution} if, at each $0<t\leq T$, the space $\R^2$ is split into three complementary open domains, $\Omega_{+}(t)$, $\Omega_{-}(t)$ and $\Mixzone(t)$, satisfying that
$(\rho,v)$ is continuous on the non-mixing zones $\Omega_\pm$:
\begin{equation}\label{eq:non-mixing}
\rho=\pm 1\quad\textrm{on}\quad\Omega_{\pm},
\end{equation}
while it behaves wildly inside the mixing zone $\Mixzone$:
\begin{equation}\label{eq:mixing}
\int_{\Omega}(1-\rho^2)\dif x=0
<\int_{\Omega}(1-\rho)\dif x
\int_{\Omega}(1+\rho)\dif x,
\end{equation}
for every open $\emptyset\neq\Omega\subset\Mixzone(t)$.\\
Conversely, we say that $(\rho,v)$ is a \textbf{non-mixing solution} if $\Mixzone=\emptyset$.\\

In convex integration, a subsolution (a macroscopic solution) is defined in term of a conservation law and a relaxed constitutive relation, which is typically given by the $\Lambda$-convex hull. In the IPM context, the hull was computed in \cite{Szekelyhidi12} (see also \cite{Mengualpp} for related computations).

Given $T>0$ and $\rho^\circ$ as in \eqref{rho0}, a \textbf{subsolution} to IPM
$$(\bar{\rho},\bar{v},\bar{m})\in C([0,T];L_{w^*}^\infty(\R^2;[-1,1]\times\R^2\times\R^2))$$
satisfies that, for every test function $\phi\in C_c^1(\R^3)$ with $\phi^\circ:=\phi|_{t=0}$ and $0< t\leq T$:
\begin{subequations}
\label{RIPM}
\begin{align}
\int_0^t\int_{\R^2}(\bar{\rho}\partial_t\phi+\bar{m}\cdot\nabla\phi)\dif x\dif s
&=\int_{\R^2}\bar{\rho}(t)\phi(t)\dif x-\int_{\R^2}\rho^\circ\phi^\circ\dif x, \label{RIPM:1}\\
\int_0^t\int_{\R^2}\bar{v}\cdot\nabla\phi\dif x\dif s&=0,\label{RIPM:2}\\
\int_0^t\int_{\R^2}(\bar{v}+\bar{\rho} i)\cdot\nabla^\perp\phi\dif x\dif s&=0, \label{RIPM:3}
\end{align}
\end{subequations}
such that, at each $0<t\leq T$, the space $\R^2$ is split into three complementary open domains, $\Omega_{+}(t)$, $\Omega_{-}(t)$ and $\Mixzone(t)$ satisfying that
\begin{subequations}
\label{hull}
\begin{align}
\bar{\rho}=\pm 1,\quad\bar{m}=\bar{\rho}\bar{v}
&\quad\textrm{on}\quad\Omega_{\pm},\label{hull:1}\\
|2(\bar{m}-\bar{\rho}\bar{v})+(1-\bar{\rho}^2)i|<(1-\bar{\rho}^2)
&\quad\textrm{on}\quad\Mixzone.\label{hull:2}
\end{align}
\end{subequations}
In addition, it is required that
\begin{equation}\label{velocity:bdd}
\sup_{0\leq t\leq T}\|\bar{v}(t)\|_{L^\infty}<\infty.
\end{equation}

\begin{rem}
Notice that the pressure does not appear in \eqref{IPM:weak:3}\eqref{RIPM:3}. For completeness we will show in Lemma \ref{lemma:p} how $p\in C([0,T]\times \R^2)$ is recovered and its relation with $\bar{p}$. 
We are not aware of similar computations for the IPM pressure in the convex integration framework.
\end{rem}

\begin{thm}[H-principle in IPM]\label{thm:hprinciple}
Assume that there exists a subsolution $(\bar{\rho},\bar{v},\bar{m})$ to IPM starting from $\rho^\circ$, for some $T>0$, $\Omega_\pm$ and $\Mixzone$. Then, there exist infinitely many mixing solutions $(\rho,v)$ to IPM starting from $\rho^\circ$, for the same $T>0$, $\Omega_{\pm}$ and $\Mixzone$, and satisfying $(\rho,v)=(\bar{\rho},\bar{v})$ outside $\Mixzone$.
\end{thm}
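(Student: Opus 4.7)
My plan is to follow the now-standard Baire category / Tartar framework for convex integration as developed by De~Lellis--Sz\'ekelyhidi and specialized to IPM in \cite{Szekelyhidi12}, and later refined to accommodate mixing zones with moving boundaries in \cite{CCFpp,ForsterSzekelyhidi18,CFM19}. The output is to be obtained as a perturbation of the subsolution that is supported inside $\overline{\Mixzone}$ and that, after convex integration, saturates the hull \eqref{hull} pointwise in $\Mixzone$, turning \eqref{hull:2} into the extreme condition $\bar\rho=\pm 1$, $\bar m=\bar\rho\bar v$, i.e.~a genuine weak solution in the sense of \eqref{IPM:weak}.

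First I would set $(\rho,v,m)=(\bar\rho,\bar v,\bar m)+(\tilde\rho,\tilde v,\tilde m)$ with the perturbation compactly supported in $\Mixzone$, so that \eqref{RIPM:1}--\eqref{RIPM:3} reduce to a linear, constant-coefficient system for $(\tilde\rho,\tilde v,\tilde m)$ which is the one already analyzed in \cite{Szekelyhidi12}. The wave cone $\Lambda$ of plane-wave states for this linear system, together with the $\Lambda$-convex hull $K^\Lambda$ of the target set $K=\{\rho=\pm 1,\, m=\rho v\}$, were computed there: the interior of $K^\Lambda$ is precisely the open set defined by the strict inequality \eqref{hull:2}, and for every point in the interior the wave cone contains a segment of definite length connecting it to the extreme boundary $\partial K$. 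This is the geometric input that allows local oscillatory perturbations in any $(t,x)\in(0,T)\times\Mixzone(t)$.

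Second, I would define $X_0$ to be the family of smooth perturbations supported in $\overline{\Mixzone}$, exactly solving the linear system \eqref{RIPM:1}--\eqref{RIPM:3} with source $(\bar\rho,\bar v,\bar m)$, and landing strictly inside the interior of $K^\Lambda$; equivalently, $X_0$ consists of triples $(\rho,v,m)$ that agree with the subsolution outside $\Mixzone$, satisfy the relaxed IPM system on all of $[0,T]\times\R^2$ (the equations \eqref{RIPM:1}--\eqref{RIPM:3} are automatic from the subsolution property), and strictly obey \eqref{hull:2}. Let $X$ be the weak-$*$ $L^\infty$ closure of $X_0$ in the bounded set determined by $|\rho|\le 1$ and \eqref{velocity:bdd}; the weak-$*$ topology is metrizable on bounded sets, so $X$ is a complete metric space, hence Baire. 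The functionals $F_k(\rho):=\int_0^T\!\!\int_{B_k}(1-\rho^2)\dif x\dif t$, $B_k$ an exhausting family, are weak-$*$ upper semicontinuous on $X$, so their continuity points form a residual set on which $\rho^2=1$ a.e.~in $\Mixzone$.

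Third, the heart of the argument is the \emph{perturbation property}: for any element of $X_0$ with $\rho^2<1$ on a set of positive measure, there is $(\tilde\rho,\tilde v,\tilde m)\in X_0$ (added to the previous element) that is arbitrarily small in weak-$*$ but whose $L^2$ norm is bounded below by a constant depending on the $L^2$ defect $\int(1-\rho^2)$. This follows from the wave-cone analysis together with a potential representation that makes the perturbation an exact solution of the linear system with compact support in $\Mixzone$: pick a plane wave $w\cdot N(x,t)\,h(x\cdot\xi)$ in a $\Lambda$-direction $w$, express it via a differential operator acting on a scalar / vector potential, multiply by a smooth cutoff localized in a tiny ball inside $\Mixzone$, and send the frequency to infinity so that the commutator error is negligible in the weak-$*$ topology; this construction is done in detail in \cite{Szekelyhidi12} and adapted to moving mixing zones in \cite{CCFpp,CFM19}. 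The openness of $\Mixzone(t)$ at each $t$ and the openness of the inequality \eqref{hull:2} ensure the perturbed triple is again in $X_0$. Baire category then forces the residual set of $\rho$'s with $\rho^2=1$ a.e.~on $\Mixzone$ to be nonempty, and in fact dense and uncountable, yielding infinitely many mixing solutions; the mixing property \eqref{eq:mixing} is built in because $\rho$ oscillates between $\pm 1$ on arbitrarily small balls inside $\Mixzone$.

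The main obstacle, and the only nontrivial analytic step, is the perturbation property of the third paragraph: one must simultaneously keep the perturbation compactly supported in the moving domain $\Mixzone(t)$, solve the linearized system exactly (which requires the potential representation and the wave-cone computation of \cite{Szekelyhidi12}), and quantify the $L^2$ gain against the defect $\int(1-\rho^2)$. All the other steps---the Baire setup, the passage from extreme points to weak solutions, and the boundedness of $\bar v$ via \eqref{velocity:bdd}---are abstract and follow the template of \cite{DeLellisSzekelyhidi09,Szekelyhidi12,CCFpp,CFM19}.
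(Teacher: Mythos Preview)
Your sketch follows the convex integration template from \cite{Szekelyhidi12,CCFpp,CFM19}, which is exactly what the paper invokes: the paper does not reprove Theorem~\ref{thm:hprinciple} but simply cites these references together with \cite[Lemma~4.3]{Mengualpp}. So your approach is the intended one.

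One point deserves correction. You assert that the interior of $K^\Lambda$ is ``precisely the open set defined by the strict inequality \eqref{hull:2}''. This is not quite right, and it is the reason the hypothesis \eqref{velocity:bdd} appears in the definition of subsolution. As observed already in \cite{Szekelyhidi12}, the inequality \eqref{hull:2} alone characterizes only the hull relevant for $L^2$ solutions; to obtain solutions in $L^\infty$ (and hence in $C_tL^\infty_{w^*}$ as required here) one needs the additional inequalities of \cite[Prop.~2.4]{Szekelyhidi12}, which bound $|v|$ in terms of the other variables. The point of the condition \eqref{velocity:bdd} is precisely that, once $\bar v$ is uniformly bounded, these extra inequalities are automatically satisfied by the subsolution; this is checked in \cite[Lemma~4.3]{Mengualpp}. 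Without this observation your Baire space $X$ would not sit inside a bounded set of $L^\infty$, and the weak-$*$ metrizability and the perturbation property in $L^\infty$ would not be available. You mention \eqref{velocity:bdd} only in passing at the end; its actual role is to close the gap between \eqref{hull:2} and the full $L^\infty$ hull.

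A secondary remark: the statement asks for $(\rho,v)\in C([0,T];L^\infty_{w^*})$, not merely $L^\infty_{t,x}$. The upgrade from the $L^\infty_{t,x}$ h-principle of \cite{Szekelyhidi12} to the $C_tL^\infty_{w^*}$ version is carried out in \cite{CFM19}; your Baire setup should be run in that topology (a shifted grid / time-localized version of the perturbation lemma) rather than in plain weak-$*$ $L^\infty_{t,x}$.
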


The proof of this h-principle 
for the $L_{t,x}^\infty$ case can be found in  \cite{Szekelyhidi12}, and the generalization to $C_tL_{w^*}^\infty$ in \cite{CFM19}. As noticed in \cite{Szekelyhidi12}, the inequality \eqref{hull:2} only provides solutions in $L^2$. Remarkably, Sz\'ekelyhidi computed in \cite[Prop.~2.4]{Szekelyhidi12} the additional inequalities which yield solutions in $L^\infty$. In \cite[Lemma 4.3]{Mengualpp}  
it is checked that, if $\bar{v}$ is controlled as in \eqref{velocity:bdd}, then these additional inequalities are automatically satisfied.\\

By Theorem \ref{thm:hprinciple}, the construction of mixing solutions as stated in Theorems \ref{thm:main:1} and \ref{thm:main:2} is reduced to constructing
suitable subsolutions $(\bar{\rho},\bar{v},\bar{m})$ adapted to $\Mixzone$. 

As described in the intro, it follows from  \eqref{Mixzone}-\eqref{t} that the mixing zone is prescribed 
by the \textbf{growth-rate} $c$ and the \textbf{pseudo-interface} $z$. With this terminology,
 for bubble interfaces \eqref{Omega0} we define
\begin{equation}\label{Omegat}
\begin{split}
\Omega_-(t)&\equiv\textrm{ exterior domain of }z_-(t),\\
\Omega_+(t)&\equiv\textrm{ interior domain of }z_+(t),
\end{split}
\end{equation}
and for turned interfaces \eqref{Omega0R}
\begin{equation}\label{OmegatR}
\begin{split}
\Omega_-(t)&\equiv\textrm{ upper domain of }z_-(t),\\
\Omega_+(t)&\equiv\textrm{ lower domain of }z_+(t).
\end{split}
\end{equation}
Thus,  $\Omega_{+}(t)$, $\Omega_{-}(t)$ and $\Mixzone(t)$ are complementary open domains in $\R^2$. For each region $r=+,-,\mathrm{mix}$, we denote $\Omega_r:=\{(t,x)\,:\,x\in\Omega_{r}(t),\,0\leq t\leq T\}$.

\begin{rem}
For the sake of simplicity we will consider from now on the closed case \eqref{Omegat} and go back at Section \ref{sec:generalizations} with the open case \eqref{OmegatR}. Recall that for closed interfaces we have assumed that $z^\circ$ is clockwise oriented ($\circlearrowright$).
In addition, we may assume w.l.o.g.~that $z^\circ$ is the arc-length ($|\partial_{\alpha}z^\circ|=1$) parametrization, although $z(t)$ will not be it in general. Thus, we fix $\T=[-\ell_\circ/2,\ell_\circ/2]$ where $\ell_\circ:=\mathrm{length}(z^\circ)$. 
\end{rem}

For a general pair $(c,z)$ we construct in the next Section \ref{sec:subsol} a suitable triplet $(\bar{\rho},\bar{v},\bar{m})$ adapted to $\Mixzone$. After this, we derive in Section \ref{sec:conditionscz} conditions for $(c,z)$ under which this $(\bar{\rho},\bar{v},\bar{m})$ becomes a subsolution. Finally, we will prove in Sections \ref{sec:c}-\ref{sec:existence} the existence of a pair $(c,z)$ satisfying such requirements.\\ 

 
\textbf{Hipothesis on $(c,z)$}.
Apart from regularity assumptions, the curve needs to satisfy an angle and a chord-arc condition in a 
uniform manner. Prior to state the assumptions, 
let us introduce the angle constant of $z$ w.r.t.~$\tau$ 
\begin{equation}\label{A}
\mathcal{A}(z)
:=\inf\left\{\frac{\partial_{\alpha}z(\alpha)}{|\partial_{\alpha}z(\alpha)|}\cdot\tau(\alpha)
\,:\,\alpha\in\T\right\},
\end{equation}
and
recall the definition of a chord-arc curve. 

\begin{defi}\label{defi:CA}
A curve $z\in C(\T;\R^2)$ is \textbf{chord-arc} if
\begin{equation}\label{CA}
\mathcal{C}(z):=\sup\left\{\left|\frac{\beta}{z(\alpha)-z(\alpha-\beta)}\right|\,:\,\alpha,\beta\in\T\right\}<\infty.
\end{equation}
\end{defi}

Along the rest of this Section \ref{sec:subsolution} we will  assume the existence of $\delta,T>0$ such that
\begin{equation}\label{hyp:c}
c\in C^{1,\delta}(\T),
\quad\quad c\geq 0,
\end{equation}
and 
\begin{equation}\label{hyp:z}
z\in C^1([0,T];C^{1,\delta}(\T;\R^2)),
\quad\quad
z|_{t=0}=z^\circ\in C^{2,\delta}(\T;\R^2),
\end{equation}
satisfying the following equi-angle condition
\begin{equation}\label{Angle}
\mathcal{A}(c,z)
:=\inf\left\{\frac{\partial_{\alpha}z_\lambda(t,\alpha)}{|\partial_{\alpha}z_\lambda(t,\alpha)|}\cdot\tau(\alpha)
\,:\,\alpha\in\T,\,\lambda\in[-1,1],\,0\leq t\leq T\right\}>0,
\end{equation}
and the following equi-chord-arc condition
\begin{equation}\label{equiCA}
\mathcal{C}(c,z)
:=\sup\left\{\frac{\sqrt{\beta^2+((\lambda-\mu)tc(\alpha))^2}}{|z_{\lambda}(t,\alpha)-z_{\mu}(t,\alpha-\beta)|}
\,:\,\alpha,\beta\in\T,\,\lambda,\mu\in[-1,1],\,0\leq t\leq T\right\}<\infty,
\end{equation}
where we recall that $z_\lambda=z-\lambda tc\tau^\perp$ with $\tau=\partial_{\alpha}z^\circ$.\\

The condition \eqref{Angle} controls the angle between the family of curves $z_\lambda$ w.r.t.~$\tau$.
The equi-chord-arc condition \eqref{equiCA} bounds the singularity due to the denominator of the operators $B_{a,b}$ \eqref{Bab:unstable}, while the numerator justifies the regularity assumptions \eqref{hyp:c}\eqref{hyp:z}.
In addition, all they have the following useful consequence.

\begin{rem}\label{Rem:diffeomorphism}
The conditions \eqref{hyp:c}-\eqref{equiCA} imply that map $(\alpha,\lambda)\mapsto z_\lambda(t,\alpha)$ is a diffeomorphism from $\{c(\alpha)>0\}\times(-1,1)$ to $\Mixzone(t)$ with Jacobian $tc(\partial_{\alpha}z_\lambda\cdot\tau)>0$.
\end{rem} 

In Section \ref{sec:c} we will construct a suitable smooth growth-rate $c$. Once $c$ is fixed, we will still assume \eqref{hyp:z}-\eqref{equiCA} in Section \ref{sec:z}. Finally, we will construct a time-dependent pseudo-interface $z$ satisfying such conditions in Section \ref{sec:existence}.\\

We conclude this subsection by proving an auxiliary lemma which allows to integrate by parts, under certain conditions, on the domain with cusp singularities $\Mixzone$.

\begin{lemma}\label{lemma:parts}
Fix $0\leq t\leq T$. Let $f\in L^\infty(\R^2)$ satisfying that $f\in C^1$ with $\nabla\cdot f=0$ outside $\partial\Omega_{+}(t)\cup\partial\Omega_{-}(t)$ and with well-defined continuous limits
\begin{equation}\label{traces}
\begin{split}
f_{a}^{a}(\alpha)&:=\lim_{\Omega_a(t)\ni x\rightarrow z_a(t,\alpha)}f(x),\\
f_{a}^{\mathrm{mix}}(\alpha)&:=\lim_{\Mixzone(t)\ni x\rightarrow z_a(t,\alpha)}f(x),\quad\quad tc(\alpha)>0,
\end{split}
\end{equation}
whenever $z_a(t,\alpha)\in\partial\Omega_{r}(t)$ for $a=\pm$ and $r=+,-,\mathrm{mix}$. Then, for every $\phi\in C_c^1(\R^2)$,
\begin{align*}
\int_{\R^2}f\cdot\nabla\phi\dif x
&=\int_{tc(\alpha)=0}(f_+^+-f_-^-)\cdot\partial_{\alpha}z^\perp(\phi\circ z)\dif\alpha\\
&+\sum_{a=\pm}a\int_{tc(\alpha)>0}(f_a^a-f_a^{\mathrm{mix}})\cdot\partial_{\alpha}z_a^\perp(\phi\circ z_a)\dif\alpha.
\end{align*}
\end{lemma}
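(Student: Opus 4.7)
The plan is to apply the divergence theorem region-wise on the three open sets $\Omega_+(t)$, $\Omega_-(t)$ and $\Mixzone(t)$, using that $\nabla\cdot f=0$ inside each and that $f$ admits the continuous one-sided traces \eqref{traces}, and then sum the boundary contributions. The only obstruction is the cusp set
$$P(t):=z(t,\partial_\T\{c>0\})$$
at which $z_+=z_-=z$ and all three domains meet tangentially, so Gauss--Green has to be justified in the presence of cusps. This is the main obstacle of the proof.

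\textbf{Truncation near the cusps.} By the compactness of $\mathrm{supp}\,\phi$ and the regularity $c\in C^{1,\delta}$, only finitely many points of $P(t)$ lie in $\mathrm{supp}\,\phi$. Cover them by a union $U_\epsilon$ of balls of radius $\epsilon$ and pick $\eta_\epsilon\in C_c^\infty(\R^2)$ equal to $0$ on $U_{\epsilon/2}$, to $1$ outside $U_\epsilon$, with $|\nabla\eta_\epsilon|\leq C/\epsilon$. Split
$$\int_{\R^2} f\cdot\nabla\phi\,dx=\int f\cdot\nabla(\eta_\epsilon\phi)\,dx+\int f\cdot\nabla\bigl((1-\eta_\epsilon)\phi\bigr)\,dx.$$
Since $|U_\epsilon|=O(\epsilon^2)$ and $|\nabla\eta_\epsilon|=O(1/\epsilon)$, the second term is controlled by $\|f\|_{L^\infty}\bigl(\epsilon^2\|\nabla\phi\|_{L^\infty}+\epsilon\|\phi\|_{L^\infty}\bigr)=O(\epsilon)$ and vanishes as $\epsilon\to 0$. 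On $\mathrm{supp}(\eta_\epsilon\phi)$ each boundary $\partial\Omega_r$ is piecewise $C^1$, so $\nabla\cdot f=0$ in $\Omega_r$ and the divergence theorem give
$$\int_{\Omega_r}f\cdot\nabla(\eta_\epsilon\phi)\,dx=\int_{\partial\Omega_r}\eta_\epsilon\phi\,f_r\cdot n_r\,d\sigma,$$
where $f_r$ is the one-sided trace of $f$ from $\Omega_r$ and $n_r$ the exterior unit normal.

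\textbf{Normals, summation and limit.} Since $z^\circ$ is clockwise-oriented, walking along $z_a$ in the direction $\partial_\alpha z_a$ one has $\Omega_+$ on the right and $\Omega_-$ on the left, so the outward normal from $\Omega_+$ at $z_+$ is $\partial_\alpha z_+^\perp/|\partial_\alpha z_+|$, from $\Omega_-$ at $z_-$ is $-\partial_\alpha z_-^\perp/|\partial_\alpha z_-|$, while from $\Mixzone$ the two normals are the opposites; at $c(\alpha)=0$ one has $z_+=z_-=z$ and $\partial_\alpha z_\pm=\partial_\alpha z$ (since $c\geq 0$ and $c\in C^1$ forces $\partial_\alpha c=0$ there). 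With $d\sigma=|\partial_\alpha z_a|\,d\alpha$ the length weights cancel the denominators, and adding the three contributions produces, on the portion $tc(\alpha)=0$, the integrand $(\phi\circ z)(f_+^+-f_-^-)\cdot\partial_\alpha z^\perp$, and on $tc(\alpha)>0$ the integrand $\sum_{a=\pm}a(\phi\circ z_a)(f_a^a-f_a^{\mathrm{mix}})\cdot\partial_\alpha z_a^\perp$, both multiplied by $\eta_\epsilon$. These integrands are dominated by $\|f\|_{L^\infty}\|\phi\|_{L^\infty}|\partial_\alpha z_a|\in L^1(\T)$ under \eqref{hyp:z}, so dominated convergence with $\eta_\epsilon\to 1$ removes the cutoff and yields the claimed identity.
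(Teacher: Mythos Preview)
Your argument is correct in spirit and reaches the right conclusion, but it differs from the paper's proof in how the cusp singularities are handled. The paper treats $\Omega_\pm(t)$ directly as regular domains (so Gauss--Green applies with no modification there) and deals with the cusps of $\Mixzone(t)$ by an \emph{interior exhaustion in parameter space}: it sets
\[
\Mixzone^\varepsilon(t):=\{z_\lambda(t,\alpha):c(\alpha)>\varepsilon,\ \lambda\in(-1,1)\},
\]
applies the divergence theorem on each Lipschitz $\Mixzone^\varepsilon(t)$, and then observes that the lateral boundary pieces at $c(\alpha)=\varepsilon$ have length $O(t\varepsilon)$ per component, so their contribution vanishes as $\varepsilon\to 0$ since $f\in L^\infty$. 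You instead truncate the \emph{test function} in physical space by cutting off balls around the cusp points and treat all three regions uniformly. Both methods kill the cusp contribution by the same mechanism ($f$ bounded, small boundary/area), and both finish by dominated convergence.

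One small gap in your write-up: the claim that only finitely many cusp points meet $\mathrm{supp}\,\phi$ does not follow from $c\in C^{1,\delta}$ and compactness alone --- a nonnegative $C^{1,\delta}$ function can have $\partial\{c>0\}$ infinite. It is true for the specific $c$ built in Section~\ref{sec:c} (where $\{c>0\}$ has finitely many components), so your proof is fine in the paper's setting, but you should either say so or note that the paper's parameter-space exhaustion $\{c>\varepsilon\}$ avoids counting cusp points altogether, which is what makes that approach slightly cleaner here.
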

\begin{proof}
First of all we split the integral over $\R^2$ into $\Omega_{+}(t)$, $\Omega_{-}(t)$ and $\Mixzone(t)$.
On the one hand, by applying the Gauss divergence theorem on the regular domains $\Omega_{a}(t)$ for $a=\pm$, we get
$$\int_{\Omega_a(t)}f\cdot\nabla\phi\dif x
=a\int_{\T}f_a^a\cdot\partial_{\alpha}z_a^\perp(\phi\circ z_a)\dif\alpha,$$
where we have applied that
$\nabla\cdot f=0$ outside $\partial\Omega_{+}(t)\cup \partial\Omega_{-}(t)$
and that the normal vector to $\partial\Omega_{a}(t)$ pointing outward is $a\partial_{\alpha}z_a(t)^\perp$. This concludes the proof for $t=0$.
Now we pay special attention to the cusp singularities in $\Mixzone(t)$ for $0<t\leq T$.
For any $\varepsilon>0$ we define
$$\Mixzone^\varepsilon(t):=\{z_{\lambda}(t,\alpha)\,:\,c(\alpha)>\varepsilon,\,\lambda\in(-1,1)\},$$
which forms an exhaustion by Lipschitz domains of $\Mixzone(t)$. Hence, we can apply first the dominated convergence and then
the Gauss divergence theorem on $\Mixzone^\varepsilon(t)$ to obtain
$$
\int_{\Mixzone(t)}f\cdot \nabla \phi\dif x =\lim_{\varepsilon\to 0} \int_{\Mixzone^\varepsilon(t)}f\cdot \nabla \phi\dif x=\lim_{\varepsilon\to 0}\int_{\partial\Mixzone^\varepsilon(t)}(f^{\text{mix}}\cdot n^\varepsilon)\phi\dif\sigma,
$$
where $f^{\text{mix}}$ denotes the limit of $f$ on $\partial \Mixzone^\varepsilon(t)$ and $n^\varepsilon$ is the unit normal vector to $\partial \Mixzone^\varepsilon(t)$ pointing outward.
Since $f\in L^\infty$ it follows that the contribution of the boundary integral on $c(\alpha)=\varepsilon$, $\lambda\in(-1,1)$ is zero in the limit $\varepsilon\to 0$. Therefore, we deduce that
$$\int_{\Mixzone(t)}f\cdot \nabla \phi\dif x
=-\sum_{a=\pm}a\int_{tc(\alpha)>0}f_a^{\mathrm{mix}}\cdot\partial_{\alpha}z_a^\perp(\phi\circ z_a)\dif\alpha.$$
This concludes the proof.
\end{proof}

\subsection{The Subsolution}\label{sec:subsol}
\subsubsection{The density}\label{sec:density}
Following \cite{ForsterSzekelyhidi18,NoisetteSzekelyhidi20} we declare $\bar{\rho}=0$ on $\Mixzone$:
\begin{equation}\label{density}
\bar{\rho}(t,x):=\car{\Omega_{+}(t)}(x)-\car{\Omega_{-}(t)}(x),
\end{equation}
with $\Omega_{\pm}(t)$ given in \eqref{Omegat}.
As a result, $\partial_1\bar{\rho}(t)$ is a Dirac measure supported on $\partial\Omega_{+}(t)\cup\partial\Omega_{-}(t)$ with density $(\partial_{\alpha}z_a(t))_2$ on each $\partial\Omega_a(t)$ for $a=\pm$.

\begin{lemma}\label{rho:Dirac}
For every $\phi\in C_c^1(\R^2)$ and $0\leq t\leq T$,
$$\int_{\R^2}\bar{\rho}(t)\partial_1\phi\dif x
=-\sum_{a=\pm}\int_{\T}(\partial_{\alpha}z_a(t,\alpha))_2\phi(z_a(t,\alpha))\dif\alpha.$$
\end{lemma}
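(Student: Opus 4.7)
The plan is to integrate by parts region by region, exploiting that $\bar\rho$ vanishes on the cusp-singular mixing zone. Writing $\bar\rho(t) = \car{\Omega_{+}(t)} - \car{\Omega_{-}(t)}$ and splitting $\R^2 = \Omega_{+}(t)\cup\Omega_{-}(t)\cup\Mixzone(t)$, the first step is to reduce
\begin{equation*}
\int_{\R^2} \bar\rho(t)\,\partial_1\phi\dif x
= \int_{\Omega_{+}(t)} \partial_1\phi\dif x - \int_{\Omega_{-}(t)} \partial_1\phi\dif x,
\end{equation*}
so that $\Mixzone(t)$, where the cusp singularities live, drops out entirely.

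The key observation is that, although $\Mixzone(t)$ has cusps at the points where $c(\alpha)=0$, these cusps are internal to $\Mixzone(t)$ and do not lie on $\partial\Omega_{\pm}(t)$. Indeed, at such points the three curves $z_{+}$, $z$, $z_{-}$ coincide, but $\partial\Omega_{a}(t)$ is precisely the simple closed curve $z_{a}(t,\cdot)$, which is $C^{1,\delta}$ under the hypotheses \eqref{hyp:c}, \eqref{hyp:z}, and simple by the equi-chord-arc condition \eqref{equiCA}. Hence $\Omega_{\pm}(t)$ are Lipschitz domains and the Gauss divergence theorem applies directly to the vector field $(\phi,0)$. With $a\,\partial_{\alpha}z_{a}^\perp$ denoting the outward normal to $\partial\Omega_{a}(t)$, exactly as in the proof of Lemma \ref{lemma:parts}, one gets
\begin{equation*}
\int_{\Omega_{a}(t)}\partial_1\phi\dif x
= a\int_{\T} \phi(z_{a})(\partial_{\alpha}z_{a}^{\perp})_1\dif\alpha
= -a\int_{\T} (\partial_{\alpha}z_{a})_2\,\phi(z_{a})\dif\alpha.
\end{equation*}
Summing with the correct signs yields the claimed identity.

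I do not expect any genuine obstacle: the statement is essentially the distributional content of $\partial_1\bar\rho$, and the only delicate point is Lipschitz regularity of $\Omega_{\pm}(t)$ near the cusp points. Should a more cautious route be preferred, one can instead exhaust $\Omega_{\pm}(t)$ by the smooth subdomains $\{c(\alpha)>\varepsilon\}$ analogous to $\Mixzone^\varepsilon(t)$ introduced in the proof of Lemma \ref{lemma:parts}, apply Gauss there, and pass to the limit $\varepsilon\to 0$ using $\phi\in L^{\infty}$ exactly as in that argument; the boundary contribution over $\{c(\alpha)=\varepsilon\}$ then vanishes because its length shrinks to zero.
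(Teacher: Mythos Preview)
Your proof is correct and follows essentially the same route as the paper. The paper simply invokes Lemma~\ref{lemma:parts} applied to $f=(\bar\rho,0)$: inside that lemma the Gauss theorem is applied directly on the regular domains $\Omega_{\pm}(t)$ and the cusp region $\Mixzone(t)$ is handled by exhaustion, but since $\bar\rho\equiv 0$ on $\Mixzone(t)$ the latter contributes nothing---which is precisely your observation that the mixing zone drops out from the start.
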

\begin{proof}
It follows from Lemma \ref{lemma:parts} applied to $f=\bar{\rho}$ because, in this case, we have $1\cdot\partial_{\alpha}z_a^\perp=-(\partial_{\alpha}z_a)_2$, $\bar{\rho}_a^a=a$ and $\bar{\rho}_a^{\mathrm{mix}}=0$.
\end{proof}

\begin{figure}[h!]
	\centering
\includegraphics[width=\textwidth]{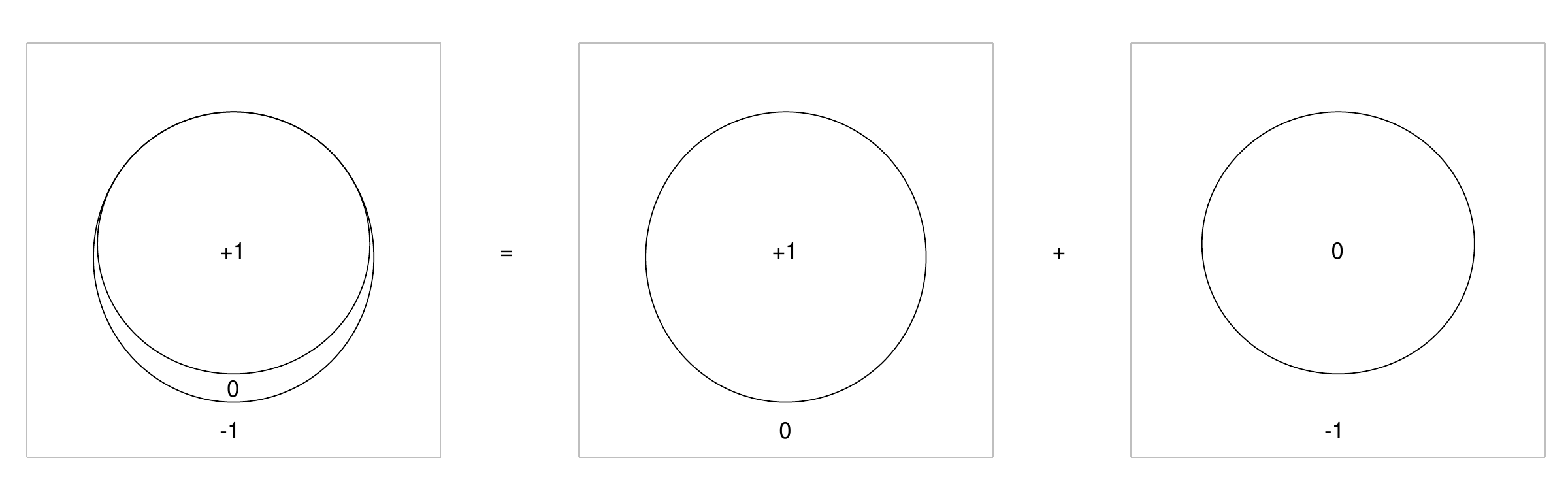}
	\caption{The macroscopic density $\bar{\rho}$ \eqref{density} can be decomposed as the sum of the contribution of $\rho_+$ on $\Omega_{+}\cup\Mixzone$ and $\rho_-$ on $\Omega_{-}\cup\Mixzone$. In this way, the cusp singularities in $\Mixzone$ can be understood as the superposition of regular domains.}
	\label{fig:cusp}
\end{figure}

\subsubsection{The velocity}\label{sec:velocity}

In view of Lemma \ref{rho:Dirac} we define $\bar{v}$ by means of the Biot-Savart law (cf.~\eqref{v:stable})
\begin{equation}\label{velocity}
\bar{v}(t,x)^*=-\frac{1}{2\pi i}\sum_{b=\pm}\int_{\T}\frac{(\partial_{\alpha}z_{b}(t,\beta))_2}{x-z_{b}(t,\beta)}\dif\beta,\quad\quad x\neq z_b(t,\beta).
\end{equation}
Observe that $\bar{v}$ is continuous (indeed $C_t^1C^\omega$) outside $\partial\Omega_+\cup\partial\Omega_-$.
 Proposition \ref{prop:v}  shows hat $\bar{v}$ satisfies the equations \eqref{RIPM:2}\eqref{RIPM:3}, the boundedness condition \eqref{velocity:bdd} and that has well-defined continuous limits \eqref{traces}. Proposition \ref{prop:vnormal} shows that the normal component of $\bar{v}$ on $\partial\Omega_+\cup\partial\Omega_-$ is well-defined and continuous. 
 Figure \ref{fig:cusp} explains why this is not surprising.

\begin{proposition}\label{prop:v}
Let $\bar{\rho}$ be as in \eqref{density}. The unique velocity satisfying \eqref{RIPM:2}\eqref{RIPM:3} which additionally vanishes as $|x|\rightarrow\infty$ is precisely \eqref{velocity}.
Moreover, $\bar{v}$ is uniformly bounded on $[0,T]\times\R^2$ and has well-defined continuous limits \eqref{traces}.
\end{proposition}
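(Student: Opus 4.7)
The plan is to recognise \eqref{RIPM:2}--\eqref{RIPM:3} as prescribing the divergence and the curl of $\bar v$ and then to analyse the resulting Biot--Savart integral by classical layer-potential theory on chord-arc curves.

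First I would observe that, testing against $\phi\in C_c^1(\R^3)$ and integrating by parts in $x$ at each fixed time, \eqref{RIPM:2} is equivalent to $\nabla\cdot \bar v(t)=0$ and \eqref{RIPM:3} is equivalent to $\nabla^\perp\cdot\bar v(t)=-\partial_1\bar\rho(t)$. By Lemma \ref{rho:Dirac}, the right-hand side of the curl equation is minus the single-layer measure
\begin{equation*}
\omega(t)=-\sum_{a=\pm}(\partial_\alpha z_a(t,\cdot))_2\,\delta_{z_a(t,\cdot)}
\end{equation*}
supported on $\partial\Omega_+(t)\cup\partial\Omega_-(t)$. Writing $\bar v=\nabla^\perp\psi$ with $\Delta\psi=\omega$ and taking $\psi$ to be the Newtonian potential of $\omega$, one computes $\nabla\log|x-y|$ in complex coordinates via $(\partial_1+i\partial_2)\log|x-y|=1/\overline{x-y}$ and arrives at precisely \eqref{velocity}. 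For uniqueness, any two velocities satisfying \eqref{RIPM:2}--\eqref{RIPM:3} and vanishing at infinity differ by a vector field on $\R^2$ that is simultaneously divergence-free and curl-free in the distributional sense; equivalently $\bar v_1-i\bar v_2$ is a holomorphic function on $\C$, bounded by elliptic regularity and vanishing at infinity, hence identically zero by Liouville's theorem.

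Next I would establish the uniform bound \eqref{velocity:bdd}. Each of the two summands in \eqref{velocity} is a Cauchy-type integral on the single closed curve $\beta\mapsto z_b(t,\beta)$ with Hölder density $(\partial_\alpha z_b)_2\in C^\delta(\T)$, thanks to \eqref{hyp:z}. Specialising \eqref{equiCA} to $\lambda=\mu=b$ bounds the chord-arc constant $\mathcal{C}(z_b(t,\cdot))$ uniformly in $t\in[0,T]$, and \eqref{Angle} bounds the angle constant in the same uniform fashion. Standard pointwise estimates for the Cauchy transform on a chord-arc curve with Hölder density then yield
\begin{equation*}
\sup_{0\le t\le T}\sup_{x\in\R^2}|\bar v(t,x)|\le C,
\end{equation*}
with $C$ depending only on $T$, $\mathcal{A}(c,z)$, $\mathcal{C}(c,z)$ and $\|z\|_{C_t^0C^{1,\delta}}$.

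Finally, for the continuous traces \eqref{traces} at $z_a(t,\alpha_0)$ with $tc(\alpha_0)>0$, the summand $b\neq a$ in \eqref{velocity} is smooth in a neighbourhood of $z_a(t,\alpha_0)$ because $z_b$ is separated from $z_a$ by a strictly positive distance there, and therefore extends continuously across $z_a$. The summand $b=a$ is the classical Cauchy integral on the chord-arc curve $z_a$, and the Plemelj--Sokhotski jump formula on chord-arc curves with Hölder density produces continuous one-sided limits equal to the principal value plus a $\pm\tfrac{1}{2}$-times-density jump. The equi-angle condition \eqref{Angle} guarantees that the two approaches are non-tangential with constants uniform in $\alpha$; the limits from $\Omega_a(t)$ and from $\Mixzone(t)$ are obtained on opposite sides of $z_a$ according to the definitions \eqref{Omegat}, so $f_a^a$ and $f_a^{\mathrm{mix}}$ exist and are continuous in $\alpha$ on $\{tc>0\}$. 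The main obstacle throughout is to keep all layer-potential constants uniform in $(t,\alpha)$ up to the cusp points $c(\alpha)=0$; this is precisely the reason why the uniform equi-chord-arc and equi-angle conditions \eqref{Angle}--\eqref{equiCA} were postulated, and once they are available the argument is routine.
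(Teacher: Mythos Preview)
Your overall strategy --- Biot--Savart from $\Delta\psi=\omega$, layer-potential/Plemelj theory on chord-arc curves, and Liouville for uniqueness --- is the same as the paper's, and the uniform-boundedness and uniqueness parts are fine. The paper proceeds in the reverse logical order (it \emph{defines} $\bar v$ by \eqref{velocity} and then verifies \eqref{RIPM:2}--\eqref{RIPM:3} via Lemma~\ref{lemma:parts}), but that is a matter of presentation.

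There is, however, a genuine gap in your trace argument. You only treat $tc(\alpha_0)>0$, arguing that the $b\neq a$ summand is smooth near $z_a(t,\alpha_0)$ because the two boundary curves are separated there. But the trace $\bar v_a^a$ in \eqref{traces} is required for \emph{all} $\alpha$, in particular at the cusp points $tc(\alpha_0)=0$ where $z_+(t,\alpha_0)=z_-(t,\alpha_0)$ and your separation argument collapses: both summands are singular at that point, and two simultaneous Plemelj jumps have to be tracked. Moreover, the proposition asserts that the traces are \emph{continuous} in $\alpha$, so even on $\{tc>0\}$ you need the limits to match up with the cusp values; invoking ``uniform constants up to the cusp'' is not the same as exhibiting a single formula valid across the cusp. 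The paper resolves this by rewriting $\bar v(t,x)^*$ as in \eqref{vnearGamma}: one subtracts off the index $\mathrm{Ind}_{z_b(t)}(x)$ (computed explicitly by the argument principle, cf.~\eqref{Ind}) and is left with an integral whose kernel carries the factor $\partial_\alpha z_b(t,\alpha)\cdot\partial_\alpha z_b(t,\beta)^\perp$, which vanishes at $\beta=\alpha$ and makes the integrand dominated-convergence--admissible uniformly in $\alpha$ via \eqref{equiCA}. This yields the explicit limits \eqref{v:limits}, manifestly continuous in $\alpha$, and these explicit formulas are in fact reused in the paper's Step~2 and in Proposition~\ref{prop:vnormal}. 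Your black-box Plemelj argument can be repaired along these lines, but as written it does not cover the cusp points.
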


\begin{proof}
\textit{Step 1.~$\bar{v}$ in \eqref{velocity} is uniformly bounded and has well-defined continuous limits \eqref{traces}}.
First of all notice that $\bar{v}$ is continuous (indeed $C_t^1C^\omega$) outside $\partial\Omega_{+}\cup\partial\Omega_{-}$. Moreover, for any $(t,x)\notin\partial\Omega_{+}\cup\partial\Omega_{-}$ it holds that
$$|\bar{v}(t,x)|
=\frac{1}{2\pi}\left|\sum_{b=\pm}\int_{\T}\left(\frac{1}{x-z_{b}(t,\beta)}-\frac{1}{x-z_b(t,0)}\right)(\partial_{\alpha}z_{b}(t,\beta))_2\dif\beta\right|
\leq\frac{\ell_\circ^2}{8\pi}\sum_{b=\pm}\frac{\|\partial_{\alpha}z_b\|_{C_tC^0}^2}{\mathrm{dist}(x,\partial\Omega_b(t))^2},$$
that is, $\bar{v}$ decays as $|x|^{-2}$ when $|x|\rightarrow\infty$.

Let us manipulate the expression \eqref{velocity} to help better understand the behavior of $\bar{v}$ near the boundary $\partial\Omega_+\cup\partial\Omega_-$.
We will use de index w.r.t.~$z_b(t)$ of points $x$ outside $\partial\Omega_+(t)\cup\partial\Omega_-(t)$:
$$\mathrm{Ind}_{z_b(t)}(x)
:=\frac{1}{2\pi i}\int_{z_b(t)}\frac{\dif z}{x-z}
=\frac{1}{2\pi i}\int_{\T}\frac{\partial_{\alpha}z_b(t,\beta)}{x-z_b(t,\beta)}\dif\beta.$$
Recall that $z(t)$ is clockwise oriented ($\circlearrowright$).
Hence, the Cauchy's argument principle yields
\begin{equation}\label{Ind}
\begin{split}
\mathrm{Ind}_{z_+(t)}(x)
&=\car{\Omega_{+}(t)}(x),\\
\mathrm{Ind}_{z_-(t)}(x)
&=1-\car{\Omega_{-}(t)}(x).
\end{split}
\end{equation}
In order to compute the limits \eqref{traces},
for any $x$ outside $\partial\Omega_+(t)\cup\partial\Omega_-(t)$ but close enough and $a=\pm$, we take 
$\alpha(x,a)\in\T$ minimizing $|x-z_a(t,\alpha)|$.
Then,   
a straightforward identity of complex numbers yields
\begin{align}
\bar{v}(t,x)^*
&=\sum_{b=\pm}\left(\frac{1}{2\pi i}\int_{\T}\frac{(\partial_{\alpha}z_{b}(t,\beta))_2}{x-z_{b}(t,\beta)}\left(\frac{\partial_{\alpha}z_b(t,\beta)}{(\partial_{\alpha}z_b(t,\beta))_2}\frac{(\partial_{\alpha}z_b(t,\alpha))_2}{\partial_{\alpha}z_b(t,\alpha)}-1\right)\dif\beta
-\frac{(\partial_{\alpha}z_b(t,\alpha))_2}{\partial_{\alpha}z_b(t,\alpha)}\mathrm{Ind}_{z_b(t)}(x)\right)\nonumber\\
&=\sum_{b=\pm}\left(\frac{1}{2\pi i}\frac{1}{\partial_{\alpha}z_{b}(t,\alpha)}
\int_{\T}\frac{\partial_{\alpha}z_{b}(t,\alpha)\cdot\partial_{\alpha}z_b(t,\beta)^\perp}{x-z_{b}(t,\beta)}\dif\beta
-\frac{(\partial_{\alpha}z_b(t,\alpha))_2}{\partial_{\alpha}z_b(t,\alpha)}\mathrm{Ind}_{z_b(t)}(x)\right).\label{vnearGamma}
\end{align}
On the one hand, the regularity conditions \eqref{hyp:c}-\eqref{equiCA} allow to apply the dominated convergence theorem on the first term in \eqref{vnearGamma} as $x\to z_a(t,\alpha)$.
On the other hand, the limits $\Omega_{r}(t)\ni x\rightarrow z_a(t,\alpha)$ in the second term in \eqref{vnearGamma} change depending on the region $r=+,-,\text{mix}$ where $x$ is coming from due to \eqref{Ind}. Therefore,
$\bar{v}$ has well-defined continuous limits \eqref{traces}
\begin{equation}\label{v:limits}
\begin{split}
\bar{v}_+^+
&=V_+-\frac{(\partial_{\alpha}z_-)_2}{(\partial_{\alpha}z_-)^*}-\frac{(\partial_{\alpha}z_+)_2}{(\partial_{\alpha}z_+)^*},\\
\bar{v}_+^{\mathrm{mix}}
&=V_+-\frac{(\partial_{\alpha}z_-)_2}{(\partial_{\alpha}z_-)^*},\\
\bar{v}_-^{\mathrm{mix}}
&=V_--\frac{(\partial_{\alpha}z_-)_2}{(\partial_{\alpha}z_-)^*},\\
\bar{v}_-^-
&=V_-,
\end{split}
\end{equation}
where 
$$
V_{a}:=\sum_{b=\pm}V_{a,b}
\quad\textrm{with}\quad
V_{a,b}(t,\alpha)^*
:=\frac{1}{2\pi i}\frac{1}{\partial_{\alpha}z_{b}(t,\alpha)}
\int_{\T}\frac{\partial_{\alpha}z_{b}(t,\alpha)\cdot\partial_{\alpha}z_b(t,\beta)^\perp}{z_{a}(t,\alpha)-z_{b}(t,\beta)}\dif\beta,$$
for $(t,\alpha)\in[0,T]\times\T$
and $a,b=\pm$.
Finally, it follows that
$\bar{v}$ is uniformly bounded on $[0,T]\times\R^2$.

\textit{Step 2.~$\bar{v}$ satisfies \eqref{RIPM:2}\eqref{RIPM:3}}. Observe that $\bar{v}(t)^*$ is holomorphic outside $\partial\Omega_+(t)\cup\partial\Omega_-(t)$ for all $0\leq t\leq T$. Thus, the Cauchy-Riemann equations imply that $\nabla\cdot\bar{v}(t)=\nabla^\perp\cdot\bar{v}(t)=0$ outside $\partial\Omega_+(t)\cup\partial\Omega_-(t)$.
Notice also that $z_{+}=z_{-}$ and $V_{+}=V_{-}$ on $tc(\alpha)=0$. In particular,
\begin{align*}
\bar{v}_+^+-\bar{v}_-^-
&=-\left(\frac{(\partial_{\alpha}z_+)_2}{(\partial_{\alpha}z_+)^*}+\frac{(\partial_{\alpha}z_-)_2}{(\partial_{\alpha}z_-)^*}\right)\quad\textrm{on}\quad tc(\alpha)=0,\\
\bar{v}_a^a-\bar{v}_a^\mathrm{mix}
&=-a\frac{(\partial_{\alpha}z_a)_2}{(\partial_{\alpha}z_a)^*}\hspace{2.25cm}\quad\textrm{on}\quad tc(\alpha)>0.
\end{align*}
Let $\phi\in C_c^1(\R^2)$ and $0<t\leq T$.
Then, by applying Lemma \ref{lemma:parts} to $f=\bar{v}$ and $\bar{v}^\perp$, we deduce that
\begin{align*}
\int_{\R^2}\bar{v}\cdot\nabla\phi\dif x
&=-\sum_{a=\pm}\int_{\T}\frac{(\partial_{\alpha}z_a)_2}{(\partial_{\alpha}z_a)^*}\cdot\partial_{\alpha}z_a^\perp(\phi\circ z_a)\dif\alpha=0,\\
\int_{\R^2}\bar{v}\cdot\nabla^\perp\phi\dif x
&=\sum_{a=\pm}\int_{\T}\frac{(\partial_{\alpha}z_a)_2}{(\partial_{\alpha}z_a)^*}\cdot\partial_{\alpha}z_a(\phi\circ z_a)\dif\alpha
=\sum_{a=\pm}\int_{\T}(\partial_{\alpha}z_a)_2(\phi\circ z_a)\dif\alpha.
\end{align*}
These identities jointly with Lemma \ref{rho:Dirac} imply that $\bar{v}$ satisfies \eqref{RIPM:2}\eqref{RIPM:3}.\\
\indent\textit{Step 3.~Uniqueness}. Finally, it is easy to check that any solution to \eqref{RIPM:2}\eqref{RIPM:3} has the form $\bar{u}=\bar{v}+f^*$ for some (time-dependent) entire function $f$. Thus, if $\bar{u}$ vanishes as $|x|\rightarrow\infty$ too, the Liouville's theorem implies that $f=0$.
\end{proof}

In the next lemma we deal with the normal component of $\bar{v}$ at the boundary of the mixing zone $\partial\Omega_+(t)\cup\partial\Omega_-(t)$. We will use the notation
from Lemma~\ref{lemma:parts} for the outer an inner limits and the operators $B,B_a$ defined in the intro \eqref{B:unstable}, \eqref{Bab:unstable}.

\begin{proposition}\label{prop:vnormal} Let $a=\pm$ and $r=+,-\mathrm{mix}$.
Then, it holds that
$$(\bar{v}_a^r-B_a)\cdot\partial_{\alpha}z_a^\perp=0,$$
on $[0,T]\times\T$. In particular,
$$(\bar{v}_a^r-B)\cdot\partial_{\alpha}z^\perp
=0
\quad\textrm{on}\quad
tc(\alpha)=0.
$$
\end{proposition}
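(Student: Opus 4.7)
The strategy is to split $\bar v=\bar w_++\bar w_-$ into the two boundary contributions in \eqref{velocity}, and correspondingly $B_a=B_{a,+}+B_{a,-}$, and then verify the normal-component identity by a direct computation. The first move is a reduction: the jumps in \eqref{v:limits} read $\bar v_a^a-\bar v_a^{\mathrm{mix}}=-a(\partial_\alpha z_a)_2/(\partial_\alpha z_a)^*$, and multiplying numerator and denominator by $\partial_\alpha z_a$ expresses this as the real multiple $a\frac{(\partial_\alpha z_a)_2}{|\partial_\alpha z_a|^2}\partial_\alpha z_a$ of $\partial_\alpha z_a$, hence a tangent vector to $z_a$. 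Therefore $\bar v_a^r\cdot\partial_\alpha z_a^\perp$ is the same for both $r\in\{a,\mathrm{mix}\}$, and it suffices to prove the identity in a side-independent (principal-value) sense.

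For the main computation, start from the Biot--Savart formula \eqref{velocity}. Using $\bar v\cdot\partial_\alpha z_a^\perp=-\mathrm{Im}(\bar v^*\partial_\alpha z_a)$ together with $\mathrm{Re}\bigl(\frac{w}{x-z_b(\beta)}\bigr)=\frac{w\cdot(x-z_b(\beta))}{|x-z_b(\beta)|^2}$, and then taking $x\to z_a(t,\alpha)$ with the diagonal $b=a$ contribution interpreted as a principal value, one obtains
\[
\bar v_a\cdot\partial_\alpha z_a^\perp
=-\frac{1}{2\pi}\sum_{b=\pm}\int\frac{(\partial_\alpha z_b(\beta))_2\,\partial_\alpha z_a(\alpha)\cdot(z_a(\alpha)-z_b(\beta))}{|z_a(\alpha)-z_b(\beta)|^2}\dif\beta.
\]
From \eqref{Bab:unstable} and $\partial_\alpha z_a\cdot\partial_\alpha z_a^\perp=0$ one also gets
\[
B_a\cdot\partial_\alpha z_a^\perp
=-\frac{1}{2\pi}\sum_{b=\pm}\int\frac{(z_a(\alpha)-z_b(\beta))_1\bigl(\partial_\alpha z_b(\beta)\cdot\partial_\alpha z_a(\alpha)^\perp\bigr)}{|z_a(\alpha)-z_b(\beta)|^2}\dif\beta.
\]
Expanding the two numerators in components shows, after the obvious cancellations, that they differ by $(\partial_\alpha z_a(\alpha))_2\,\bigl(\partial_\alpha z_b(\beta)\cdot(z_a(\alpha)-z_b(\beta))\bigr)$. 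Since $\partial_\beta\log|z_a(\alpha)-z_b(\beta)|=-\bigl(\partial_\alpha z_b(\beta)\cdot(z_a(\alpha)-z_b(\beta))\bigr)/|z_a(\alpha)-z_b(\beta)|^2$, combining these gives
\[
(\bar v_a^r-B_a)\cdot\partial_\alpha z_a^\perp
=\frac{(\partial_\alpha z_a(\alpha))_2}{2\pi}\sum_{b=\pm}\int\partial_\beta\log|z_a(\alpha)-z_b(\beta)|\dif\beta.
\]

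It remains to show each summand vanishes. For $b\neq a$ the two curves are disjoint by \eqref{equiCA}, the integrand is smooth and periodic on $\mathbb T$, and the fundamental theorem of calculus yields zero. For $b=a$ the principal value equals
\[
\lim_{\varepsilon\to 0}\log\frac{|z_a(\alpha)-z_a(\alpha-\varepsilon)|}{|z_a(\alpha)-z_a(\alpha+\varepsilon)|}=\log 1=0,
\]
by the Taylor expansion $z_a(\alpha)-z_a(\alpha\pm\varepsilon)=\mp\varepsilon\partial_\alpha z_a(\alpha)+O(\varepsilon^2)$ combined with the equi-chord-arc bound. This proves $(\bar v_a^r-B_a)\cdot\partial_\alpha z_a^\perp=0$. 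The ``in particular'' statement on $\{tc(\alpha)=0\}$ then follows because $z_\pm(t,\alpha)=z(t,\alpha)$ there, whence $B_{a,+}=B_{a,-}=\tfrac12 B$ (so $B_a=B$) and $\partial_\alpha z_a=\partial_\alpha z$. The main subtlety is the principal-value step for $b=a$; the rest is essentially bookkeeping in complex coordinates.
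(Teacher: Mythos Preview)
Your argument is correct and takes a genuinely different route from the paper. The paper first invokes the argument principle $(\mathrm{Ind}_{z_b(t)}(x))_2=0$ to rewrite $\bar v$ in the form $-\tfrac{1}{2\pi}\sum_b\int(\tfrac{1}{x-z_b})_1\partial_\alpha z_b\,\dif\beta$, then inserts $\partial_\alpha z_a(\alpha)$ (which is killed by $\partial_\alpha z_a^\perp$) and justifies the limit $x\to z_a(\alpha)$ by dominated convergence after the splitting $\partial_\alpha z_a=\partial_\alpha z_b-i(a-b)t\partial_\alpha(c\tau)$; this yields directly $B_a\cdot\partial_\alpha z_a^\perp$ and, as a by-product, the decomposition \eqref{Bab:split} used later. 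Your approach instead computes the difference $(\bar v_a-B_a)\cdot\partial_\alpha z_a^\perp$ explicitly and recognizes it as the integral of $\partial_\beta\log|z_a(\alpha)-z_b(\beta)|$, which vanishes by periodicity/principal value. This is a clean real-variable alternative that avoids the index computation; the paper's route is more complex-analytic and has the bonus of producing \eqref{Bab:split}.

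Two small points to tighten. First, your claim ``$B_{a,+}=B_{a,-}=\tfrac12 B$'' on $\{tc(\alpha)=0\}$ is not quite right when $t>0$: the integrals still run over the distinct curves $z_+(\cdot)$ and $z_-(\cdot)$. What is true (and sufficient) is that $z_a(\alpha)=z(\alpha)$ and $\partial_\alpha z_a(\alpha)=\partial_\alpha z(\alpha)$ there (since $c\geq 0$ forces $\partial_\alpha c(\alpha)=0$ at a zero), so $B_+=B_-$ and hence $B_a=B$. Second, ``for $b\neq a$ the two curves are disjoint by \eqref{equiCA}'' is only valid when $tc(\alpha)>0$; on $\{tc(\alpha)=0\}$ the point $z_a(\alpha)=z_b(\alpha)$ lies on $z_b$, but then your principal-value argument for $b=a$ applies verbatim (the only singularity is at $\beta=\alpha$, again by \eqref{equiCA}).
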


\begin{proof}
Let $x$ be outside $\partial\Omega_+(t)\cup\partial\Omega_-(t)$ but close enough.
Firstly, using that $(\mathrm{Ind}_{z_b(t)}(x))_2=0$, it follows that
the velocity \eqref{velocity} can be written as (cf.~\eqref{v:stable}\eqref{CAP:stable})
$$
\bar{v}(t,x)
=-\frac{1}{2\pi }\sum_{b=\pm}\int_{\T}\left(\frac{1}{x-z_{b}(t,\beta)}\right)_1\partial_{\alpha}z_{b}(t,\beta)\dif\beta.
$$
In particular,
$$\bar{v}(t,x)\cdot\partial_{\alpha}z_a(t,\alpha)^\perp
=
\frac{1}{2\pi }\sum_{b=\pm}\int_{\T}\left(\frac{1}{x-z_{b}(t,\beta)}\right)_1(\partial_{\alpha}z_{a}(t,\alpha)-\partial_{\alpha}z_{b}(t,\beta))\dif\beta\cdot\partial_{\alpha}z_a(t,\alpha)^\perp,$$
where we take $\alpha\in\T$ as in \eqref{vnearGamma}.
Thus, it remains to show that we can take the limit $x\to z_a(t,\alpha)$ in the r.h.s.~above.
By writing $z_a=z_b-i(a-b)tc\tau$, we split the above integrals into
$$\int_{\T}\left(\frac{1}{x-z_{b}(t,\beta)}\right)_1(\partial_{\alpha}z_{b}(t,\alpha)-\partial_{\alpha}z_{b}(t,\beta))\dif\beta
-i(a-b)t\partial_{\alpha}(c(\alpha)\tau(\alpha))\left(\int_{\T}\frac{\dif\beta}{x-z_{b}(t,\beta)}\right)_1.$$
Analogously to \eqref{vnearGamma}, the regularity conditions \eqref{hyp:c}-\eqref{equiCA} allow to apply the dominated convergence theorem on the first term as $x\to z_a(t,\alpha)$.
Notice that the second term vanishes for $(a-b)tc(\alpha)=0$. Otherwise, we can consider directly $x\to z_a(t,\alpha)$.
This implies the first statement. As a by-product, we have seen that $B_{a,b}$ is split into
\begin{equation}\label{Bab:split}
\begin{split}
B_{a,b}(t,\alpha)&=\frac{1}{2\pi}\int_{\T}\left(\frac{1}{z_{a}(t,\alpha)-z_{b}(t,\beta)}\right)_1(\partial_{\alpha}z_{b}(t,\alpha)-\partial_{\alpha}z_{b}(t,\beta))\dif\beta\\
&-i(a-b)t\partial_{\alpha}(c(\alpha)\tau(\alpha))\frac{1}{2\pi}\left(\int_{\T}\frac{\dif\beta}{z_{a}(t,\alpha)-z_{b}(t,\beta)}\right)_1.
\end{split}
\end{equation}
Finally, the second statement follows from the fact that $z_+=z_-$ and $B_+=B_-=B$ on $tc(\alpha)=0$.
\end{proof}

\subsubsection{The relaxed momentum}\label{sec:m}
In view of the inequality \eqref{hull}, it seems suitable to define $\bar{m}$ as
\begin{equation}\label{m}
\bar{m}:=\bar{\rho}\bar{v}-(1-\bar{\rho}^2)(\gamma+\tfrac{1}{2}i),
\end{equation}
in terms of some $\gamma\in C(\Mixzone;\R^2)$ to be determined (\cite{Szekelyhidi12,CCFpp,ForsterSzekelyhidi18, NoisetteSzekelyhidi20}). Moreover, for any $0<t\leq T$ we assume that $\gamma(t)\in C^1(\Mixzone(t))$ with continuous limits
$$\gamma_{a}(t,\alpha):=\lim_{\Mixzone(t)\ni x\rightarrow z_a(t,\alpha)}\gamma(t,x),$$
whenever $z_a(t,\alpha)\in\partial\Mixzone(t)$ for $a=\pm$ and $c(\alpha)>0$.

\subsection{Compatibility between the mixing zone and the subsolution}\label{sec:conditionscz}
In the next proposition we derive the conditions for $(c,z,\gamma)$ under which the corresponding $(\bar{\rho},\bar{v},\bar{m})$ given in \eqref{density},\eqref{velocity} and\eqref{m} becomes a subsolution.

\begin{proposition}\label{conditions:z,gamma}
Assume that $(c,z)$ satisfies \eqref{hyp:c}-\eqref{equiCA} for some $T>0$.	
The triplet $(\bar{\rho},\bar{v},\bar{m})$ given in \eqref{density},\eqref{velocity} and \eqref{m} defines a subsolution to IPM if and only if the triplet $(c,z,\gamma)$ satisfies the following equations on $\partial\Omega_a$ for $a=\pm$
\begin{subequations}\label{bcond}
\begin{align}
(\partial_tz-B)\cdot\partial_{\alpha}z^\perp&=0\quad\textrm{on}\quad tc(\alpha)=0,\label{bcond:1}\\
(\partial_tz_{a}-B_{a}-a(\gamma_{a}+\tfrac{1}{2}i))\cdot\partial_{\alpha}z_{a}^\perp&=0\quad\textrm{on}\quad tc(\alpha)>0,\label{bcond:2}
\end{align}
\end{subequations}
 and the following conditions on $\Mixzone$
\begin{subequations}\label{gamma}
\begin{align}
\nabla\cdot\gamma&=0,\label{divgamma}\\
|\gamma|&<\tfrac{1}{2}.\label{hullcond}
\end{align}
\end{subequations}
\end{proposition}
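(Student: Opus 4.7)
For $(\bar\rho,\bar v,\bar m)$ to be a subsolution I must verify the weak equations \eqref{RIPM:1}--\eqref{RIPM:3}, the hull constraints \eqref{hull:1}--\eqref{hull:2}, and the bound \eqref{velocity:bdd}. Proposition~\ref{prop:v} already yields \eqref{RIPM:2}, \eqref{RIPM:3} and \eqref{velocity:bdd}. On $\Omega_\pm$ we have $\bar\rho^2=1$, so \eqref{hull:1} is automatic from \eqref{m}. Inside $\Mixzone$, $\bar\rho=0$ gives $2(\bar m-\bar\rho\bar v)+(1-\bar\rho^2)i=-2\gamma$, so \eqref{hull:2} reduces to exactly \eqref{hullcond}. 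The content of the proposition therefore reduces to showing that the continuity equation \eqref{RIPM:1} is equivalent to \eqref{bcond:1}, \eqref{bcond:2} together with \eqref{divgamma}.

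\textbf{The two sides of \eqref{RIPM:1}.} Write $\int\bar\rho(t)\phi(t)-\int\rho^\circ\phi^\circ=\int_0^t\tfrac{d}{ds}\!\int\bar\rho\phi\,dx\,ds$, split $\int\bar\rho\phi=\int_{\Omega_+}\phi-\int_{\Omega_-}\phi$, and apply the Reynolds transport identity to each moving domain using the outward normal $a\partial_\alpha z_a^\perp$ identified in Lemma~\ref{lemma:parts}. This gives
\begin{equation*}
\frac{d}{ds}\!\int\bar\rho\phi\,dx=\int\bar\rho\,\partial_s\phi\,dx+\sum_{a=\pm}\int_\T\partial_s z_a\cdot\partial_\alpha z_a^\perp\,\phi(z_a)\,d\alpha.
\end{equation*}
For the spatial flux I apply Lemma~\ref{lemma:parts} to $f=\bar m$. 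In $\Omega_\pm$ we have $\bar m=\pm\bar v$ with $\nabla\cdot\bar v=0$ by Proposition~\ref{prop:v}; in $\Mixzone$, $\bar m=-(\gamma+\tfrac12 i)$, so vanishing divergence of $\bar m$ off the interfaces forces $\nabla\cdot\gamma=0$, which is \eqref{divgamma}. Necessity of this condition follows by testing \eqref{RIPM:1} against a $\phi$ supported in $\Mixzone$, where $\bar\rho=0$. With \eqref{divgamma} in hand, Lemma~\ref{lemma:parts} together with the limits \eqref{v:limits} from the proof of Proposition~\ref{prop:v}, the identity $\bar v_a^r\cdot\partial_\alpha z_a^\perp=B_a\cdot\partial_\alpha z_a^\perp$ for every region $r$ from Proposition~\ref{prop:vnormal}, and the fact $B_+=B_-=B$, $z_+=z_-=z$ on $\{tc=0\}$, yields
\begin{equation*}
\int\bar m\cdot\nabla\phi\,dx=\int_{tc=0}\!2B\cdot\partial_\alpha z^\perp\phi(z)\,d\alpha+\sum_{a=\pm}\int_{tc>0}\!\bigl(B_a+a(\gamma_a+\tfrac12 i)\bigr)\cdot\partial_\alpha z_a^\perp\,\phi(z_a)\,d\alpha.
\end{equation*}

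\textbf{Matching and conclusion.} Equating the two expressions and rearranging, \eqref{RIPM:1} becomes
\begin{equation*}
\int_0^t\!\Bigl\{\!\int_{tc=0}\!2(\partial_s z-B)\cdot\partial_\alpha z^\perp\,\phi(z)\,d\alpha+\sum_a\!\int_{tc>0}\!(\partial_s z_a-B_a-a(\gamma_a+\tfrac12 i))\cdot\partial_\alpha z_a^\perp\,\phi(z_a)\,d\alpha\Bigr\}\,ds=0
\end{equation*}
for every $\phi\in C_c^1(\R^3)$. By Remark~\ref{Rem:diffeomorphism} the three parametrizations $z$ on $\{c=0\}$ and $z_\pm$ on $\{c>0\}$ trace out pairwise disjoint pieces of $\partial\Omega_+\cup\partial\Omega_-$, and can be approached by $\phi$ supported in arbitrarily small spatial neighbourhoods of one piece. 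Varying $\phi$ to isolate each boundary integrand in turn extracts \eqref{bcond:1} and the two instances of \eqref{bcond:2} pointwise; the converse direction is obtained by reversing these identities. I expect the main obstacle to be the sign and region bookkeeping in the flux computation and, more subtly, justifying the pointwise recovery of \eqref{bcond} near the cusp locus $\partial\{c>0\}\subset\T$, where the two boundary curves $z_\pm$ pinch into a single curve $z$ and one must check that the selection of $\phi$ isolating a single integrand remains possible.
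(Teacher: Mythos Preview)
Your argument is correct and follows essentially the same route as the paper: reduce to \eqref{RIPM:1} via Proposition~\ref{prop:v} and the algebra of \eqref{m}, compute the time derivative of $\int\bar\rho\phi$ (your Reynolds transport is exactly the paper's ``integration by parts''), compute the spatial flux $\int\bar m\cdot\nabla\phi$ via Lemma~\ref{lemma:parts} and Proposition~\ref{prop:vnormal}, and match. The resulting integral identity and the conclusion are identical to the paper's. Your treatment is in fact slightly more explicit than the paper's on two points: the necessity of \eqref{divgamma} (by testing with $\phi$ supported in $\Mixzone$) and the localization argument needed to pass from the vanishing of the combined boundary integral to the pointwise conditions \eqref{bcond}; the paper simply writes ``This concludes the proof'' after displaying the identity. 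The cusp issue you flag is real but harmless here: away from $\partial\{c>0\}$ the three boundary pieces are disjoint smooth curves and standard bump functions isolate each integrand, while the conditions \eqref{bcond} are continuous in $\alpha$ (under \eqref{hyp:c}--\eqref{equiCA}) so their validity on the dense open complement of the cusp points extends by continuity.
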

\begin{proof}
Recall that $(\bar{\rho},\bar{v},\bar{m})$ already satisfies the equations \eqref{RIPM:2}\eqref{RIPM:3} and the conditions \eqref{hull:1} \eqref{velocity:bdd} (see Prop.~\ref{prop:v}). Moreover, it is clear that
$(\bar{\rho},\bar{v},\bar{m})$ satisfies the equation \eqref{RIPM:1} on $\Mixzone$ and the inequality \eqref{hull:2} if and only if \eqref{gamma} holds. Thus, it remains to analyze \eqref{RIPM:1} outside $\Mixzone$.\\
\indent Let $\phi\in C_c^1(\R^3)$ and $0< t\leq T$.
On the one hand, since $\bar{\rho}=0$ on $\Mixzone$ and $\bar{\rho}=\pm 1$ on the regular domains $\Omega_{\pm}(t)$, an integration by parts yields
\begin{align*}
&\int_{0}^t\int_{\R^2}\bar{\rho}\pa_t \phi\dif x\dif s-\int_{\R^2}\bar{\rho}(t)\phi(t)\dif x+\int_{\R^2}\rho^\circ\phi^\circ\dif x
\\&= -2\,\int_0^t \int_{c(\alpha)=0}\partial_tz\cdot\partial_{\alpha}z^\perp(\phi\circ Z)\dif\alpha\dif s
-\sum_{a=\pm}\,\int_0^t \int_{c(\alpha)>0}\partial_tz_{a}\cdot\partial_{\alpha}z_{a}^\perp(\phi\circ Z_a)\dif\alpha\dif s,
\end{align*}
where $Z(t,\alpha):=(t,z(t,\alpha))$ and $Z_a(t,\alpha):=(t,z_a(t,\alpha))$.
On the other hand, notice \eqref{m} reads as
$$\bar{m}(t,x)
=\left\lbrace\begin{array}{rl}
\pm\bar{v}(t,x),& x\in\Omega_{\pm}(t),\\[0.1cm]
-(\gamma(t,x)+\tfrac{1}{2}i), & x\in\Mixzone(t).
\end{array}
\right.$$
In particular, $\bar{m}$ satisfies the assumptions in Lemma \ref{lemma:parts}. Therefore, it follows that
\begin{align*}
\int_{\R^2}\bar{m}\cdot\nabla\phi\dif x
&=2\int_{c(\alpha)=0}B\cdot\partial_{\alpha}z^\perp(\phi\circ Z)\dif\alpha\\
&+\sum_{a=\pm}\int_{c(\alpha)>0}B_a\cdot\partial_{\alpha}z_a^\perp(\phi\circ Z_a)\dif\alpha\\
&+\sum_{a=\pm}a\int_{c(\alpha)>0}(\gamma_a+\tfrac{1}{2}i)\cdot\partial_{\alpha}z_a^\perp(\phi\circ Z_a)\dif\alpha,
\end{align*}
where we have applied Proposition \ref{prop:vnormal} in the first two lines above. 
In summary, we have seen that
\begin{align*}
&\int_{0}^t\int_{\R^2}(\bar{\rho}\pa_t \phi+\bar{m}\cdot\nabla\phi)\dif x\dif s-\int_{\R^2}\bar{\rho}(t)\phi(t)\dif x+\int_{\R^2}\rho^\circ\phi^\circ\dif x\\
&=-2\int_0^t \int_{c(\alpha)=0}(\partial_tz-B)\cdot\partial_{\alpha}z^\perp(\phi\circ Z)\dif\alpha\dif s\\
&-\sum_{a=\pm}\int_0^t \int_{c(\alpha)>0}((\partial_tz_{a}-B_a-a(\gamma_{a}+\tfrac{1}{2}i))\cdot\partial_{\alpha}z_{a}^\perp)(\phi\circ Z_a)\dif\alpha\dif s.
\end{align*}
This concludes the proof.
\end{proof}

We conclude this section by showing that we can construct $\gamma(t,x)$ satisfying the requirements in Proposition \ref{conditions:z,gamma} provided that $(c,z)$ satisfies certain conditions. 
Observe that  $\{c(\alpha)>0\}$ is open and thus a (countable) union of disjoint intervals $(\alpha_1,\alpha_2)$. Recall the definition of $B$ and $D$ from \eqref{B:unstable} and \eqref{def:D}.

\begin{lemma}\label{conditions:z} Assume that $(c,z)$ satisfies \eqref{hyp:c}-\eqref{equiCA} for some $T>0$.  Assume further that the following conditions hold uniformly on $\{c(\alpha)>0\}$
\begin{equation}\label{c:hull}
|2c(\alpha)+\partial_{\alpha}z^\circ_1(\alpha)|<1,
\end{equation}
and
\begin{subequations}\label{cond:z}
\begin{align}
\partial_t z-B_a&=o(1),\label{cond:z:1}\\
\frac{1}{tc(\alpha)}\int_{\alpha_1}^{\alpha}((\partial_tz-B)\cdot\partial_{\alpha}z^\perp+tD\cdot\partial_{\alpha}(c\tau))\dif\alpha'&=o(1),\label{cond:z:2}
\end{align}
\end{subequations}
as $t\rightarrow 0$, for $a=\pm$ and $\alpha\in (\alpha_1,\alpha_2)$ connected component of $\{c(\alpha)>0\}$.
Then, there exists $0<T'\leq T$ and $\gamma(t,\alpha)$
satisfying \eqref{bcond:2}-\eqref{gamma}  as long as $0<t\leq T'$.
\end{lemma}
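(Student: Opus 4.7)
The strategy is to exhibit $\gamma$ via a streamfunction ansatz, componentwise on $\Mixzone$. Since $\{c>0\}$ is a countable union of open intervals $(\alpha_1,\alpha_2)$ and $\Mixzone(t)$ is parametrized diffeomorphically by $(\alpha,\lambda)\mapsto z_\lambda(t,\alpha)$ (Remark~\ref{Rem:diffeomorphism}), I work in $(\alpha,\lambda)$-coordinates on each component. Writing $\gamma=\nabla^\perp\eta$ for some $\eta\in C^1(\Mixzone(t))$ immediately gives $\nabla\cdot\gamma=0$, so \eqref{divgamma} is for free; it remains to fix the traces of $\eta$ so that \eqref{bcond:2} holds, and to extend $\eta$ inside so that \eqref{hullcond} holds.

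Set $H_a(t,\alpha):=\eta(t,z_a(t,\alpha))$. Using $\gamma_a\cdot\partial_\alpha z_a^\perp=\nabla^\perp\eta\cdot\partial_\alpha z_a^\perp=\nabla\eta\cdot\partial_\alpha z_a=\partial_\alpha H_a$ and $i\cdot\partial_\alpha z_a^\perp=(\partial_\alpha z_a)_1$, condition \eqref{bcond:2} turns into the first-order ODE
\[
\partial_\alpha H_a = a\,(\partial_tz_a-B_a)\cdot\partial_\alpha z_a^\perp - \tfrac{1}{2}(\partial_\alpha z_a)_1, \qquad a=\pm,\ \alpha\in(\alpha_1,\alpha_2),
\]
which I integrate with $H_\pm(\alpha_1):=0$. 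At the right cusp $\alpha_2$ (where $z_+=z_-$ because $c(\alpha_2)=0$), single-valuedness of $\eta$ requires $H_+(\alpha_2)=H_-(\alpha_2)$. Expanding $\partial_tz_\pm=\partial_tz\mp c\tau^\perp$, $\partial_\alpha z_\pm=\partial_\alpha z\mp t\partial_\alpha(c\tau^\perp)$ and using $\tfrac{1}{2}(B_+-B_-)=-D-c\tau^\perp-\tfrac{i}{2}$ (from \eqref{def:D}), together with the exact cancellation of the term $t(\partial_\alpha(c\tau))_2$ coming from the $\delta A\,\delta v$ correction and $\tfrac{1}{2}(\partial_\alpha z_+-\partial_\alpha z_-)_1$, this matching reduces to
\[
\int_{\alpha_1}^{\alpha_2}\bigl[(\partial_tz-B)\cdot\partial_\alpha z^\perp + tD\cdot\partial_\alpha(c\tau)\bigr]\,d\alpha = 0,
\]
which is forced by \eqref{cond:z:2} as $\alpha\uparrow\alpha_2$ (since $c(\alpha_2)=0$). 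I then extend $\eta$ inside $\Mixzone$ via linear interpolation in $\lambda$,
\[
H(t,\alpha,\lambda) := A(t,\alpha)\,\lambda + B(t,\alpha), \qquad A:=\tfrac{1}{2}(H_+-H_-),\ \ B:=\tfrac{1}{2}(H_++H_-),
\]
producing a smooth $\gamma=\nabla^\perp\eta$ satisfying \eqref{divgamma} and \eqref{bcond:2}.

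The core of the proof, and the main obstacle, is verifying \eqref{hullcond} uniformly on $\Mixzone$, particularly near the cusps where the parametrization degenerates ($c\to 0$) and derivatives like $\partial_\alpha\log c$ become singular. A chain-rule computation in the orthonormal frame $\{\tau,\tau^\perp\}$, using $\partial_\alpha z_\lambda=\tau+O(t)$ and $\partial_\lambda z_\lambda=-tc\tau^\perp$, yields
\[
\gamma\cdot\tau = \frac{A(t,\alpha)}{tc(\alpha)} + O(t), \qquad \gamma\cdot\tau^\perp = A'(t,\alpha)\lambda + B'(t,\alpha) + O(t),
\]
where the $O(t)$ remainders (notably the a priori singular correction $\lambda Ac'/c$) are controlled by \eqref{equiCA}, \eqref{Angle}, \eqref{hyp:c}, \eqref{hyp:z} combined with the bound $A=o(tc)$. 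Passing to the limit: \eqref{cond:z:2} gives $A/(tc)\to 0$; \eqref{cond:z:1} gives $A'\to 0$ (via $\partial_tz-B=\tfrac{1}{2}[(\partial_tz-B_+)+(\partial_tz-B_-)]=o(1)$); and the explicit identities $\delta A|_{t=0}=-c\tau^\perp$, $\bar v|_{t=0}=\tau^\perp$ give $B'\to -\bigl(c+\tfrac{1}{2}\partial_\alpha z_1^\circ\bigr)$. Therefore
\[
|\gamma|^2 \;\xrightarrow[t\to 0]{}\; \bigl(c+\tfrac{1}{2}\partial_\alpha z_1^\circ\bigr)^{\!2} = \tfrac{1}{4}\bigl(2c+\partial_\alpha z_1^\circ\bigr)^2 < \tfrac{1}{4}
\]
strictly and uniformly in $(\alpha,\lambda)$, by \eqref{c:hull}. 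Continuity in $t$ yields $0<T'\leq T$ such that $|\gamma|<\tfrac{1}{2}$ pointwise on $\Mixzone(t)$ for every $0<t\leq T'$, establishing \eqref{hullcond}. The decisive point is that the quantitative decay $A=o(tc)$ prescribed by \eqref{cond:z:2} precisely compensates the denominator $1/c$ appearing in $\gamma\cdot\tau$; this is the design constraint that renders the relaxation compatible with the degenerating mixing geometry.
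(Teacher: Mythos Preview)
Your argument is correct and is essentially the paper's proof. Your linear interpolation $H(t,\alpha,\lambda)=A\lambda+B$ coincides with the paper's explicit ansatz
\[
G(t,\alpha,\lambda)=\int_{\alpha_1}^{\alpha}\Big(\sum_{a=\pm}\tfrac{\lambda+a}{2}(\partial_tz-B_a)\cdot\partial_{\alpha}z_a^\perp-(c\tau+\tfrac{1}{2})\cdot\partial_{\alpha}z_{\lambda}\Big)\dif\alpha',
\]
which is affine in $\lambda$ and whose boundary values at $\lambda=\pm1$ are exactly your $H_\pm$; in particular your $A=\partial_\lambda G$ and the key estimate $A/(tc)=o(1)$ is precisely the paper's growth condition $\tfrac{1}{tc}\partial_\lambda G=o(1)$. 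Two minor remarks: (i) the matching $H_+(\alpha_2)=H_-(\alpha_2)$ is not needed for well-definedness of $\eta$, since the cusp points lie on $\partial\Mixzone(t)$ and not in the open set $\Mixzone(t)$ where $\eta$ lives (your observation that it nevertheless holds is a correct consequence of \eqref{cond:z:2}); (ii) the identity $\gamma\cdot\tau=A/(tc)$ is exact, with no $O(t)$ correction and no ``$\lambda Ac'/c$'' term, as one sees directly from $\partial_\lambda z_\lambda=-tc\,\tau^\perp$.
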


\begin{proof}
\textit{Step 1.~Analysis of \eqref{bcond:2}-\eqref{gamma}}.
For simplicity we may assume w.l.o.g.~that there is one connected component $(\alpha_1,\alpha_2)=\{c(\alpha)>0\}$.
Recall that $(\alpha,\lambda)\mapsto z_{\lambda}(t,\alpha)$ is a diffeomorphism from $(\alpha_1,\alpha_2)\times(-1,1)$ to $\Mixzone(t)$ (cf.~Rem.~\ref{Rem:diffeomorphism}).
In particular, since $\Mixzone(t)$ is simply-connected, \eqref{divgamma} implies that $\gamma(t)=\nabla^\perp g(t)$ for some $g(t)\in C^1(\Mixzone(t))$ to be determined. Moreover, $g$ can be defined in terms of some $G$ in $(\alpha,\lambda)$-coordinates as $$g(Z(t,\alpha,\lambda)):=G(t,\alpha,\lambda),$$
where $Z(t,\alpha,\lambda):=(t,z_\lambda(t,\alpha))$.
Notice that (recall $z_\lambda=z-\lambda tc\tau^\perp$)
$$\partial_\alpha G=(\nabla g \circ Z) \cdot\partial_{\alpha}z_{\lambda},
\quad\quad
\partial_\lambda G=-tc(\nabla g \circ Z)\cdot\tau^\perp.$$
On the one hand, the boundary conditions \eqref{bcond:2} for $\gamma$ read as
\begin{equation}\label{G:pm}
\partial_{\alpha}G(t,\alpha,a)
=(a(\partial_tz-B_{a})-i(c\tau+\tfrac{1}{2}))\cdot\partial_{\alpha}z_{a}^\perp,
\quad\quad a=\pm.
\end{equation}
On the other hand, for \eqref{hullcond} notice that
\begin{equation}\label{nablag}
\nabla g \circ Z
=\frac{1}{\partial_{\alpha}z_{\lambda}\cdot\tau}\left(\partial_{\alpha}G\tau-\frac{1}{tc}\partial_{\lambda}G\partial_{\alpha}z_{\lambda}^\perp\right).
\end{equation}
In particular,
$$\partial_{\alpha}z_\lambda\cdot\tau
=\partial_{\alpha}z^\circ\cdot\tau+t\left(\dashint_0^t\partial_tz\dif s-\lambda\partial_{\alpha}(c\tau^\perp)\right)\cdot\tau=\partial_{\alpha}z^\circ\cdot\tau+O(t),$$
with $\partial_{\alpha}z^\circ\cdot\tau> 0$ uniformly on $c(\alpha)>0$ by \eqref{Angle}.
Therefore, assuming that $c$ satisfies \eqref{c:hull}, it is enough to find $G$ satisfying \eqref{G:pm} and the following growth conditions
\begin{equation}\label{G:o}
\partial_{\alpha}G=o(1)-(c\tau+\tfrac{1}{2})\cdot\partial_{\alpha}z_\lambda,
\quad\quad
\frac{1}{tc}\partial_{\lambda}G=o(1),
\end{equation}
uniformly on $c(\alpha)>0$
as $t\to 0$, because in this case (recall $\tau=\partial_{\alpha}z^\circ$ with $|\partial_{\alpha}z^\circ|=1$)
\begin{equation}\label{|gamma|}
|\gamma|
=|\nabla g|
\leq\left|c+\tfrac{1}{2}\tfrac{\partial_{\alpha}z_1^\circ}{\partial_{\alpha}z^\circ\cdot\tau}\right|+o(1)<\tfrac{1}{2}.
\end{equation}
\indent\textit{Step 2.~Ansatz for $G$}.
We declare
\begin{equation}\label{G}
G(t,\alpha,\lambda)
:=\int_{\alpha_1}^{\alpha}\left(\sum_{a=\pm}\frac{\lambda+a}{2}(\partial_tz-B_a)\cdot\partial_{\alpha}z_a^\perp-(c\tau+\tfrac{1}{2})\cdot\partial_{\alpha}z_{\lambda}\right)\dif\alpha'.
\end{equation}
Hence, it follows that
\begin{align*}
\partial_{\alpha}G
&=\sum_{a=\pm}\frac{\lambda+a}{2}(\partial_tz-B_a)\cdot\partial_{\alpha}z_a^{\perp}
-(c\tau+\tfrac{1}{2})\cdot\partial_{\alpha}z_\lambda,\\
\partial_{\lambda}G
&=\int_{\alpha_1}^{\alpha}\left((\partial_tz-B)\cdot\partial_{\alpha}z^\perp+tD\cdot\partial_{\alpha}(c\tau)\right)\dif\alpha'.
\end{align*}
Notice that \eqref{G:pm} is satisfied.
Finally, assuming that $z$ satisfies \eqref{cond:z}, then \eqref{G:o} holds.
\end{proof}

\begin{rem}\label{rem:ctau}
Following the previous proof, notice that for  $z(t,\alpha)=(\alpha,f(t,\alpha))$ and $\tau=(-1,0)$ as in \cite{ForsterSzekelyhidi18,NoisetteSzekelyhidi20}, the admissible regime for $c(\alpha)$	reads as
$$|2c(\alpha)-1|<1,$$
which clear is incompatibly with $c(\alpha)=0$.
Remarkably, the authors in \cite{NoisetteSzekelyhidi20} achieved that $c(\alpha)\to 0$ in the limiting case $|\alpha|\to\infty$ for $\alpha\in\R$.
\end{rem}

In view of Proposition \ref{conditions:z,gamma} and Lemma \ref{conditions:z}, we need to find a growth-rate $c(\alpha)$ with certain regularity \eqref{hyp:c} and satisfying the inequality \eqref{c:hull}, and a time-dependent pseudo-interface $z(t,\alpha)$ satisfying the regularity assumptions \eqref{hyp:z}-\eqref{equiCA} and the relations \eqref{bcond:1}\eqref{cond:z}.

\section{The growth-rate}\label{sec:c}
In this section we declare a suitable growth-rate $c$, and also a partition of the unity $\{\psi_0,\psi_1\}$ as discussed in the intro, in terms of $z^\circ\in C^{1,\delta}(\T;\R^2)$. Let us recall what we need.
On the one hand, 
we must construct a regular enough $c$ \eqref{hyp:c} satisfying
the inequality \eqref{c:hull} uniformly on $\{c(\alpha)>0\}$. At the same time, for the relation \eqref{cond:z:2} it is convenient to control the monotonicity of $c$ near the boundary of $\{c(\alpha)>0\}$. On the other hand, we shall construct $\{\psi_0,\psi_1\}$ subordinated to  $\{\partial_{\alpha}z_1^\circ(\alpha)>0\}$ and $\{c(\alpha)>0\}$ respectively, and satisfying $\partial_{\alpha}z_1^\circ> 0$ uniformly on $\mathrm{supp}\,\psi_0$.

In view of Figure \ref{fig:c}, it seems clear that we have enough flexibility to construct such functions. The aim of this section is to make it quantitatively.

\begin{figure}[h!]
	\centering
	\includegraphics[width=0.5\textwidth]{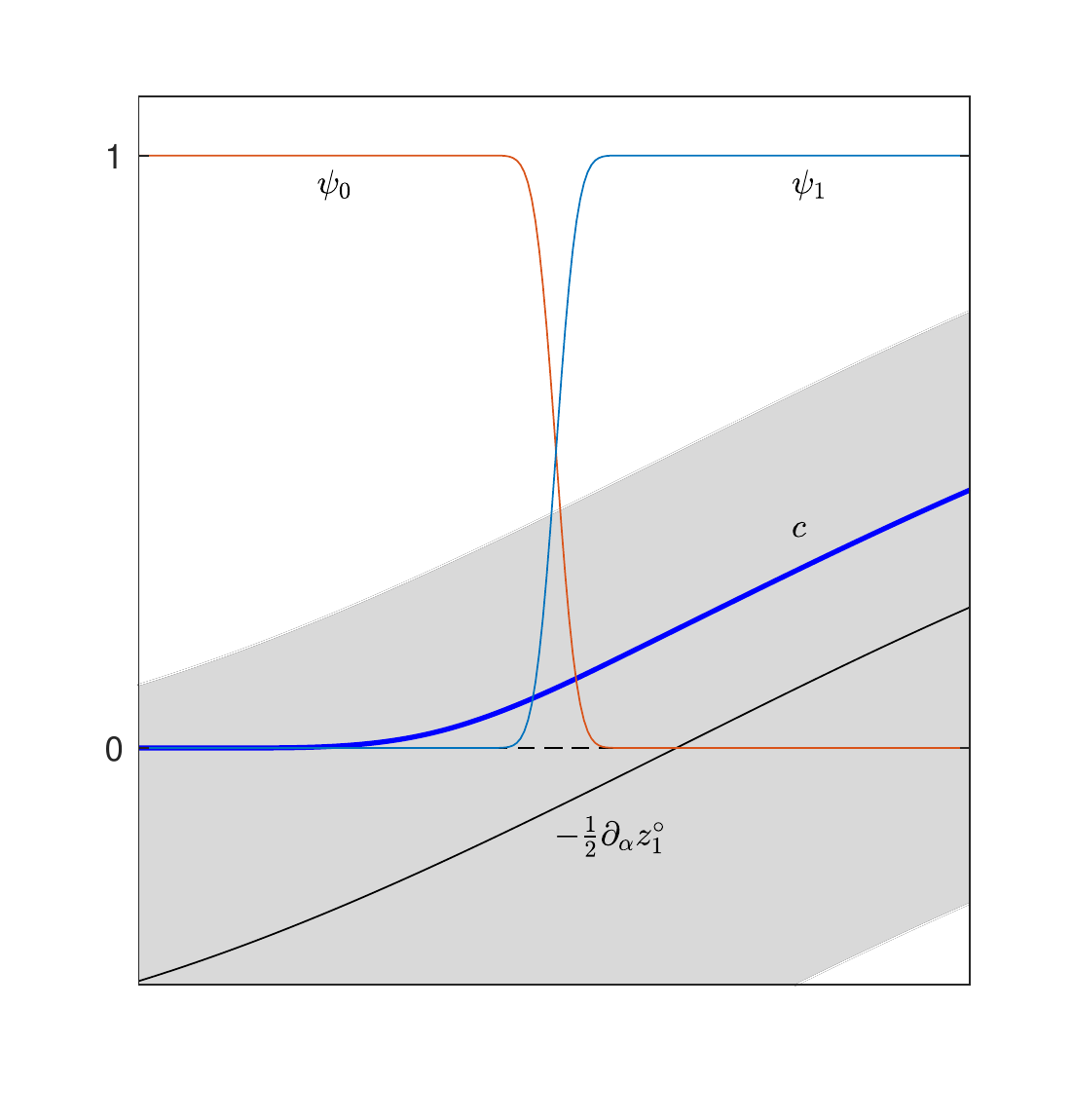}
	\caption{A regular growth-rate $c(\alpha)\geq 0$ (blue) satisfying the inequality $|2c(\alpha)+\partial_{\alpha} z_1^\circ(\alpha)|<1$ (gray zone) uniformly on $\{c(\alpha)>0\}$.
	A partition of the unity $\{\psi_0,\psi_1\}$ subordinated to  $\{\partial_{\alpha}z_1^\circ(\alpha)>0\}$ and $\{c(\alpha)>0\}$ respectively, and satisfying $\partial_{\alpha}z_1^\circ> 0$ uniformly on $\mathrm{supp}\,\psi_0$.}
	\label{fig:c}
\end{figure}

Let us introduce the following sets $I_\eta$ which will be very useful for these purposes.

\begin{lemma}\label{lemma:Isigma} Given $-1\leq\eta\leq 1$ we denote
\begin{equation}\label{sigma}
I_{\eta}:=\{\alpha\in\T\,:\,\partial_{\alpha}z^\circ_1(\alpha)<\eta\}.
\end{equation}
This forms an ascending chain of open subsets of $\T$. Furthermore, for any $-1<\eta_1<\eta_2<1$, we have $I_{\eta_1}\subset\subset I_{\eta_2}$ with
$$\mathrm{dist}(\partial I_{\eta_1},\partial I_{\eta_2})\geq\left(\frac{\eta_2-\eta_1}{|\partial_{\alpha}z_1^\circ|_{C^\delta}}\right)^{1/\delta}.$$
\end{lemma}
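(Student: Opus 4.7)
The plan is to verify the openness/chain structure first, then get the quantitative distance bound from the Hölder modulus of continuity of $\partial_\alpha z_1^\circ$, and finally read off compact containment as a corollary.

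First I would note that under the hypothesis $z^\circ\in C^{2,\delta}(\T;\R^2)$ we have $\partial_\alpha z_1^\circ\in C^{1,\delta}(\T)$, in particular continuous. Thus $I_\eta=(\partial_\alpha z_1^\circ)^{-1}((-\infty,\eta))$ is open in $\T$, and the inclusion $I_{\eta_1}\subset I_{\eta_2}$ for $\eta_1<\eta_2$ is immediate from the definition.

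The core point is to identify the topological boundary of $I_\eta$ for $-1<\eta<1$ with the level set $\{\partial_\alpha z_1^\circ=\eta\}$. If $\alpha\in\partial I_\eta$, continuity of $\partial_\alpha z_1^\circ$ forces $\partial_\alpha z_1^\circ(\alpha)\leq \eta$ (since $\alpha$ is a limit of points in $I_\eta$) and also $\partial_\alpha z_1^\circ(\alpha)\geq\eta$ (since $\alpha\notin I_\eta$ itself, as $I_\eta$ is open). Hence $\partial_\alpha z_1^\circ(\alpha)=\eta$. Now pick $\alpha\in\partial I_{\eta_1}$ and $\beta\in\partial I_{\eta_2}$: by the Hölder estimate
\[
\eta_2-\eta_1=\partial_\alpha z_1^\circ(\beta)-\partial_\alpha z_1^\circ(\alpha)\leq |\partial_\alpha z_1^\circ|_{C^\delta}\,|\alpha-\beta|^\delta,
\]
which rearranges to the claimed lower bound on $|\alpha-\beta|$ and hence on $\mathrm{dist}(\partial I_{\eta_1},\partial I_{\eta_2})$.

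Finally, to get $I_{\eta_1}\subset\subset I_{\eta_2}$ in $\T$ (which is compact), it suffices to check $\overline{I_{\eta_1}}\subset I_{\eta_2}$. If $\alpha\in\overline{I_{\eta_1}}$, either $\alpha\in I_{\eta_1}$, in which case $\partial_\alpha z_1^\circ(\alpha)<\eta_1<\eta_2$, or $\alpha\in\partial I_{\eta_1}$, in which case the previous step gives $\partial_\alpha z_1^\circ(\alpha)=\eta_1<\eta_2$; either way $\alpha\in I_{\eta_2}$. There is no real obstacle here: the only mildly delicate point is the identification $\partial I_\eta\subset\{\partial_\alpha z_1^\circ=\eta\}$, and that is forced by continuity together with the fact that $I_\eta$ is itself open.
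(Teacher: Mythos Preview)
Your proof is correct and follows essentially the same route as the paper: identify points on $\partial I_{\eta_j}$ with the level set $\{\partial_\alpha z_1^\circ=\eta_j\}$ and apply the H\"older modulus of $\partial_\alpha z_1^\circ$ to bound the distance. The paper's version is much terser---it simply notes $\partial_\alpha z_1^\circ(\T)=[-1,1]$ (so the boundaries are nonempty) and writes the H\"older inequality directly---while you additionally spell out the openness, the boundary identification, and the compact containment, but the mechanism is identical.
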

\begin{proof} First of all notice that $\partial_{\alpha}z_1^\circ(\T)=[-1,1]$. Hence, there is $\alpha_j\in\partial I_{\eta_j}$ for $j=1,2$, and thus
$$\eta_2-\eta_1
=\partial_{\alpha}z_1^\circ(\alpha_2)-\partial_{\alpha}z_1^\circ(\alpha_1)
\leq|\partial_{\alpha}z_1^\circ|_{C^\delta}\mathrm{dist}(\partial I_{\eta_1},\partial I_{\eta_2})^\delta,$$
as we wanted to prove.
\end{proof}

In order to interpolate between the Classical and the Mixing Muskat problem, we take a partition of the unity $\{\psi_0,\psi_1\}\subset C_c^\infty(\T;[0,1])$ as follows. Firstly, we define the indicator function
\begin{equation}\label{chi}
\chi_{\eta,s}(\alpha)
:=\left\lbrace\begin{array}{cl}
1, & \mathrm{dist}(\alpha,\T\setminus I_{\eta})>r,\\[0.1cm]
0, & \textrm{otherwise},
\end{array}\right.
\quad\textrm{with}\quad
r:=s\left(\frac{\eta}{|\partial_{\alpha}z_1^\circ|_{C^\delta}}\right)^{1/\delta},
\end{equation}
in terms of some parameters $0<\eta,s<1$ to be determined.
With $\chi_{\eta,s}$ we declare
\begin{equation}\label{partition}
\psi_1:=\phi_r*\chi_{\eta,s},
\quad\quad
\psi_0:=1-\psi_1,
\end{equation}
where $\phi_r$ is a standard mollifier, namely $\phi_r(\alpha):=\tfrac{1}{r}\phi(\tfrac{\alpha}{r})$ for some fixed $\phi\in C_c^\infty(-1,1)$ satisfying $\phi\geq 0$ and $\int\phi =1$, and $r>0$ is given in \eqref{chi}. Hence,
$$\|\partial_{\alpha}^k\psi_j\|_{L^\infty}
\leq \|\partial_{\alpha}^k\phi\|_{L^1}r^{-k},
\quad\quad k\geq 0,\,\,j=0,1.$$

\indent Among all the possible choices for $c$, we declare
\begin{equation}\label{c}
c:=\tfrac{1}{2}\phi_{r}*(\eta\chi_{\eta,s}+(\partial_{\alpha}z_1^\circ)_-),
\end{equation}
where $(\partial_{\alpha}z_1^\circ)_-:=-\min(\partial_{\alpha}z_1^\circ,0)$.
This $c$ is smooth with
$$\|\partial_{\alpha}^kc\|_{L^\infty}
\leq\tfrac{1}{2}(\eta+1)\|\partial_{\alpha}^k\phi\|_{L^1}r^{-k},
\quad\quad k\geq 0,$$
and satisfies $c\geq\psi_1\eta/2$ with
$\mathrm{supp}\,c=\mathrm{supp}\,\psi_1\subset\bar{I}_\eta$.
In the next lemma we show that we can take $\eta$ and $s$ in such a way that the inequality \eqref{c:hull} holds.

\begin{lemma}\label{lemma:c}
The growth-rate \eqref{c} satisfies
$$|2c+\partial_{\alpha}z_1^\circ|
\leq \eta(2+s^\delta)
\quad\textrm{on}\quad c(\alpha)>0.$$
\end{lemma}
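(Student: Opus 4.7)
The plan is to rewrite the expression $2c+\partial_{\alpha}z_1^\circ$ in a form in which two summands are manifestly non-negative and bounded by $\eta$, while the remaining one is a mollification error controlled by the H\"older seminorm of $\partial_{\alpha}z_1^\circ$ and the choice of $r$.

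First, I would use the decomposition $\partial_{\alpha}z_1^\circ=(\partial_{\alpha}z_1^\circ)_+-(\partial_{\alpha}z_1^\circ)_-$ together with the definition \eqref{c} to write
\begin{equation*}
2c(\alpha)+\partial_{\alpha}z_1^\circ(\alpha)
=\phi_r*(\eta\chi_{\eta,s})(\alpha)+(\partial_{\alpha}z_1^\circ(\alpha))_+
+\bigl[\phi_r*(\partial_{\alpha}z_1^\circ)_-(\alpha)-(\partial_{\alpha}z_1^\circ)_-(\alpha)\bigr].
\end{equation*}
Next I would bound the three terms in turn. Since $0\leq\chi_{\eta,s}\leq 1$ and $\int\phi_r=1$, we have $\phi_r*(\eta\chi_{\eta,s})\leq\eta$. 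For the second term, the inclusion $\{c>0\}\subset\mathrm{supp}\,c\subset\bar{I}_\eta$, already noted after \eqref{c}, together with the very definition \eqref{sigma} of $I_\eta$ gives $\partial_{\alpha}z_1^\circ\leq\eta$ on $\{c>0\}$, hence $(\partial_{\alpha}z_1^\circ)_+\leq\eta$ there. For the third term I would use that $x\mapsto x_-=\max(-x,0)$ is $1$-Lipschitz, so $(\partial_{\alpha}z_1^\circ)_-\in C^\delta(\T)$ with $|(\partial_{\alpha}z_1^\circ)_-|_{C^\delta}\leq|\partial_{\alpha}z_1^\circ|_{C^\delta}$, and the standard mollification estimate gives
\begin{equation*}
\bigl|\phi_r*(\partial_{\alpha}z_1^\circ)_-(\alpha)-(\partial_{\alpha}z_1^\circ)_-(\alpha)\bigr|
\leq r^\delta\,|\partial_{\alpha}z_1^\circ|_{C^\delta}=s^\delta\eta,
\end{equation*}
by the very choice of $r$ in \eqref{chi}. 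Combining the three bounds yields $|2c+\partial_{\alpha}z_1^\circ|\leq\eta(2+s^\delta)$ on $\{c>0\}$, as desired.

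I do not expect any genuine obstacle here: the decomposition is algebraic, the pointwise bounds on the non-negative pieces are immediate from the construction of $\chi_{\eta,s}$ and the inclusion $\{c>0\}\subset\bar{I}_\eta$, and the only quantitative input is the elementary fact that convolution with $\phi_r$ moves a $C^\delta$ function by at most $r^\delta$ times its seminorm. The mildly delicate point is simply to check that the positive part operation preserves the $C^\delta$ seminorm so that this last estimate applies to $(\partial_{\alpha}z_1^\circ)_-$.
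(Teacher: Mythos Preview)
Your proof is correct and is essentially identical to the paper's own argument: the same algebraic decomposition $2c+\partial_\alpha z_1^\circ=\eta\psi_1+(\partial_\alpha z_1^\circ)_+ +(\phi_r*(\partial_\alpha z_1^\circ)_- -(\partial_\alpha z_1^\circ)_-)$, the same bounds $\eta\psi_1\le\eta$ and $(\partial_\alpha z_1^\circ)_+\le\eta$ on $\bar I_\eta$, and the same mollification estimate $r^\delta|\partial_\alpha z_1^\circ|_{C^\delta}=s^\delta\eta$. Your extra remark that $x\mapsto x_-$ is $1$-Lipschitz (hence preserves the $C^\delta$ seminorm) makes explicit a step the paper leaves implicit.
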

\begin{proof}	
By writing $f=f_+-f_-$ for $f=\partial_{\alpha}z_1^\circ$, we split
$$
2c+\partial_{\alpha}z_1^\circ
=\eta\psi_1
+f_+
+(\phi_{r}*f_--f_-).
$$
On the one hand, $\eta\psi_1\leq\eta$ and also
$f_+\leq \eta$ on $\bar{I}_{\eta}$ $(\supset\mathrm{supp}\,c)$ by \eqref{sigma}.
On the other hand, by \eqref{chi}
$$|(\phi_{r}*f_--f_-)(\alpha)|
=\left|\int_{-r}^{r}(f_-(\alpha-\beta)-f_-(\alpha))\phi_{r}(\beta)\dif\beta\right|
\leq|f_-|_{C^\delta}r^\delta
\leq s^\delta\eta.$$
This concludes the proof.
\end{proof}

Let us assume from now on that $s<1/3$.
In the next lemma we show that $\partial_{\alpha}z_1^\circ> 0$ uniformly on $\mathrm{supp}\,\psi_0$ $(\supset\supset\T\setminus\mathrm{supp}\,c)$, which will be crucial for the energy estimates in Section \ref{sec:existence}.

\begin{lemma}\label{lemma:stable}
It holds that
$$\partial_{\alpha}z_1^\circ\geq \eta(1-(2s)^\delta)
\quad\textrm{on}\quad
\mathrm{supp}\,\psi_0.$$
\end{lemma}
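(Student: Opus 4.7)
My plan is to unfold the definitions and propagate the pointwise lower bound on $\partial_{\alpha}z_1^\circ$ from the complement of $I_\eta$ to a $2r$-neighborhood using the H\"older continuity of $\partial_{\alpha}z_1^\circ$. First I would observe that $\mathrm{supp}\,\psi_0=\overline{\{\psi_1<1\}}$, so by continuity of $\partial_{\alpha}z_1^\circ$ it is enough to prove the bound at every $\alpha$ with $\psi_1(\alpha)<1$, and then take limits.

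Fix such an $\alpha$. Since $\phi_r$ is a nonnegative mollifier with $\int\phi_r=1$ and $\mathrm{supp}\,\phi_r\subset(-r,r)$, while $\chi_{\eta,s}\in\{0,1\}$, the condition $(\phi_r*\chi_{\eta,s})(\alpha)<1$ forces the set $\{\chi_{\eta,s}=0\}\cap(\alpha-r,\alpha+r)$ to have positive measure. Pick $\beta$ in this intersection. By \eqref{chi}, $\chi_{\eta,s}(\beta)=0$ means $\mathrm{dist}(\beta,\T\setminus I_\eta)\leq r$, so there exists $\gamma\in\T\setminus I_\eta$ with $|\beta-\gamma|\leq r$; by \eqref{sigma} this gives $\partial_{\alpha}z_1^\circ(\gamma)\geq \eta$.

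Now the triangle inequality yields $|\alpha-\gamma|\leq |\alpha-\beta|+|\beta-\gamma|\leq 2r$, and combining with the H\"older continuity of $\partial_{\alpha}z_1^\circ$ one obtains
$$\partial_{\alpha}z_1^\circ(\alpha)\geq \partial_{\alpha}z_1^\circ(\gamma)-|\partial_{\alpha}z_1^\circ|_{C^\delta}|\alpha-\gamma|^\delta\geq \eta-|\partial_{\alpha}z_1^\circ|_{C^\delta}(2r)^\delta.$$
Plugging in the explicit value $r=s(\eta/|\partial_{\alpha}z_1^\circ|_{C^\delta})^{1/\delta}$ from \eqref{chi} collapses the error term to $|\partial_{\alpha}z_1^\circ|_{C^\delta}(2r)^\delta=(2s)^\delta\eta$, yielding $\partial_{\alpha}z_1^\circ(\alpha)\geq\eta(1-(2s)^\delta)$ as desired. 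I do not anticipate any real obstacle here: the lemma is essentially bookkeeping, namely checking that the $r$-scale built into $\chi_{\eta,s}$ plus an additional $r$-scale coming from the mollification with $\phi_r$ together produce a $2r$-neighborhood, which is then controlled by the H\"older seminorm.
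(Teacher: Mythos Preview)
Your proof is correct and follows essentially the same route as the paper: locate a point within distance $2r$ of $\alpha$ where $\partial_{\alpha}z_1^\circ\geq\eta$, then use the H\"older seminorm and the explicit value of $r$ to bound the drop. The paper presents this via a case split ($\alpha\notin I_\eta$ versus $\alpha\in I_\eta\cap\bar{B}_{2r}(\partial I_\eta)$, using that $\psi_1\equiv 1$ on $I_\eta\setminus\bar{B}_{2r}(\partial I_\eta)$) and lands on a point of $\partial I_\eta$, whereas you argue directly from $\psi_1(\alpha)<1$ through the mollifier to reach a point of $\T\setminus I_\eta$; the two arguments are equivalent bookkeeping.
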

\begin{proof} Let $\alpha\in\mathrm{supp}\,\psi_0$. If $\alpha\notin I_\eta$, simply $\partial_{\alpha}z_1^\circ(\alpha)\geq\eta$ by \eqref{sigma}.
Assume now that $\alpha\in I_\eta$. Since $\psi_1\equiv 1$ on the open set $I_\eta\setminus\bar{B}_{2r}(\partial I_\eta)$ by \eqref{chi}\eqref{partition}, necessarily $\alpha\in\bar{B}_{2r}(\partial I_\eta)$. Hence, there is $\alpha_\eta\in\partial I_\eta$ satisfying $|\alpha-\alpha_\eta|\leq 2r$, and thus
$$\partial_{\alpha}z_1^\circ(\alpha)
=\underbrace{\partial_{\alpha}z_1^\circ(\alpha_\eta)}_{=\eta}
+\underbrace{\partial_{\alpha}z_1^\circ(\alpha)-\partial_{\alpha}z_1^\circ(\alpha_\eta)}_{\geq - |\partial_{\alpha}z_1^\circ|_{C^\delta}(2r)^\delta}
\geq \eta(1-(2s)^\delta),$$
where we have applied \eqref{chi}.
\end{proof}

Next we turn to the behavior of $c$ inside $\{c(\alpha)>0\}$.
Of course $c$ is monotone in a neighborhood of $c(\alpha)=0$ and away from zero outside it,
but we give bounds for these properties that only depends on 
$s,\eta,\delta$.

\begin{lemma}\label{lemma:monotonicity}
Let $I=(\alpha_1,\alpha_2)$ be a connected component of $\{c(\alpha)>0\}$ and denote $\bar{\alpha}:=\tfrac{1}{2}(\alpha_1+\alpha_2)$.
If $|I|<2r(1/s-1)$, then $c$ is monotone on $[\alpha_1,\bar{\alpha}]$ and $[\bar{\alpha},\alpha_2]$. Otherwise, $c$ is monotone on each connected component of $I\cap B_{2r}(\partial I)$, while $c\geq\eta/2$ on $I\setminus B_{2r}(\partial I)$. Moreover $\psi_1\eta/2<c$
everywhere.
\end{lemma}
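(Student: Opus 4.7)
My plan is to work from the decomposition
\[
c = \tfrac{\eta}{2}\psi_1 + h, \qquad h := \tfrac{1}{2}\phi_r * (\partial_\alpha z_1^\circ)_- \ge 0,
\]
and to exploit the separation $\mathrm{dist}(\partial I_0,\partial I_\eta)\ge r/s$ supplied by Lemma~\ref{lemma:Isigma} applied with $\eta_1=0$, $\eta_2=\eta$. Since this separation strictly exceeds $r$, every point of $I_0$ lies inside $\{\chi_{\eta,s}=1\}$, so $\{h>0\}\subset\{\psi_1>0\}$ and hence $\{c>0\}=\{\psi_1>0\}$. A short disjointness check then shows that any connected component $I=(\alpha_1,\alpha_2)$ of $\{c>0\}$ coincides with a connected component of $I_\eta$ of length strictly greater than $2r$, and that $\chi_{\eta,s}$ restricted to a neighborhood of $I$ is simply the indicator of $(\alpha_1+r,\alpha_2-r)$.

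The same distance bound localizes $h$: because $I_0\cap I\subset[\alpha_1+r/s,\alpha_2-r/s]$, one obtains
\[
\mathrm{supp}\,h\cap I\subset[\alpha_1+r(1/s-1),\alpha_2-r(1/s-1)].
\]
This interval is empty exactly when $|I|<2r(1/s-1)$, which is the first regime of the lemma. In the opposite regime the hypothesis $s<1/3$ yields $r(1/s-1)>2r$, and the support of $h$ inside $I$ sits inside $[\alpha_1+2r,\alpha_2-2r]$. In both cases $h$ vanishes on $I\cap B_{2r}(\partial I)$, so there $c=\tfrac{\eta}{2}\psi_1$, and the monotonicity of $c$ reduces to that of $\psi_1$.

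For the latter I would pick a standard even unimodal mollifier $\phi$, so that
\[
\psi_1(\alpha)=\int_{\alpha_1+r}^{\alpha_2-r}\phi_r(\alpha-y)\,dy, \qquad \psi_1'(\alpha)=\phi_r(\alpha-\alpha_1-r)-\phi_r(\alpha_2-r-\alpha).
\]
Using $|I|\ge 2r$ one checks that $|\alpha-\alpha_1-r|\le|\alpha_2-r-\alpha|$ on $[\alpha_1,\bar\alpha]$, so the unimodality of $\phi_r$ gives $\psi_1'\ge 0$ there; by symmetry $\psi_1'\le 0$ on $[\bar\alpha,\alpha_2]$. This settles the first regime. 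In the second regime the same formula yields monotonicity of $\psi_1$ on each side-layer of $I\cap B_{2r}(\partial I)$, while on the interior band $[\alpha_1+2r,\alpha_2-2r]$ the ball $(\alpha-r,\alpha+r)$ sits entirely inside $\{\chi_{\eta,s}=1\}$, forcing $\psi_1\equiv 1$ and hence $c\ge\eta/2$. The final bound $\psi_1\eta/2\le c$ is immediate from $h\ge 0$, with strict inequality wherever $h>0$.

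The main obstacle is the bookkeeping: one has to trace how the separation $r/s$ between $I_0$ and $\partial I_\eta$ propagates through the two convolutions to yield exactly the threshold $|I|=2r(1/s-1)$, and then verify that $s<1/3$ makes $r(1/s-1)>2r$ so that the support of $h$ lives safely inside $I\setminus B_{2r}(\partial I)$. Once those scales are aligned, the monotonicity is just the classical observation that convolving a symmetric unimodal mollifier with the indicator of an interval produces a unimodal function peaked at its centre.
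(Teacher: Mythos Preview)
Your argument is correct and follows essentially the same route as the paper: decompose $c=\tfrac{\eta}{2}\psi_1+h$, use the separation $\mathrm{dist}(I_0,\partial I_\eta)\ge r/s$ from Lemma~\ref{lemma:Isigma} to show $h$ vanishes on the relevant boundary layers of $I$, and then reduce to the monotonicity of $\psi_1$. You are in fact more explicit than the paper in two places---identifying $I$ with a component of $I_\eta$ and invoking an even unimodal mollifier to justify the monotonicity of $\psi_1$---where the paper simply writes ``by construction''; note also that the strict inequality $\psi_1\eta/2<c$ in the statement should really be $\le$, as you observe.
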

\begin{proof}
First of all observe that $\phi_{r}*(\partial_{\alpha}z_1^\circ)_-=0$ (and so $c=\psi_1\eta/2$) outside $B_r(I_0)$.\\
\indent\textit{Case $|I|<2r(1/s-1)$}.
Given $\alpha\in B_r(I_0)$, Lemma \ref{lemma:Isigma} and \eqref{chi} imply that
$$\mathrm{dist}(\alpha,\partial I)
\geq\mathrm{dist}(I_0,\partial I)-r
\geq r(1/s-1).$$
Thus, necessarily $\alpha\notin I$. Then, $I\cap B_r(I_0)=\emptyset$ and so $c=\psi_1\eta/2$ with $\psi_1$ monotone on $[\alpha_1,\bar{\alpha}]$ and $[\bar{\alpha},\alpha_2]$ by construction \eqref{chi}\eqref{partition}.\\[0.1cm]
\indent\textit{Case $|I|\geq 2r(1/s-1)$}. Given $\alpha\in B_{2r}(I_0)$ and $\beta\in B_r(\partial I)$, Lemma \ref{lemma:Isigma} and \eqref{chi} imply that
$$|\alpha-\beta|
\geq\mathrm{dist}(I_0,\partial I)-3r
\geq r(1/s-3)>0.$$
Hence, $B_{2r}(\partial I)\cap B_r(I_0)=\emptyset$ and so $c=\psi_1\eta/2$ on $I\cap B_{2r}(\partial I)$ with $\psi_1$ monotone on each connected component of $I\cap B_{2r}(\partial I)$. Finally, $c\geq\psi_1\eta/2$ with $\psi_1\equiv 1$ on $I\setminus B_{2r}(\partial I)$.
\end{proof}

Finally, Lemma \ref{lemma:monotonicity} implies the following estimates which will be useful to control \eqref{cond:z:2}.

\begin{cor}\label{cor:monotone}
In the context of Lemma \ref{lemma:monotonicity}, let us denote $$I(\alpha):=
\left\lbrace\begin{array}{cl}
[\alpha_1,\alpha], & \alpha_1\leq\alpha\leq\bar{\alpha},\\[0.1cm]
[\alpha,\alpha_2], & \bar{\alpha}<\alpha\leq\alpha_2.
\end{array}\right.$$
Then, for $k=0,1$, we have
$$\int_{I(\alpha)}|\partial_{\alpha}^kc(\alpha')|\dif\alpha'
\lesssim c(\alpha),$$
in terms of $(\eta/|\partial_{\alpha}z_1^\circ|_{C^\delta})^{1/\delta}$, $1/\eta$ and $\|\partial_{\alpha}^kc\|_{L^1}$.
\end{cor}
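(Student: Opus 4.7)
The plan is to carry out a case analysis driven directly by the dichotomy in Lemma \ref{lemma:monotonicity}. Write $I=(\alpha_1,\alpha_2)$. Since $I$ is a connected component of the open set $\{c>0\}$ and $c\in C^{1,\delta}$, we have $c(\alpha_1)=c(\alpha_2)=0$ and $c>0$ on $I$; consequently, whenever $c$ is monotone on a sub-interval one of whose endpoints is $\alpha_1$ (resp.~$\alpha_2$) it must be increasing (resp.~decreasing) there.

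In the short-interval branch $|I|<2r(1/s-1)$, Lemma \ref{lemma:monotonicity} gives monotonicity on $[\alpha_1,\bar\alpha]$ and on $[\bar\alpha,\alpha_2]$, and $I(\alpha)$ is contained in whichever of these two intervals contains $\alpha$. For $k=1$ the sign of $\partial_\alpha c$ is fixed on that interval and $c$ vanishes at its endpoint in $\partial I$, so the fundamental theorem of calculus gives $\int_{I(\alpha)}|\partial_\alpha c|\,d\alpha'=c(\alpha)$ with no constant at all. For $k=0$, the same monotonicity yields $c(\alpha')\leq c(\alpha)$ throughout $I(\alpha)$, and since $|I(\alpha)|\leq |I|/2\lesssim r\sim(\eta/|\partial_\alpha z_1^\circ|_{C^\delta})^{1/\delta}$ the bound closes with the allowed constant.

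In the long-interval branch $|I|\geq 2r(1/s-1)$, I would split further according to whether $\alpha$ lies in $I\cap B_{2r}(\partial I)$ or in the complementary central piece $I\setminus B_{2r}(\partial I)$. In the former case, by symmetry we may assume $\alpha\in(\alpha_1,\alpha_1+2r)$; then $I(\alpha)=[\alpha_1,\alpha]$ is entirely inside the monotone component of $I\cap B_{2r}(\partial I)$ adjacent to $\alpha_1$, of length at most $2r$, so the same FTC argument for $k=1$ and monotonicity-with-$|I(\alpha)|\leq 2r$ argument for $k=0$ apply verbatim. In the latter case the Lemma provides the pointwise lower bound $c(\alpha)\geq\eta/2$, which trivially upgrades any $L^1$ bound: $\int_{I(\alpha)}|\partial_\alpha^k c|\,d\alpha'\leq\|\partial_\alpha^k c\|_{L^1}\leq(2/\eta)\|\partial_\alpha^k c\|_{L^1}\,c(\alpha)$.

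I do not foresee a genuine obstacle; the proof is essentially prescribed by Lemma \ref{lemma:monotonicity}. The $k=1$ estimate is driven by the fact that $c$ vanishes at the nearer endpoint of $\partial I$ inside a monotone neighborhood of it, while the $k=0$ estimate combines that monotonicity with the $\eta/2$ lower bound in the bulk of $I$. The only bookkeeping is to check that the constants that appear, namely $\sim r$ and $\sim 2r$ in the monotone pieces and $2/\eta$ together with $\|\partial_\alpha^k c\|_{L^1}$ in the bulk, are exactly those allowed in the statement.
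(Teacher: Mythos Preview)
Your proof is correct and follows essentially the same approach as the paper: the same case split driven by Lemma \ref{lemma:monotonicity}, the same use of monotonicity plus $c(\alpha_j)=0$ on the boundary pieces, and the same use of the lower bound $c(\alpha)\geq\eta/2$ on the central piece. The paper merely compresses your separate $k=0$ and $k=1$ arguments into the single inequality $\int_{I(\alpha)}|\partial_\alpha^k c|\,\dif\alpha'\leq |I(\alpha)|^{1-k}c(\alpha)$, but the content is identical.
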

\begin{proof}
\textit{Case $|I|<2r(1/s-1)$}. For all $\alpha\in I$, the monotonicity of $c$ on $I(\alpha)$ implies
$$\int_{I(\alpha)}|\partial_{\alpha}^kc(\alpha')|\dif\alpha'
\leq |I(\alpha)|^{1-k}c(\alpha),$$
with
$|I(\alpha)|\leq r(1/s-1)=(1-s)(\eta/|\partial_{\alpha}z_1^\circ|_{C^\delta})^{1/\delta}\leq(\eta/|\partial_{\alpha}z_1^\circ|_{C^\delta})^{1/\delta}$.\\[0.1cm]
\indent\textit{Case $|I|\geq2r(1/s-1)$}. For all $\alpha\in I\cap B_{2r}(\partial I)$, the previous argument works with $|I(\alpha)|\leq 2r\leq (\eta/|\partial_{\alpha}z_1^\circ|_{C^\delta})^{1/\delta}$.
Finally, for all $\alpha\in I\setminus B_{2r}(\partial I)$, simply
$$\int_{I(\alpha)}|\partial_{\alpha}^kc(\alpha')|\dif\alpha'
\leq\|\partial_{\alpha}^kc\|_{L^1}\frac{c(\alpha)}{\eta/2},$$
because $c(\alpha)\geq\eta/2$.
\end{proof}

From now on we fix the parameters $\eta$ and $s$ satisfying $\eta(2+s^\delta)<1$ with $s<1/3$.
We remark that they are not necessarily very small (e.g.~we can take $\eta=s=\tfrac{1}{4}$ for $\delta=1$).

\section{The pseudo-interface}\label{sec:z}

We define our pseudo-interface $z$ as the solution of the integro-differential equation
\begin{equation}\label{eq:z}
\begin{split}
\partial_tz&=F(t,z^\circ,z),\\
z|_{t=0}&=z^\circ,
\end{split}
\end{equation}
given by the operator
$$F:=\psi_0E+\psi_1 E^{1)}-(t\kappa+i(tD^{(0)}\cdot\partial_{\alpha}(c\tau)+h\psi_1)\partial_{\alpha}z^{\circ}),$$
where 
$\tau$ is given in \eqref{t},
$\{\psi_0,\psi_1\}$ is the partition of the unity we fixed in \eqref{partition} and $c$ the growth-rate \eqref{c}.
The operator $E(t,z^\circ,z)$ extending $B$ outside $tc(\alpha)=0$ was already introduced
in \eqref{def:E2},
$$E:=\sum_{b=\pm}B_{b,b}.$$
Thus, it expands as
$$E(t,\alpha)
:=\frac{1}{2\pi}\sum_{b=\pm}\int_{\T}\left(\frac{1}{z_{b}(t,\alpha)-z_{b}(t,\beta)}\right)_1(\partial_{\alpha}z_{b}(t,\alpha)-\partial_{\alpha}z_{b}(t,\beta))\dif\beta.$$
The term $E^{1)}(t,z^\circ)$ is 
$$E^{1)}:=E^{(0)}+tE^{(1)},$$
where $E^{(0)}(z^\circ)$ and $E^{(1)}(z^\circ)$ are
\begin{equation}\label{E01}
\begin{split}
E^{(0)}(\alpha)
&:=\frac{1}{2\pi}\sum_{b=\pm}\int_{\T}\left(\frac{1}{z^\circ(\alpha)-z^\circ(\beta)}\right)_1(\partial_{\alpha}z^\circ(\alpha)-\partial_{\alpha}z^\circ(\beta))\dif\beta,\\
E^{(1)}(\alpha)
&:=\frac{1}{2\pi}\sum_{b=\pm}\int_{\T}\left(\frac{1}{z^\circ(\alpha)-z^\circ(\beta)}\right)_1(\partial_{\alpha}z_b^{(1)}(\alpha)-\partial_{\alpha}z_b^{(1)}(\beta))\dif\beta\\
&\,\,-\frac{1}{2\pi}\sum_{b=\pm}\int_{\T}\left(\frac{z_b^{(1)}(\alpha)-z_b^{(1)}(\beta)}{(z^\circ(\alpha)-z^\circ(\beta))^2}\right)_1(\partial_{\alpha}z^\circ(\alpha)-\partial_{\alpha}z^\circ(\beta))\dif\beta,
\end{split}
\end{equation}
with
$$z_b^{(1)}:=E^{(0)}-bc\tau^\perp.$$
The terms $\kappa(z^\circ)$ and $D^{(0)}(z^\circ)$ are
\begin{equation}\label{kappaD}
\begin{split}
\kappa(\alpha)&:=2\left(\partial_{\alpha}^2z^\circ\left(\frac{ c\tau}{(\partial_{\alpha}z^\circ)^2}\right)_1
+i\left(\frac{1}{\partial_{\alpha}z^\circ}\right)_2\partial_{\alpha}(c\tau)\right),\\
D^{(0)}(\alpha)&:=-i(c\tau+\tfrac{1}{2}).
\end{split}
\end{equation}
The term $h(t,z^\circ,z)$ is the time-dependent average defined on each connected component $(\alpha_1,\alpha_2)$ of $\{c(\alpha)>0\}$ as
\begin{equation}\label{mean}
h(t)
:=\frac{\displaystyle\int_{\alpha_1}^{\alpha_2} H\dif\alpha}
{\displaystyle\int_{\alpha_1}^{\alpha_2}\psi_1
\partial_{\alpha}z\cdot\partial_{\alpha}z^\circ\dif\alpha},
\end{equation}
where
$$H(t,\alpha):=(E-B-t\kappa)\cdot\partial_{\alpha}z^\perp
+\psi_1(E^{1)}-E)\cdot\partial_{\alpha}z^\perp
+t(D-D^{(0)}\partial_{\alpha}z\cdot\partial_{\alpha}z^\circ)\cdot\partial_{\alpha}(c\tau).$$
We notice that, although $h$ is piecewise constant, $h\psi_1$ is smooth in $\alpha$ (recall $\psi_1\lesssim c$ by Lemma~\ref {cor:monotone}).

As we will see in the next lemmas, $E^{(0)}=E|_{t=0}$, $E^{(1)}=\partial_tE|_{t=0}$, $z_b^{(1)}=\partial_tz_b|_{t=0}$ and $D^{(0)}=D|_{t=0}$. Thus, $E^{1)}$ equals to the first order expansion in time of $E$. In addition, we will see that $\kappa=\partial_t(E-B)|_{t=0}$.

In the rest of this section we will assume that there exists a solution $z$ of equation \eqref{eq:z} satisfying \eqref{hyp:z}-\eqref{equiCA} and show that this implies that $z$ satisfies \eqref{bcond:1}\eqref{cond:z} as well.

\begin{thm}\label{thm:subsolution}
Let $z^\circ\in H^{k_\circ}(\T;\R^2)$ be a closed chord-arc curve with $k_\circ\geq 6$.
Assuming that, for some $T>0$, there exists $z\in C_tH^{k_\circ-2}$ with $\partial_tz\in C_tH^{k_\circ-3}$ solving \eqref{eq:z} and satisfying
\eqref{Angle}\eqref{equiCA}, then $z$ satisfies \eqref{bcond:1}\eqref{cond:z}.
\end{thm}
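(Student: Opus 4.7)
My plan is to substitute the equation $\partial_t z = F$ into the three relations \eqref{bcond:1}, \eqref{cond:z:1}, \eqref{cond:z:2} and show that each reduces to an identity coming from the construction of $F$, $\kappa$, $D^{(0)}$ and $h$. The starting algebraic observation, obtained from the definition of $F$ by grouping terms around $E-B$, $E^{1)}-E$ and $D-D^{(0)}\partial_\alpha z\cdot\partial_\alpha z^\circ$ and using $\psi_0+\psi_1=1$, is
\[
(\partial_t z - B)\cdot\partial_\alpha z^\perp + tD\cdot\partial_\alpha(c\tau) = H - h\psi_1\,\partial_\alpha z\cdot\partial_\alpha z^\circ.
\]
Thus \eqref{cond:z:2} becomes an integral estimate for the right-hand side, while \eqref{bcond:1} follows once one verifies that both sides vanish on $\{tc(\alpha)=0\}$.

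For \eqref{bcond:1} I would separate $t=0$ and $c(\alpha)=0$. At $t=0$ every summand of $F$ carrying an explicit $t$ is trivial, and $h(0)=0$ since $H(0)\equiv 0$ (each piece of $H$ vanishes because $E(0)=B(0)=E^{(0)}$), whence $F(0)=(\psi_0+\psi_1)E(0)=B(0)$. For $t>0$ with $c(\alpha)=0$, the smoothness of $c$ together with $\mathrm{supp}\,\psi_1=\mathrm{supp}\,c$ forces $\psi_1(\alpha)=0$ and $\partial_\alpha(c\tau)(\alpha)=0$, hence $\kappa(\alpha)=0$; at the same time $z_\pm(t,\alpha)=z(t,\alpha)$ gives $B_{+,+}=B_{-,+}$ and $B_{+,-}=B_{-,-}$ at $\alpha$, so $E(t,\alpha)=B(t,\alpha)$. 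These identifications collapse $F$ to $B$ at such $\alpha$. Relation \eqref{cond:z:1} then follows from continuity: under the hypotheses $z\in C_tH^{k_\circ-2}$ and $\partial_t z\in C_tH^{k_\circ-3}$ with $k_\circ\geq 6$, Sobolev embedding guarantees that $\partial_t z$ and $B_a$ are uniformly continuous in $t$, and the previous argument shows they coincide at $t=0$.

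The hard part is \eqref{cond:z:2}. By the identity above it amounts to
\[
\frac{1}{tc(\alpha)}\int_{\alpha_1}^{\alpha}\bigl(H - h\psi_1\,\partial_\alpha z\cdot\partial_\alpha z^\circ\bigr)\, d\alpha' = o(1)
\]
uniformly in $\alpha$ as $t\to 0$. The definition \eqref{mean} of $h$ is tailored so that the integrand has zero mean on $(\alpha_1,\alpha_2)$, which lets one replace the integral by $\pm\int_{I(\alpha)}$ over the smaller subinterval $I(\alpha)$ of Corollary \ref{cor:monotone}. The plan is then to prove pointwise bounds of the form $O(t^2)(c+|\partial_\alpha c|)$ for each of the summands $(E-B-t\kappa)\cdot\partial_\alpha z^\perp$, $\psi_1(E^{1)}-E)\cdot\partial_\alpha z^\perp$, $t(D-D^{(0)}\partial_\alpha z\cdot\partial_\alpha z^\circ)\cdot\partial_\alpha(c\tau)$ and $h\psi_1\,\partial_\alpha z\cdot\partial_\alpha z^\circ$. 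These follow from the identifications $\kappa=\partial_t(E-B)|_{t=0}$ and $E^{1)}=E(0)+t\partial_tE(0)$, together with $\psi_1\lesssim c$, $|\partial_\alpha(c\tau)|\lesssim c+|\partial_\alpha c|$, the vanishing of $D-D^{(0)}\partial_\alpha z\cdot\partial_\alpha z^\circ$ at $t=0$, and the $O(t^2)$ bound on $h$ that results from inserting the bounds for the other summands into the numerator of \eqref{mean}. Combining them with Corollary \ref{cor:monotone} yields $\int_{I(\alpha)}(c+|\partial_\alpha c|)\,d\alpha'\lesssim c(\alpha)$, so the full integral is $O(t^2)c(\alpha)=o(tc(\alpha))$.

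The main technical obstacle is the uniform-in-$\alpha$ Taylor expansion of $E-B$: one must verify that $\partial_t(E-B)|_{t=0}$ agrees with the explicit formula \eqref{kappaD} for $\kappa$, and, more importantly, that the remainder $E-B-t\kappa$ carries the weight $c+|\partial_\alpha c|$ needed to absorb the $1/c(\alpha)$ in front of the integral. This weight reflects the vanishing $E-B\equiv 0$ on $\{c=0\}$, but extracting it quantitatively requires a careful expansion of the $B_{a,b}$ with $a\neq b$ using the equi-chord-arc condition \eqref{equiCA} and the Sobolev regularity of $z$. Once this weighted control is in place, all the remaining bounds are routine.
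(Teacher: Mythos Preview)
Your overall plan matches the paper's: the same algebraic identity reducing \eqref{cond:z:2} to the integral of $H-h\psi_1\partial_\alpha z\cdot\partial_\alpha z^\circ$, the same use of the zero-mean property \eqref{mean} to pass to the shorter interval $I(\alpha)$, the same pointwise bounds $O(t^2(c+|\partial_\alpha c|))$ on each piece of $H$, and the same appeal to Corollary~\ref{cor:monotone}. You also correctly isolate the weighted estimate $E-B-t\kappa=O(t^2(c+|\partial_\alpha c|))$ as the crux; in the paper this is Lemma~\ref{lemma:E-B}, and it rests on an argument-principle identity (Lemma~\ref{lemma:residue}) bounding the integrals $C_{\lambda,\mu}^k=\int\beta^{k-1}(z_\lambda-z_\mu')^{-k}\,d\beta$ uniformly in $\alpha,\lambda,\mu,t$.

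The one place where your sketch has a gap is \eqref{cond:z:1}. Asserting that $B_a$ is uniformly continuous in $t$ ``by Sobolev embedding'' is not justified: the cross terms $B_{a,-a}$ involve the off-diagonal kernel $(z_a(\alpha)-z_{-a}(\beta))^{-1}$, whose uniform control as $t\to 0$ (uniformly over $\alpha$ with $c(\alpha)>0$, including points where $c(\alpha)$ is arbitrarily small) is precisely what Lemma~\ref{lemma:residue} provides and is not a soft consequence of the regularity of $z$. The paper does not argue by continuity; instead it proves the quantitative bound $E-B_a=O(t(c+|\partial_\alpha c|))$ (Lemma~\ref{lemma:E-Ba}), again via Lemma~\ref{lemma:residue}, and inserts this together with $E^{1)}-E=O(t^2)$ and $h=O(t^2)$ (Lemma~\ref{lemma:E1}) into the expansion \eqref{cond:z:1:1} to obtain $\partial_tz-B_a=O(t)$. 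Since you already need the argument-principle lemma for the second-order expansion in \eqref{cond:z:2}, the fix is simply to use its first-order consequence here as well rather than the continuity shortcut.
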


The proof of this theorem relies on the forthcoming Lemmas \ref{lemma:residue}-\ref{lemma:E1}. We start by rewriting some of the terms suitably.
Let us observe that, assuming \eqref{eq:z}, the l.h.s.~of \eqref{cond:z:1} reads, for $a=\pm$, as
\begin{equation}\label{cond:z:1:1}
\partial_tz-B_a
=(E-B_a)+\psi_1 (E^{1)}-E)-(t\kappa+i(tD^{(0)}\cdot\partial_{\alpha}(c\tau)+h\psi_1)\partial_{\alpha}z^{\circ}),
\end{equation}
and for \eqref{cond:z:2} we have
\begin{align}
&(\partial_tz-B)\cdot\partial_{\alpha}z^\perp+tD\cdot\partial_{\alpha}(c\tau)\label{cond:z:2:1}\\
&=(E-B-t\kappa)\cdot\partial_{\alpha}z^\perp
+\psi_1(E^{1)}-E)\cdot\partial_{\alpha}z^\perp
+t(D-D^{(0)}\partial_{\alpha}z\cdot\partial_{\alpha}z^\circ)\cdot\partial_{\alpha}(c\tau)-h\psi_1\partial_{\alpha}z\cdot\partial_{\alpha}z^{\circ}.\nonumber
\end{align}
The core of the section is to prove that $E$ remains close to $B$ in $L^\infty$. In Lemma \ref{lemma:E-Ba} we show that $E-B_a=O(t(c+|\partial_{\alpha}c|))$. This is sufficient for \eqref{cond:z:1}.  Indeed, we show in Lemma \ref{lemma:E-B} that $E-B-t\kappa=O(t^2(c+|\partial_{\alpha}c|))$, which is sufficient for \eqref{cond:z:2}. 
In particular, this implies that $\kappa=\partial_t(E-B)|_{t=0}$ (a similar term appears in \cite{ForsterSzekelyhidi18,NoisetteSzekelyhidi20}).
Both estimates are based on a nice technical consequence of the argument principle Lemma \ref{lemma:residue} (a related argument appears in ~\cite[Lemma 4.2]{MengualSzekelyhidipp}). In Lemma \ref{lemma:D} we show that $D-D^{(0)}\partial_{\alpha}z\cdot\partial_{\alpha}z^\circ=O(t)$. The term $h$ has been introduced because
\eqref{cond:z:2} reads as
$$\int_{\alpha_1}^{\alpha}(H-h\psi_1\partial_{\alpha}z\cdot\partial_{\alpha}z^{\circ})\dif\alpha'=c(\alpha)o(t).$$
This requires to obtain a cancellation for $\alpha=\alpha_2$, which is equivalent to
 \eqref{mean}.
Finally, we show in Lemma \ref{lemma:E1} that $h$, $E-E^{1)}=O(t^2)$. All the ingredients are ready to  prove Theorem \ref{thm:subsolution}.

In the proof of Lemmas \ref{lemma:residue}-\ref{lemma:E1} all the assumptions in Theorem \ref{thm:existsz} are valid.
We start with the technical lemma.

\begin{lemma}\label{lemma:residue}
For all $k\in\N$, the function
\begin{equation}\label{C}
C_{\lambda,\mu}^k(t,\alpha):=\int_{\T}\frac{\beta^{k-1}}{(z_{\lambda}(t,\alpha)-z_{\mu}(t,\alpha-\beta))^k}\dif\beta,
\end{equation}
is uniformly bounded on $c(\alpha)>0$, $0<t\leq T$ and $\lambda,\mu\in[-1,1]$ with $\lambda\neq\mu$.
\end{lemma}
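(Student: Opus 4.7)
The plan is to prove the bound by induction on $k$, with the base case $k=1$ handled via the argument principle and the inductive step via an integration by parts that reduces $C_{\lambda,\mu}^k$ to $C_{\lambda,\mu}^{k-1}$. The main obstacle lies at $k=1$: the naive absolute-value estimate $|z_\lambda-z_\mu|\gtrsim\sqrt{\beta^2+((\lambda-\mu)tc(\alpha))^2}$ from \eqref{equiCA} yields only an $O(\log((\lambda-\mu)tc(\alpha)))$ bound, which diverges as $(\lambda-\mu)tc(\alpha)\to 0$. One must therefore exploit the cancellation inherent in the complex-valued denominator.

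For the base case $k=1$, multiply and divide by $\partial_\alpha z_\mu(t,\alpha)$ to split the integrand as
\[
\frac{1}{\partial_\alpha z_\mu(t,\alpha)}\frac{\partial_\alpha z_\mu(t,\alpha-\beta)}{z_\lambda(t,\alpha)-z_\mu(t,\alpha-\beta)}+\frac{1}{\partial_\alpha z_\mu(t,\alpha)}\frac{\partial_\alpha z_\mu(t,\alpha)-\partial_\alpha z_\mu(t,\alpha-\beta)}{z_\lambda(t,\alpha)-z_\mu(t,\alpha-\beta)}.
\]
Noting $\partial_\beta[z_\lambda(t,\alpha)-z_\mu(t,\alpha-\beta)]=\partial_\alpha z_\mu(t,\alpha-\beta)$, the first summand is a total $\beta$-derivative of $\log(z_\lambda-z_\mu)$ and integrates over $\T$ to $2\pi i$ times the winding number of the closed curve $\beta\mapsto z_\mu(t,\alpha-\beta)$ around $z_\lambda(t,\alpha)$, which is uniformly bounded (the point lies off the curve by \eqref{equiCA}). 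In the second summand, the H\"older regularity of $\partial_\alpha z_\mu$ bounds the numerator by $|\beta|^\delta$, while \eqref{equiCA} gives $|z_\lambda-z_\mu|\gtrsim|\beta|$, producing an integrable $|\beta|^{\delta-1}$. Taking $\mu=\lambda$ in \eqref{equiCA} and letting $\beta\to 0$ yields $|\partial_\alpha z_\mu(t,\alpha)|\geq 1/\mathcal{C}(c,z)$, so division by $\partial_\alpha z_\mu(t,\alpha)$ is benign.

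For the inductive step $k\geq 2$, the same algebraic splitting gives
\[
\partial_\alpha z_\mu(t,\alpha)\,C_{\lambda,\mu}^k(t,\alpha)=\int_\T\frac{\beta^{k-1}\partial_\alpha z_\mu(t,\alpha-\beta)}{(z_\lambda-z_\mu)^k}\dif\beta+R_k,
\]
with $|R_k|\lesssim\int_\T|\beta|^{k-1+\delta}|z_\lambda-z_\mu|^{-k}\dif\beta\lesssim\int_\T|\beta|^{\delta-1}\dif\beta<\infty$. On the main term, the identity $\partial_\alpha z_\mu(t,\alpha-\beta)\,(z_\lambda-z_\mu)^{-k}=-\tfrac{1}{k-1}\partial_\beta(z_\lambda-z_\mu)^{-(k-1)}$ permits an integration by parts in $\beta$; the boundary contributions at $\pm\ell_\circ/2$ are bounded because $|z_\lambda(t,\alpha)-z_\mu(t,\alpha\pm\ell_\circ/2)|\gtrsim\ell_\circ$ by \eqref{equiCA}, and the bulk term is exactly $C_{\lambda,\mu}^{k-1}(t,\alpha)$. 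Dividing by $\partial_\alpha z_\mu(t,\alpha)$ and invoking the inductive hypothesis closes the argument.
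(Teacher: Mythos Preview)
Your proposal is correct and follows essentially the same approach as the paper: the same add-and-subtract of $\partial_\alpha z_\mu(t,\alpha-\beta)/\partial_\alpha z_\mu(t,\alpha)$ yields a H\"older-controlled remainder plus a main term, the argument principle handles $k=1$, and integration by parts in $\beta$ reduces $k$ to $k-1$ with a boundary term bounded by the chord-arc constant. The only cosmetic difference is that the paper records the explicit value $(1+\mathrm{sgn}(\lambda-\mu))\pi i$ for the winding-number integral, which it later reuses, whereas you merely note that it is bounded.
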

\begin{proof}
First of all, by adding and subtracting $\partial_{\alpha}z_{\mu}'/\partial_{\alpha}z_{\mu}$ (recall Sec.~\ref{sec:Intro} Notation) we split
\begin{equation}\label{residue:1}
C_{\lambda,\mu}^k(t,\alpha):=\int_{\T}\frac{\beta^{k-1}}{(z_{\lambda}-z_{\mu}')^k}\left(1-\frac{\partial_{\alpha}z_{\mu}'}{\partial_{\alpha}z_{\mu}}\right)\dif\beta
+\frac{1}{\partial_{\alpha}z_{\mu}}\int_{\T}\beta^{k-1}\frac{\partial_{\alpha}z_{\mu}'}{(z_{\lambda}-z_{\mu}')^k}\dif\beta.
\end{equation}	
The first term is controlled by $\mathcal{C}(c,z)$ and $\|\partial_{\alpha}z_{\mu}\|_{C_tC^{0,\delta}}$ (recall \eqref{equiCA}). The identity \eqref{residue:1} allows to prove the result by induction on $k$. For $k=1$, the second integral in \eqref{residue:1} is explicit (cf.~\eqref{Ind})
$$\int_{\T}\frac{\partial_{\alpha}z_{\mu}'}{z_{\lambda}-z_{\mu}'}\dif\beta
=(1+\sgn(\lambda-\mu))\pi i,$$
where we have applied the Cauchy's argument principle for $\lambda\neq\mu$ and $tc(\alpha)>0$. For $k\geq 2$, an integration by parts yields
$$
\int_{\T}\beta^{k-1}\frac{\partial_{\alpha}z_{\mu}'}{(z_{\lambda}-z_{\mu}')^k}\dif\beta
=-\frac{1}{k-1}\left(\frac{\beta}{z_{\lambda}-z_{\mu}'}\right)^{k-1}\Big|_{\beta=-\ell_\circ/2}^{\beta=+\ell_\circ/2}
+\int_{\T}\frac{\beta^{k-2}}{(z_{\lambda}-z_{\mu}')^{k-1}}\dif\beta
=S_{\lambda,\mu}^{k-1}
+C_{\lambda,\mu}^{k-1},
$$
where 
\begin{equation}\label{S}
S_{\lambda,\mu}^{j}(t,\alpha):=\frac{(-1)^{j}-1}{j}\left(\frac{\ell_\circ/2}{z_{\lambda}(t,\alpha)-z_{\mu}(t,\alpha+\ell_\circ/2)}\right)^{j}.
\end{equation}
This $S_{\lambda,\mu}^{k-1}$ is controlled by $\mathcal{C}(c,z)$ while $C_{\lambda,\mu}^{k-1}$ is bounded by induction hypothesis. Furthermore, this recursive formula for $C_{\lambda,\mu}^k$ yields ($S_{\lambda,\mu}^0=0$)
\begin{equation}\label{Residue:recurrence}
C_{\lambda,\mu}^k
=
\sum_{j=0}^{k-1}\frac{1}{(\partial_{\alpha}z_{\mu})^{k-j}}\left(\int_{\T}\beta^{j}\frac{\partial_{\alpha}z_{\mu}-\partial_{\alpha}z_{\mu}'}{(z_{\lambda}-z_{\mu}')^{j+1}}\dif\beta
+S_{\lambda,\mu}^{j}\right)
+\frac{(1+\sgn(\lambda-\mu))\pi i}{(\partial_{\alpha}z_{\mu})^{k}},
\end{equation}
which is bounded by $\mathcal{C}(c,z)$ and $\|\partial_{\alpha}z_{\mu}\|_{C_tC^{0,\delta}}$.
\end{proof}

\begin{lemma}\label{lemma:E-Ba}
For $0\leq t\leq T$ it holds that
$$E-B_{a}=O(t(c+|\partial_{\alpha}c|)).$$
\end{lemma}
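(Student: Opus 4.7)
The key reduction is that since $E=B_{+,+}+B_{-,-}$ and $B_a=B_{a,+}+B_{a,-}$, the diagonal term $B_{a,a}$ cancels and we are reduced to a single off-diagonal difference,
$$
E-B_a=B_{b,b}-B_{a,b},\qquad b=-a.
$$
I would start from the decomposition \eqref{Bab:split} of $B_{a,b}$ (established in the proof of Proposition \ref{prop:vnormal}), which separates $B_{a,b}$ into an ``interior'' integral with integrand involving $\partial_\alpha z_b-\partial_\alpha z_b'$ rather than $\partial_\alpha z_a-\partial_\alpha z_b'$, plus an explicit correction $-i(a-b)t\,\partial_\alpha(c\tau)(\alpha)\cdot\tfrac{1}{2\pi}(\int d\beta/(z_a-z_b'))_1$. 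Since $B_{b,b}$ is precisely this interior integral evaluated at $a=b$ (the correction then vanishes), the subtraction yields
$B_{b,b}-B_{a,b}=I_1+I_2$,
where $I_2$ is the correction term and $I_1$ is the difference of the two interior integrals.

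The piece $I_2$ is immediately $O(t(c+|\partial_\alpha c|))$: the prefactor is pointwise bounded by a constant times $t(c(\alpha)+|\partial_\alpha c(\alpha)|)$, while its integral equals $\mathrm{Re}\,C^1_{a,b}(t,\alpha)$ in the notation of Lemma \ref{lemma:residue} and is therefore uniformly bounded, precisely because $a\neq b$. For $I_1$, the plan is to extract the small parameter algebraically via
$$
\frac{1}{z_b-z_b'}-\frac{1}{z_a-z_b'}=\frac{z_a-z_b}{(z_a-z_b')(z_b-z_b')}=\frac{(b-a)tc(\alpha)\,\tau(\alpha)^\perp}{(z_a-z_b')(z_b-z_b')},
$$
so that, taking real parts (the scalar $(b-a)tc(\alpha)$ is real), one obtains
$$
I_1=\frac{(b-a)tc(\alpha)}{2\pi}\int_\T \mathrm{Re}\!\left(\frac{\tau(\alpha)^\perp}{(z_a-z_b')(z_b-z_b')}\right)(\partial_\alpha z_b-\partial_\alpha z_b')\,d\beta.
$$
The explicit prefactor $tc(\alpha)$ then accounts for the target order, and it remains to control the remaining integral uniformly on $\{c>0\}\times[0,T]$. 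For this I would combine three ingredients: the plain chord-arc bound $|z_b-z_b'|\geq|\beta|/\mathcal{C}(c,z)$; the equi-chord-arc bound \eqref{equiCA} specialized to $|\lambda-\mu|=|a-b|=2$, giving $|z_a-z_b'|\geq\sqrt{\beta^2+4t^2c(\alpha)^2}/\mathcal{C}(c,z)$; and the Lipschitz estimate $|\partial_\alpha z_b(\alpha)-\partial_\alpha z_b(\beta)|\leq C|\beta|$ available since $z\in C_t^1 H^{k_\circ-2}\hookrightarrow C_t^1 W^{2,\infty}$ in one dimension once $k_\circ\geq 6$.

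The main obstacle is precisely this last uniform bound on the remaining integral, because of the near double singularity at $\beta=\alpha$: the factor $1/(z_b-z_b')$ is genuinely singular, whereas $1/(z_a-z_b')$ is only regularized by $tc(\alpha)$, and the linear vanishing of $\partial_\alpha z_b-\partial_\alpha z_b'$ only barely compensates one of the two. Feeding the three bounds into the integrand reduces it to $\int d\beta/\sqrt{\beta^2+4t^2c(\alpha)^2}$, which is uniformly bounded up to a harmless logarithmic factor in $tc$ (absorbed in the $O$-symbol, since $tc\log(1/tc)\to 0$ uniformly as $t\to 0$). Multiplying by the prefactor gives $I_1=O(tc)$, and together with $I_2$ this yields the claimed bound $E-B_a=O(t(c+|\partial_\alpha c|))$.
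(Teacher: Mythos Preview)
Your reduction $E-B_a=B_{b,b}-B_{a,b}$ and the treatment of the correction term $I_2$ via Lemma~\ref{lemma:residue} are exactly right and match the paper. The problem is the endgame on $I_1$.

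You bound the remaining integral by absolute values and arrive at $\int_\T d\beta/\sqrt{\beta^2+4t^2c(\alpha)^2}\sim\log(1/(tc))$, and then claim this log is ``absorbed in the $O$-symbol since $tc\log(1/tc)\to 0$''. That sentence is the gap: $tc\log(1/tc)\to 0$ only shows $o(1)$, not $O(tc)$. Indeed $tc\log(1/tc)/(tc)=\log(1/tc)\to\infty$ as $tc\to 0$, so your argument yields $I_1=O(tc\log(1/tc))$, which is strictly weaker than the stated $O(tc)$. The lemma as written is therefore not proved.

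What is missing is one more order of Taylor expansion and an appeal to the cancellation in Lemma~\ref{lemma:residue}. The paper first rewrites the difference of reciprocals via the fundamental theorem of calculus as $-atc\int_{-1}^{1}\tau^\perp/(z_\lambda-z_b')^2\,d\lambda$, so that the kernel is a perfect square. Then it adds and subtracts $\beta\partial_\alpha^2 z_b$ inside the numerator. The commutator $\partial_\alpha z_b-\partial_\alpha z_b'-\beta\partial_\alpha^2 z_b=O(|\beta|^{1+\delta})$ beats the $|\beta|^{-2}$ denominator outright (no log). The leftover piece is exactly
\[
\partial_\alpha^2 z_b\Big(\tau^\perp\!\int_{-1}^{1}C^2_{\lambda,b}\,d\lambda\Big)_1,
\qquad
C^2_{\lambda,b}=\int_\T\frac{\beta}{(z_\lambda-z_b')^2}\,d\beta,
\]
and this is uniformly bounded by Lemma~\ref{lemma:residue}, whose proof uses the argument principle rather than absolute values. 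That cancellation is what kills the logarithm; your crude estimate throws it away. If you prefer to keep your product form $1/((z_a-z_b')(z_b-z_b'))$ instead of the $\lambda$-integral, you can still add and subtract $\beta\partial_\alpha^2 z_b$; the remaining ``bad'' integral then has to be reduced (e.g.\ by partial fractions or by the same FTC step) to quantities of the form $C^k_{\lambda,\mu}$ before Lemma~\ref{lemma:residue} applies.
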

\begin{proof}
Notice that $E=B=B_+=B_-$ for $tc(\alpha)=0$. Consider now $tc(\alpha)\neq 0$. 
Recall from \eqref{Bab:split} that $B_{a,b}$ is split into
$$B_{a,b}=
A_{a,b}
-i(a-b)t\partial_{\alpha}(c\tau)\frac{1}{2\pi}(C_{a,b}^1)_1,$$
where $C_{a,b}^1$ is given in \eqref{C} and 
$$
A_{a,b}:=\frac{1}{2\pi}\int_{\T}\left(\frac{1}{z_{a}-z_{b}'}\right)_1(\partial_{\alpha}z_{b}-\partial_{\alpha}z_{b}')\dif\beta.
$$
Then, for $b=-a$, we have
\begin{equation}\label{eq:E-Ba}
E-B_a
=B_{b,b}-B_{a,b}
=B_{b,b}-A_{a,b}+iat\partial_{\alpha}(c\tau) \frac{1}{\pi}(C_{a,b}^1)_1.
\end{equation}
The last term is $O(t|\partial_{\alpha}(c\tau)|)$ by Lemma \ref{lemma:residue}.
For the first term, the fundamental theorem of calculus yields
\begin{equation}\label{ftc}
\begin{split}
B_{b,b}-A_{a,b}
&=\frac{1}{2\pi}\int_{\T}
\left(\frac{1}{z_b-z_b'}-\frac{1}{z_a-z_b'}\right)_1
(\partial_{\alpha}z_{b}-\partial_{\alpha}z_{b}')\dif\beta\\
&=-\frac{a}{2\pi}tc\int_{\T}\int_{-1}^{1}\left(\frac{\tau^\perp}{(z_{\lambda}-z_{b}')^2}\right)_1(\partial_{\alpha}z_{b}-\partial_{\alpha}z_{b}')\dif\lambda\dif\beta.
\end{split}
\end{equation}
Let us check that we can apply the Fubini's theorem.
By using the regularity conditions \eqref{hyp:c}-\eqref{equiCA}, then
\eqref{ftc} can be bounded by
\begin{equation}\label{fubini}
tc\int_{0}^{\ell_\circ/2}\int_{0}^{1}\frac{\beta}{\beta^2+((1-\lambda)tc)^2}\dif\lambda\dif\beta
=tc\int_0^1\int_0^{\frac{\ell_\circ/2}{(1-\lambda)tc}}\frac{\beta}{1+\beta^2}\dif\beta\dif\lambda<\infty.
\end{equation}
Hence, the Fubini's theorem allows to interchange the order of integration of $\lambda$ and $\beta$ in \eqref{ftc}.
Then, by adding and subtracting $\beta\partial_{\alpha}^2z_b$ in \eqref{ftc}, we get
\begin{equation}\label{E-Ba}
\begin{split}
B_{b,b}-A_{a,b}
=&\,-\frac{a}{2\pi}tc\int_{-1}^{1}\int_{\T}\left(\frac{\tau^\perp}{(z_{\lambda}-z_{b}')^2}\right)_1(\partial_{\alpha}z_{b}-\partial_{\alpha}z_{b}'-\beta\partial_{\alpha}^2z_{b})\dif\beta\dif\lambda\\
&\,-\frac{a}{2\pi}tc\partial_{\alpha}^2z_{b}\left(\tau^\perp\int_{-1}^{1}C_{\lambda,b}^2\dif\lambda\right)_1,
\end{split}
\end{equation}
where, by applying the Taylor's theorem on the first term and Lemma \ref{lemma:residue} on the second one, we see that \eqref{E-Ba} is $O(tc)$ in terms of $\mathcal{C}(c,z)$ and $\|\partial_{\alpha}^2z_b\|_{C_tC^{0,\delta}}$.
\end{proof}

The next lemma shows that indeed $\kappa=\partial_t(E-B)|_{t=0}$.

\begin{lemma}\label{lemma:E-B}
For $0\leq t\leq T$ it holds that
$$E-B-t\kappa=O(t^2(c+|\partial_{\alpha}c|)).$$
\end{lemma}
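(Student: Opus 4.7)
The plan is to refine Lemma \ref{lemma:E-Ba} by one order in $t$. Start by averaging the identity from \eqref{eq:E-Ba} over $a=\pm$ to obtain
\[
E-B=\tfrac{1}{2}\sum_{a=\pm}\bigl(B_{-a,-a}-A_{a,-a}\bigr)+\tfrac{it\partial_{\alpha}(c\tau)}{2\pi}\bigl[(C^{1}_{+,-})_1-(C^{1}_{-,+})_1\bigr].
\]
Then perform a one-order Taylor expansion of each summand in $t$ about $t=0$, identify the linear coefficient with $\kappa$ as defined in \eqref{kappaD}, and control the remainder by $O(t^{2}(c+|\partial_{\alpha}c|))$.

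For the residue piece, apply the recurrence \eqref{Residue:recurrence} with $k=1$:
\[
C^{1}_{\lambda,\mu}=\tfrac{1}{\partial_{\alpha}z_{\mu}}\!\!\int_{\T}\!\!\tfrac{\partial_{\alpha}z_{\mu}-\partial_{\alpha}z_{\mu}'}{z_{\lambda}-z_{\mu}'}\dif\beta+\tfrac{(1+\sgn(\lambda-\mu))\pi i}{\partial_{\alpha}z_{\mu}}.
\]
At $t=0$ both $z_\lambda$ and $z_\mu$ collapse to $z^\circ$, so the integral piece is independent of $\mu$ and cancels in the antisymmetric difference $C^{1}_{+,-}-C^{1}_{-,+}$; only the Cauchy term survives, giving a clean $t=0$ value proportional to $(\partial_\alpha z^\circ)^{-1}$. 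This produces the imaginary $i(\tfrac{1}{\partial_{\alpha}z^{\circ}})_{2}\partial_{\alpha}(c\tau)$ piece of $\kappa$. A first-order Taylor expansion of both the integral and the $\partial_{\alpha}z_\mu$ prefactor in $t$ (using $z_\lambda-z_\mu=(\mu-\lambda)tc\tau^\perp$) gives an $O(t)$ correction to $C^{1}_{a,-a}$, so the total error coming from this second piece is $O(t^{2}|\partial_{\alpha}(c\tau)|)=O(t^{2}(c+|\partial_{\alpha}c|))$.

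For the difference $B_{-a,-a}-A_{a,-a}$, start from the explicit formula \eqref{E-Ba} and expand the integrands one further order in $t$. Writing $z_\lambda-z_b'=\delta_\beta z^\circ+O(t)$ and $\partial_\alpha z_b-\partial_\alpha z_b'-\beta\partial_{\alpha}^{2}z_b=\delta_\beta\partial_\alpha z^\circ-\beta\partial_{\alpha}^{2}z^\circ+O(t)$ in the first integral of \eqref{E-Ba}, and summing against $-a/2\pi$ with $b=-a$, the $t=0$ leading contribution cancels by symmetry in $b$; the next order produces a genuinely $a$-odd term whose $\sum_a$ survives. Similarly for the second integral of \eqref{E-Ba}, Lemma \ref{lemma:residue} with $k=2$ and the recurrence \eqref{Residue:recurrence} show that only the $\sgn(\lambda-\mu)\pi i /(\partial_\alpha z^\circ)^2$ piece contributes to the antisymmetric sum, producing precisely $\partial_{\alpha}^{2}z^{\circ}(\tfrac{c\tau}{(\partial_{\alpha}z^{\circ})^{2}})_{1}$ times a constant. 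Adding both contributions reproduces $t\kappa$ with $\kappa$ as in \eqref{kappaD}.

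The remainder estimate $O(t^{2}(c+|\partial_{\alpha}c|))$ is obtained by the same Fubini manipulation as in \eqref{fubini}, with one extra factor of $t$ gained from the additional Taylor term; the $|\partial_\alpha c|$ enters when differentiating $c\tau^\perp$ in the $t$-expansion of $\partial_\alpha z_b-\partial_\alpha z_b'$. The main technical obstacle is careful bookkeeping of the residue contributions: because $z_\lambda(0)=z_\mu(0)=z^\circ$ is diagonal, $C^{k}_{\lambda,\mu}$ is only defined as a limit and must be handled exclusively through the recurrence \eqref{Residue:recurrence} rather than naive substitution; this is what singles out the correct $\kappa$ from the antisymmetric combinations and makes all lower-order terms cancel.
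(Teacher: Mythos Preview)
Your proposal is correct and follows essentially the same approach as the paper: average \eqref{eq:E-Ba} over $a=\pm$, push the expansion one order further in $t$, identify the linear coefficient with $\kappa$ via the residue recurrence \eqref{Residue:recurrence}, and bound the remainder by $O(t^{2}(c+|\partial_{\alpha}c|))$. The only difference is that where you speak of a ``Taylor expansion in $t$'' of the integrals in \eqref{E-Ba}, the paper makes this step concrete by applying the fundamental theorem of calculus a second time---now in a $\mu$-variable interpolating between $b=+$ and $b=-$, exactly paralleling \eqref{ftc}\eqref{fubini}---which produces an explicit factor $tc'\tau'$ and reduces the remainder estimate to Lemma~\ref{lemma:residue} with $k=3$.
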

\begin{proof}
Notice that
$$E-B=\tfrac{1}{2}(E-B_-)+\tfrac{1}{2}(E-B_+),$$
with $E-B_a$ given in \eqref{eq:E-Ba}.
We start with some auxiliary computations.
By combining \eqref{Residue:recurrence} and \eqref{E-Ba} we get, for $b=-a$,
\begin{align*}
B_{b,b}-A_{a,b}
=&\,-\frac{a}{2\pi}tc\int_{-1}^{1}\int_{\T}\left(\frac{\tau^\perp}{(z_{\lambda}-z_{b}')^2}\right)_1(\partial_{\alpha}z_{b}-\partial_{\alpha}z_{b}'-\beta\partial_{\alpha}^2z_{b})\dif\beta\dif\lambda\\
&\,-\frac{a}{2\pi}tc\partial_{\alpha}^2z_{b}\sum_{j=0,1}\left(\frac{\tau^\perp}{(\partial_{\alpha}z_{b})^{2-j}}\int_{-1}^{1}\left(\int_{\T}\beta^{j}\frac{\partial_{\alpha}z_{b}-\partial_{\alpha}z_{b}'}{(z_{\lambda}-z_{b}')^{j+1}}\dif\beta+S_{\lambda,b}^{j}\right)\dif\lambda\right)_1\\
&\,+\frac{a}{2}tc\partial_{\alpha}^2z_{b}\left(\frac{\tau}{(\partial_{\alpha}z_{b})^{2}}\int_{-1}^{1}(1+\sgn(\lambda-b))\dif\lambda\right)_1,
\end{align*}
where $S_{\lambda,b}^j$ is given in \eqref{S}.
Notice that
$$\int_{-1}^{1}(1+\sgn(\lambda-b))\dif\lambda=2(1+a).$$
Furthermore, it is easy to see that $S_{\lambda,b}^{j}-S_{\lambda,0}^{j}=O(t)$ by the regularity conditions \eqref{hyp:c}-\eqref{equiCA}. Then, by writing $\partial_{\alpha}z_b=\partial_{\alpha}z-itb\partial_{\alpha}(c\tau)$, it follows that
\begin{align*}
B_{b,b}-A_{a,b}
=&\,-\frac{a}{2\pi}tc\int_{-1}^{1}\int_{\T}\left(\frac{\tau^\perp}{(z_{\lambda}-z_{b}')^2}\right)_1(\partial_{\alpha}z-\partial_{\alpha}z'-\beta\partial_{\alpha}^2z)\dif\beta\dif\lambda\\
&\,-\frac{a}{2\pi}tc\partial_{\alpha}^2z\sum_{j=0,1}\left(\frac{\tau^\perp}{(\partial_{\alpha}z)^{2-j}}\int_{-1}^{1}\left(\int_{\T}\beta^{j}\frac{\partial_{\alpha}z-\partial_{\alpha}z'}{(z_{\lambda}-z_{b}')^{j+1}}\dif\beta+S_{\lambda,0}^{j}\right)\dif\lambda\right)_1\\
&\,+(1+a)tc\partial_{\alpha}^2z\left(\frac{\tau}{(\partial_{\alpha}z)^{2}}\right)_1\\
&\,+O(t^2c).
\end{align*}
Therefore, analogously to \eqref{ftc}\eqref{fubini}, the fundamental theorem of calculus jointly with the Fubini's theorem yield (recall \eqref{eq:E-Ba})
\begin{align*}
E-B-t\kappa
&=\tfrac{1}{2}(E-B_-)+\tfrac{1}{2}(E-B_+)-t\kappa\\
&=\frac{1}{\pi}t^2c\int_{-1}^{1}\int_{-1}^{1}\int_{\T}\left(\frac{\tau c'\tau'}{(z_{\lambda}-z_{\mu}')^3}\right)_1(\partial_{\alpha}z-\partial_{\alpha}z'-\beta\partial_{\alpha}^2z)\dif\beta\dif\lambda\dif\mu&&=:I_1\\
&-\frac{1}{2\pi}t^2c\partial_{\alpha}^2z\sum_{j=0,1}\left(\frac{(j+1)\tau}{(\partial_{\alpha}z)^{2-j}}\int_{-1}^{1}\int_{-1}^{1}\int_{\T}c'\tau'\beta^{j}\frac{\partial_{\alpha}z-\partial_{\alpha}z'}{(z_{\lambda}-z_{\mu}')^{j+2}}\dif\beta\dif\lambda\dif\mu\right)_1 &&=:I_2\\
&-it\partial_{\alpha}(c\tau)\frac{1}{\pi}(C_{-,+}^1-C_{+,-}^1)_1-2it\partial_{\alpha}(c\tau)\left(\frac{1}{\partial_{\alpha}z^\circ}\right)_2&&=:I_3\\
&+2tc\partial_{\alpha}^2z\left(\frac{\tau}{(\partial_{\alpha}z)^{2}}\right)_1
-2tc\partial_{\alpha}^2z^\circ\left(\frac{\tau}{(\partial_{\alpha}z^\circ)^{2}}\right)_1&&=:I_4\\
&+O(t^2c).
\end{align*}
For $I_1$, by adding and subtracting $c\tau$ and $\tfrac{1}{2}\beta^2\partial_{\alpha}^3z$, we split it into
\begin{align*}
I_1&=
-\frac{1}{2\pi}(tc)^2\partial_{\alpha}^3z\left(\tau^2\int_{-1}^{1}\int_{-1}^{1}C_{\lambda,\mu}^3\dif\lambda\dif\mu\right)_1
+\text{commutators},
\end{align*}
where
\begin{align*}
\text{commutators} &=\frac{1}{\pi}t^2c\int_{-1}^{1}\int_{-1}^{1}\int_{\T}\left(\frac{\tau ((c\tau)'-c\tau)}{(z_{\lambda}-z_{\mu}')^3}\right)_1(\partial_{\alpha}z-\partial_{\alpha}z'-\beta\partial_{\alpha}^2z)\dif\beta\dif\lambda\dif\mu\\
&+\frac{1}{\pi}(tc)^2\int_{-1}^{1}\int_{-1}^{1}\int_{\T}\left(\frac{\tau^2}{(z_{\lambda}-z_{\mu}')^3}\right)_1(\partial_{\alpha}z-\partial_{\alpha}z'-\beta\partial_{\alpha}^2z+\tfrac{1}{2}\beta^2\partial_{\alpha}^3z)\dif\beta\dif\lambda\dif\mu.
\end{align*}
The first term of $I_1$ is $O((tc)^2)$ by Lemma \ref{lemma:residue} and the commutators are $O(t^2c)$ in terms of $\|z\|_{C_tC^{3,\delta}}\lesssim\|z\|_{C_tH^4}$. Similarly, by adding and subtracting $c\tau$ and $\beta\partial_{\alpha}^2z$ for $I_2$, we gain commutators of order $O(t^2c)$, while the remaining term reads as
$$
-\frac{1}{2\pi}(tc)^2\partial_{\alpha}^2z\sum_{j=0,1}\left(\frac{(j+1)\tau^2\partial_{\alpha}^2z}{(\partial_{\alpha}z)^{2-j}}\int_{-1}^{1}\int_{-1}^{1}C_{\lambda,\mu}^{j+2}\dif\lambda\dif\mu\right)_1=O((tc)^2),
$$
where we have applied Lemma \ref{lemma:residue}.
For $I_3$, \eqref{Residue:recurrence} yields
$$C_{\lambda,\mu}^1
=
\frac{1}{\partial_{\alpha}z_{\mu}}\left(\int_{\T}\frac{\partial_{\alpha}z_{\mu}-\partial_{\alpha}z_{\mu}'}{z_{\lambda}-z_{\mu}'}\dif\beta
+(1+\sgn(\lambda-\mu))\pi i\right),$$
and thus
$$\frac{1}{\pi}(C_{-,+}^1
-C_{+,-}^1)_1
=O(t)-2\left(\frac{1}{\partial_{\alpha}z_\mu}\right)_2.$$
Finally, a direct computation shows that
$$\left(\frac{1}{\partial_{\alpha}z_\mu}\right)_2-\left(\frac{1}{\partial_{\alpha}z^\circ}\right)_2=O(t),$$
and also for $I_4$
$$\partial_{\alpha}^2z\left(\frac{ \tau}{(\partial_{\alpha}z)^2}\right)_1
-\partial_{\alpha}^2z^\circ\left(\frac{\tau}{(\partial_{\alpha}z^\circ)^2}\right)_1
=O(t),$$
in terms of $\|\partial_t z\|_{C_tC^{2,0}}\lesssim\|\partial_t z\|_{C_tH^3}$.
\end{proof}

The next lemmas deal with $D$, $h$ and $E^{1)}$.
\begin{lemma}\label{lemma:D}
For $0\leq t\leq T$ it holds that
$$D-D^{(0)}\partial_{\alpha}z\cdot\partial_{\alpha}z^\circ
=O(t).$$
\end{lemma}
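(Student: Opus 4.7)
The proof naturally splits into two pieces.  Since $\tau=\partial_\alpha z^\circ$ and $z^\circ$ is parametrised by arc-length ($|\partial_\alpha z^\circ|=1$), we can write
\begin{align*}
D-D^{(0)}\partial_\alpha z\cdot\partial_\alpha z^\circ
&=-\tfrac{1}{2}\sum_{a=\pm}aB_a-i(c\tau+\tfrac{1}{2})\bigl(1-\partial_\alpha z\cdot\partial_\alpha z^\circ\bigr)\\
&=\tfrac{1}{2}(B_{-}-B_{+})\;+\;i(c\tau+\tfrac{1}{2})(\partial_\alpha z-\partial_\alpha z^\circ)\cdot\partial_\alpha z^\circ,
\end{align*}
and the plan is to bound each term separately.

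The second term is handled directly: by the regularity assumption $\partial_t z\in C_tH^{k_\circ-3}$ together with $k_\circ\geq 6$ and Sobolev embedding, we have $\|\partial_\alpha z(t)-\partial_\alpha z^\circ\|_{L^\infty}\leq t\,\|\partial_t\partial_\alpha z\|_{C_tL^\infty}=O(t)$; combined with the uniform bound on $c\tau+\tfrac12$, this gives the desired $O(t)$ control.

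For the first term, write $B_\pm=B_{\pm,+}+B_{\pm,-}$ and, for each $b=\pm$, use the fundamental theorem of calculus in the opening parameter: introducing
$$\tilde B_{\lambda,b}(t,\alpha):=\frac{1}{2\pi}\int\left(\frac{1}{z_\lambda-z_b'}\right)_1(\partial_\alpha z_\lambda-\partial_\alpha z_b')\dif\beta,\qquad \lambda\in[-1,1],$$
we get $B_{+,b}-B_{-,b}=\int_{-1}^{1}\partial_\lambda\tilde B_{\lambda,b}\dif\lambda$, where $\partial_\lambda z_\lambda=-tc\tau^\perp$ and $\partial_\alpha\partial_\lambda z_\lambda=-t\partial_\alpha(c\tau^\perp)$ produce an overall factor of $t$.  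After differentiating and adding/subtracting $\beta\partial_\alpha^2 z_\lambda$ in the singular term (exactly as in the passage from \eqref{ftc} to \eqref{E-Ba} in the proof of Lemma~\ref{lemma:E-Ba}), we may justify a Fubini interchange using \eqref{hyp:c}--\eqref{equiCA} and reduce the estimate to bounded quantities of the form $C^{k}_{\lambda,b}$ and $S^{j}_{\lambda,b}$; by Lemma~\ref{lemma:residue} these are uniformly bounded on $[-1,1]^2\times[0,T]\times\{c(\alpha)>0\}$, and on $c(\alpha)=0$ there is nothing to prove since $z_+\equiv z_-\equiv z$. This yields $B_{+,b}-B_{-,b}=O(t)$ in terms of $\mathcal C(c,z)$ and $\|z\|_{C_tH^{4}}$, and summing in $b$ gives $B_{+}-B_{-}=O(t)$ as needed.

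The main obstacle is of the same nature as in Lemma~\ref{lemma:E-Ba}: the naive difference $(\tfrac{1}{z_+-z_b'})_1-(\tfrac{1}{z_--z_b'})_1$ has a $1/|\beta|^2$ singularity whose product with $\partial_\alpha z_\lambda-\partial_\alpha z_b'=O(|\beta|)$ is not integrable, so Fubini is not immediate and one must insert the $\beta\partial_\alpha^2 z_\lambda$ correction (absorbing the correction into a $C^2_{\lambda,b}$-type integral controlled by Lemma~\ref{lemma:residue}).  Apart from this already familiar manoeuvre, the proof is a routine Taylor expansion at $t=0$, reflecting that at $t=0$ every $B_{a,b}$ degenerates to the classical Muskat operator on $z^\circ$, independently of $a$.
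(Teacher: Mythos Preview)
Your argument is correct, but it takes a longer path than necessary. For the second term $i(c\tau+\tfrac12)(\partial_\alpha z-\partial_\alpha z^\circ)\cdot\partial_\alpha z^\circ$ your treatment coincides with the paper's (which simply records $\partial_\alpha z\cdot\partial_\alpha z^\circ=1+O(t)$). For the first term $B_+-B_-$, however, you essentially reprove Lemma~\ref{lemma:E-Ba} from scratch: you interpolate $\tilde B_{\lambda,b}$ in the opening parameter, differentiate, justify Fubini, insert the $\beta\partial_\alpha^2 z_\lambda$ correction and invoke Lemma~\ref{lemma:residue}. This works, but the paper avoids all of it by the telescoping identity
\[
B_+-B_- = (B_+-E)+(E-B_-),
\]
and then applies Lemma~\ref{lemma:E-Ba} (which already gives $E-B_a=O(t(c+|\partial_\alpha c|))$) to each piece. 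The gain of the paper's approach is that the delicate Fubini/add--subtract manoeuvre has already been carried out once in Lemma~\ref{lemma:E-Ba} and need not be repeated; your approach is self-contained but duplicates that work.
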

\begin{proof}
Recall that
$D=-\frac{1}{2}(B_+-B_-)+D^{(0)}$.
Then, the statement follows from Lemma \ref{lemma:E-Ba} since
$$
B_+-B_-
=(B_+-E)+(E-B_-)=O(t),
$$
and using that $\partial_{\alpha}z\cdot\partial_{\alpha}z^\circ=1+O(t)$.
\end{proof}

\begin{lemma}\label{lemma:E1}
For $0\leq t\leq T$ it holds that
$$h,E^{1)}-E=O(t^2).$$
\end{lemma}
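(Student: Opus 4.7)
The plan is to prove the two statements via careful Taylor expansion in $t$ around $t=0$, proceeding in a bootstrap fashion that avoids circularity. I first note that $E|_{t=0}=E^{(0)}$ holds trivially from $z_b(0)=z^\circ$, while $\partial_t E|_{t=0}=E^{(1)}$ follows by differentiating the defining integral under the integral sign, once we know $\partial_t z_b|_{t=0}=z_b^{(1)}=E^{(0)}-bc\tau^\perp$. The latter is read off from the ODE \eqref{eq:z} at $t=0$: one has $\partial_t z|_{t=0}=F(0,z^\circ,z^\circ)=E^{(0)}-ih(0)\psi_1\partial_\alpha z^\circ$, and inspecting $H$ at $t=0$ shows that each of its three summands vanishes (the first two because $E(0)=B(0)=E^{(0)}=E^{1)}|_{t=0}$, the third because of the explicit factor of $t$), so $h(0)=0$.

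Next, I establish the rough Taylor remainder $z(t)-z^\circ-t\partial_t z|_{t=0}=O(t^2)$ in $C^{1,\delta}$. Since this remainder equals $\int_0^t(\partial_t z(s)-\partial_t z(0))\,ds$, it suffices to show $F(s,z^\circ,z(s))-F(0,z^\circ,z^\circ)=O(s)$ in $C^{1,\delta}$. Each term of $F$ is examined using the preceding lemmas: continuity of $E$ in $t$ (from $z\in C^1_tH^{k_\circ-2}$) gives $E(s)-E(0)=O(s)$, the polynomial dependence of $E^{1)}$ and the factors $s\kappa$, $sD^{(0)}\partial_\alpha(c\tau)$ are $O(s)$ by inspection, while $h(s)-h(0)=h(s)=O(s)$ follows from the crude observation that each term of $H$ is $O(s)$; here I use the trivial bound $E^{1)}-E=O(t)$ coming from $E^{1)}|_{t=0}=E|_{t=0}$ together with $C^1_t$-continuity of $E$.

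With $R_b(t):=z_b(t)-z^\circ-tz_b^{(1)}=O(t^2)$ in $C^{1,\delta}$ in hand, I prove $E^{1)}-E=O(t^2)$ by substituting the expansion $z_b=z^\circ+tz_b^{(1)}+R_b$ into the defining integral of $E$. Expanding the kernel $(z_b(t,\alpha)-z_b(t,\beta))^{-1}$ to second order around $(z^\circ(\alpha)-z^\circ(\beta))^{-1}$ and the difference $\partial_\alpha z_b(t,\alpha)-\partial_\alpha z_b(t,\beta)$ to first order around $\partial_\alpha z^\circ(\alpha)-\partial_\alpha z^\circ(\beta)$, the constant and linear-in-$t$ coefficients of the product assemble into $E^{(0)}+tE^{(1)}$ precisely (this is the algebraic content of Step~1), while every remaining term carries an explicit factor $O(t^2)$ multiplying an integral of Cauchy-type with kernels of the schematic form $(z^\circ-z^{\circ\prime})^{-2}(z_b-z_b')^{-1}$. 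Each such integral is convergent thanks to the equi-chord-arc condition \eqref{equiCA}, the bound $\mathcal{C}(z^\circ)<\infty$, and the regularity $z^\circ, z_b^{(1)}\in C^{1,\delta}$ (obtained via Sobolev embedding from $z^\circ\in H^{k_\circ}$, $k_\circ\geq 6$).

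Finally, for $h=O(t^2)$: the denominator in \eqref{mean} equals $\int_{\alpha_1}^{\alpha_2}\psi_1\partial_\alpha z\cdot\partial_\alpha z^\circ\,d\alpha$, bounded below uniformly for small $t$ by $\tfrac12\int_{\alpha_1}^{\alpha_2}\psi_1\,d\alpha>0$, since $\partial_\alpha z\cdot\partial_\alpha z^\circ=|\partial_\alpha z^\circ|^2+O(t)=1+O(t)$. The numerator $\int_{\alpha_1}^{\alpha_2}H\,d\alpha$ splits into three pieces that are $O(t^2)$ by Lemma~\ref{lemma:E-B}, by the previous paragraph, and by Lemma~\ref{lemma:D} respectively (with the explicit factor of $t$ in front in the last case). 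Taking the quotient gives $h=O(t^2)$. The main obstacle is the expansion in the third paragraph: one must keep track of all second-order terms without introducing uncontrollable singularities, for which the chord-arc condition \eqref{equiCA} and the $C^{2,\delta}$ regularity of $z$ (inherited from $z\in C_tH^{k_\circ-2}$ via Sobolev embedding) are essential, very much in the spirit of the technical estimates in Lemmas~\ref{lemma:residue}--\ref{lemma:E-B}.
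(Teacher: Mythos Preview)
Your proposal is correct and follows the same overall bootstrap structure as the paper: first establish $h(0)=0$ and $\partial_t z|_{t=0}=E^{(0)}$ so that $\partial_t z_b|_{t=0}=z_b^{(1)}$; then obtain the crude bound $h=O(t)$; then upgrade to $E-E^{1)}=O(t^2)$; and finally deduce $h=O(t^2)$ from Lemmas~\ref{lemma:E-B} and~\ref{lemma:D}. The difference lies in how you execute the middle step. The paper differentiates $E$ once in time to get the explicit expression~\eqref{Et} for $\partial_tE$, observes that $\partial_tE-E^{(1)}=O(t)$ (since $\partial_t z_b-z_b^{(1)}=F-E^{(0)}=O(t)$), and then integrates via
\[
E-E^{1)}=\int_0^t\bigl(\partial_tE(s)-E^{(1)}\bigr)\dif s=O(t^2).
\]
You instead substitute the Taylor expansion $z_b=z^\circ+tz_b^{(1)}+R_b$ with $R_b=O(t^2)$ directly into the defining integral and expand the kernel to second order. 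Both arguments rely on the same regularity (Sobolev embedding from $z\in C_tH^{k_\circ-2}$, $\partial_tz\in C_tH^{k_\circ-3}$, $k_\circ\geq 6$) and the chord-arc bound; the paper's route is slightly more economical because a single time-derivative of the kernel suffices, whereas your direct expansion requires tracking the second-order remainder $w^2/(1+w)$ in $(z_b-z_b')^{-1}$ explicitly. Your schematic remainder kernels $(z^\circ-z^{\circ\prime})^{-2}(z_b-z_b')^{-1}$ are indeed the ones that arise, and the integrals converge for the reasons you state. One small imprecision: when you write ``continuity of $E$ in $t$ \ldots\ gives $E(s)-E(0)=O(s)$'', you are of course using $C^1_t$-regularity of $E$ (via the mean value theorem), not mere continuity; this is exactly what the paper makes explicit in~\eqref{E-E0}.
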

\begin{proof}
Recall the definition of $E^{(0)}$ and $E^{(1)}$ in \eqref{E01}.
First of all we observe that, in analogy with the Hilbert transform, 
it follows that $E^{(0)}\in H^{k_\circ-1}$ and also $E^{(1)}\in H^{k_\circ-2}$ in terms of  the chord-arc constant $\mathcal{C}(z^\circ)$ and $\|z^\circ\|_{H^{k_\circ}}$. Similarly, by differentiating $E$ in time
\begin{equation}\label{Et}
\begin{split}
\partial_tE
&=\frac{1}{2\pi}\sum_{b=\pm}\int_{\T}\left(\frac{1}{z_b(t,\alpha)-z_b(t,\beta)}\right)_1(\partial_{\alpha}\partial_tz_b(t,\alpha)-\partial_{\alpha}\partial_tz_b(t,\beta))\dif\beta\\
&-\frac{1}{2\pi}\sum_{b=\pm}\int_{\T}\left(\frac{\partial_tz_b(t,\alpha)-\partial_tz_b(t,\beta)}{(z_b(t,\alpha)-z_b(t,\beta))^2}\right)_1(\partial_{\alpha}z_b(t,\alpha)-\partial_{\alpha}z_b(t,\beta))\dif\beta,
\end{split}
\end{equation}
Theorem \ref{thm:subsolution} provides enough   regularity (recall $k_\circ\geq 6$) and validity of  chord-arc condition to obtain that $\partial_t E\in C_t H^{k_\circ-4}$ as long as $0\leq t\leq T$.
In particular, since $E^{(0)}=E|_{t=0}$,
the mean value theorem yields
\begin{equation}\label{E-E0}
\|E-E^{(0)}\|_{C_tH^{k_\circ-4}}=O(t).
\end{equation}
Hence, recalling the definition of
$h$ \eqref{mean} together with Lemmas \ref{lemma:E-B} and \ref{lemma:D}, it follows that
$h=O(t)$ as well. 
Then, by writing 
$$F-E^{(0)}
=\psi_0(E-E^{(0)})+t\psi_1 E^{(1)}-(t\kappa+i(tD^{(0)}\cdot\partial_{\alpha}(c\tau)+h\psi_1)\partial_{\alpha}z^{\circ}),$$
it follows from \eqref{E-E0} and the regularity of the remaining terms that
$
\|F-E^{(0)}\|_{C_tH^{k_\circ-4}}=O(t).
$
As a result from  \eqref{eq:z}, \eqref{E01} and \eqref{Et}, we have $\partial_t E-E^{(1)}=O(t)$. Notice this implies $E^{(1)}=\pa_t E|_{t=0}$. Therefore, by applying the fundamental theorem of calculus
$$E-E^{1)}=\int_0^t(\partial_t E(s)-E^{(1)})\dif s,$$
we get $E-E^{1)}= O(t^2)$. Finally,  this is enough to update the estimate for $h$ to $O(t^2)$ as well.
\end{proof}

\begin{proof}[Proof of Theorem \ref{thm:subsolution}]
\textit{Proof of \eqref{bcond:1}}. On $tc(\alpha)=0$ we directly have that 
$\partial_tz=E=B.$ \\
\textit{Proof of \eqref{cond:z}}. Consider now $tc(\alpha)>0$.
For \eqref{cond:z:1},  the expression \eqref{cond:z:1:1}  and a direct use of Lemmas \ref{lemma:E-Ba} and  \ref{lemma:E1} yield that
$$
\partial_tz-B_a=O(t).
$$
For \eqref{cond:z:2},   we use the expression \eqref{cond:z:2:1}. Then, Lemmas \ref{lemma:E-B}-\ref{lemma:E1} imply that
\begin{align*}
\left|\int_{\alpha_1}^{\alpha}(E-B-t\kappa)\cdot\partial_{\alpha}z^\perp\dif\alpha '\right|
&\lesssim
t^2\int_{\alpha_1}^{\alpha}(c+|\partial_{\alpha}c|)\dif\alpha ',\\
\left|\int_{\alpha_1}^{\alpha}\psi_1(E^{1)}-E)\cdot\partial_{\alpha}z^\perp\dif\alpha '\right|
&\lesssim t^2\int_{\alpha_1}^{\alpha}\psi_1\dif\alpha ',\\
\left|\int_{\alpha_1}^{\alpha}t(D-D^{(0)}\partial_{\alpha}z\cdot\partial_{\alpha}z^\circ)\cdot\partial_{\alpha}(c\tau)\dif\alpha '\right|
&\lesssim t^2\int_{\alpha_1}^{\alpha}(c+|\partial_{\alpha}c|)\dif\alpha',\\
\left|\int_{\alpha_1}^{\alpha}h\psi_1\partial_{\alpha}z\cdot\partial_{\alpha}z^\circ\dif\alpha '\right|
&\lesssim t^2\int_{\alpha_1}^{\alpha}\psi_1\dif\alpha ',
\end{align*}
uniformly on $c(\alpha)>0$. Firstly, recall that $\psi_1\lesssim c$ by Lemma~\ref {cor:monotone}.
Secondly,  if  $\alpha$ is closer to $\alpha_1$,
Corollary \ref{cor:monotone}  controls the integrals $\int_{\alpha_1}^\alpha c\dif\alpha'$ and $\int_{\alpha_1}^\alpha |\partial_\alpha c|\dif\alpha'$ in terms of $c(\alpha)$. Hence, the four terms above are  $O(t^2c(\alpha))$.

If $\alpha$ is closer to $\alpha_2$, Corollary \ref{cor:monotone} yields control on $\int_{\alpha}^{\alpha_2} c \dif\alpha'$ and
$\int_{\alpha}^{\alpha_2} |\partial_\alpha c| \dif\alpha'$ in terms of $c(\alpha)$. However,
 by  \eqref{mean}, it holds that $$\int_{\alpha_1}^{\alpha}(H-h\psi_1\partial_{\alpha}z\cdot\partial_{\alpha}z^\circ)\dif\alpha'=-\int_{\alpha}^{\alpha_2}(H-h\psi_1\partial_{\alpha}z\cdot\partial_{\alpha}z^\circ)\dif\alpha',$$
and thus we  can integrate on  $(\alpha,\alpha_2)$.  Therefore, for
 all $\alpha \in (\alpha_1,\alpha_2)$ the full expression is $O(t^2c(\alpha))$. Finally, by dividing by $tc(\alpha)$ we have proven that  \eqref{cond:z:2} holds. 
\end{proof}

\section{Existence of $z$}\label{sec:existence}

\begin{thm}\label{thm:existsz}
Let $z^\circ\in H^{k_\circ}(\T;\R^2)$ be a closed chord-arc curve with $k_\circ\geq 6$.
Then, there exists $z\in C_tH^{k_\circ-2}$ with $\partial_tz\in C_tH^{k_\circ-3}$ solving \eqref{eq:z} and satisfying  \eqref{hyp:z}-\eqref{equiCA} for some $0<T\ll 1$ depending on the chord-arc constant $\mathcal{C}(z^\circ)$ and the norm $\|z^\circ\|_{H^{k_\circ}}$.
\end{thm}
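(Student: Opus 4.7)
The plan is to solve \eqref{eq:z} by a mollification scheme combined with uniform a priori energy estimates in $H^{k_\circ-2}(\T;\R^2)$, adapting the local well-posedness machinery for the classical Muskat equation (\cite{CordobaGancedo07,CCG11}) to accommodate the cut-off $\psi_0$ and the various non-local source terms. Begin by fixing an open neighborhood $\mathcal{V}$ of $z^\circ$ in $H^{k_\circ-2}$ on which the chord-arc constant is at most $2\mathcal{C}(z^\circ)$ and $\partial_\alpha z$ is pointwise close to $\partial_\alpha z^\circ$ (possible since $H^{k_\circ-2}\hookrightarrow C^{2,\delta}$ for $k_\circ\geq 6$). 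Next, regularize every singular integral appearing in $F$ (those defining $E$, $E^{(0)}$, $E^{(1)}$ and the numerator of $h$) by convolving its kernel with a mollifier at scale $\epsilon>0$, thereby producing an operator $F_\epsilon$ that is locally Lipschitz from $\mathcal{V}$ into $H^{k_\circ-2}$. Picard iteration in this Banach space then yields approximate solutions $z_\epsilon\in C^1([0,T_\epsilon);H^{k_\circ-2})$, and continuity in $t$ keeps $z_\epsilon(t)\in\mathcal{V}$ and preserves the equi-conditions \eqref{Angle}\eqref{equiCA} on a short time interval.

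The heart of the argument is the uniform (in $\epsilon$) bound
$$
\frac{d}{dt}\|z_\epsilon(t)\|_{H^{k_\circ-2}}^2 \leq \mathcal{M}\bigl(\|z_\epsilon(t)\|_{H^{k_\circ-2}}\bigr),
$$
obtained by pairing $\partial_\alpha^{k_\circ-2}$ of \eqref{eq:z} with $\partial_\alpha^{k_\circ-2}z_\epsilon$ in $L^2$. The contributions split into four groups. First, the top-order part of $\langle\psi_0E,\,z_\epsilon\rangle_{H^{k_\circ-2}}$: since $E=B_{+,+}+B_{-,-}$, each $B_{b,b}$ is the classical Muskat operator evaluated on $z_b$, and Lemma \ref{lemma:stable} together with the closeness of $z_\pm$ to $z^\circ$ (for $z\in\mathcal{V}$ and $t$ small) guarantees that $(\partial_\alpha z_b)_1$ is bounded below by a positive constant on $\mathrm{supp}\,\psi_0$; the classical C\'ordoba-Gancedo commutator identity then extracts a dissipative term, morally of the form $-\sum_b\int\psi_0(\partial_\alpha z_b)_1|(-\Delta)^{1/4}\partial_\alpha^{k_\circ-2}z_b|^2\dif\alpha$, which has the correct sign and is discarded in the upper bound. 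Second, the commutators $[\partial_\alpha^{k_\circ-2},\psi_0]E$ are of strictly lower order and are controlled via \eqref{equiCA}. Third, the source terms $\psi_1E^{1)}$, $t\kappa$ and $tD^{(0)}\cdot\partial_\alpha(c\tau)\,\partial_\alpha z^\circ$ depend only on $(t,z^\circ)$ and admit $H^{k_\circ-2}$ bounds depending only on $\|z^\circ\|_{H^{k_\circ}}$ and $\mathcal{C}(z^\circ)$. Fourth, for $h\psi_1\partial_\alpha z^\circ$ one uses that $\partial_\alpha z\cdot\partial_\alpha z^\circ\geq\tfrac{1}{2}$ on $\mathcal{V}$, so the denominator in \eqref{mean} is bounded below by $\tfrac{1}{2}\int_{\alpha_1}^{\alpha_2}\psi_1\dif\alpha>0$, giving $|h|\lesssim 1$ and hence a uniform $H^{k_\circ-2}$ bound on $h\psi_1\partial_\alpha z^\circ$.

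Gronwall then delivers existence of $z_\epsilon$ on an interval $[0,T]$ independent of $\epsilon$, uniformly bounded in $C_tH^{k_\circ-2}$, and reading $\partial_tz_\epsilon$ off \eqref{eq:z} directly yields a uniform bound in $C_tH^{k_\circ-3}$. Aubin-Lions compactness then extracts a subsequence converging strongly in $C_tH^{k_\circ-3}$ to a limit $z$ which solves \eqref{eq:z} with the claimed regularity, and which inherits the chord-arc and angle conditions from $\mathcal{V}$. The principal obstacle is the top-order energy estimate for $\psi_0E$: quantifying carefully how the cut-off interacts with the non-local Muskat operator so that only the stable region $\mathrm{supp}\,\psi_0$ (where Lemma \ref{lemma:stable} applies) drives the leading order. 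The choice $E=B_{+,+}+B_{-,-}$ --- dropping the cross terms $B_{a,b}$ with $a\neq b$ --- is decisive here, since, as flagged in the introduction, it removes the singular $\partial_\alpha\log c$ factor and thereby allows the localized dissipation supplied by Lemma \ref{lemma:stable} to dominate all remaining non-local contributions.
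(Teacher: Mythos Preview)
Your proposal is correct and mirrors the paper's approach: regularization, uniform $H^{k_\circ-2}$ energy estimates (with the top-order term in $\psi_0E$ controlled via the stability $\sigma_b>0$ on $\mathrm{supp}\,\psi_0$ supplied by Lemma~\ref{lemma:stable}), Gronwall, and compactness. The only notable implementation differences are that the paper packages the geometric constants $\mathcal{A}(z)^{-1},\mathcal{C}(z),\mathcal{S}(z)^{-1}$ directly into the energy functional, and for the localized top-order term uses the C\'ordoba--C\'ordoba pointwise inequality $2f\cdot\Lambda f\geq\Lambda(|f|^2)$ (which, being pointwise, is exactly what allows one to multiply by the nonnegative weight $\psi_0\sigma_b$ before integrating) rather than extracting a literal $\int\psi_0\sigma_b|\Lambda^{1/2}\cdot|^2$ term---the latter is not available as written because $\Lambda$ does not commute with the cut-off.
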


\begin{rem}\label{rem:regularity}
The initial regularity required $k_\circ=6$ is due to the fact that the energy estimates are easier in $H^4$, some estimates in the proof of Lemmas \ref{lemma:E-B}-\ref{lemma:E1} and that the pseudo-interface loses two derivatives on the mixing region as in \cite{ForsterSzekelyhidi18,NoisetteSzekelyhidi20}.
\end{rem}

We split the proof of this theorem into two parts. Firstly, we obtain a priori energy estimates for the equation \eqref{eq:z}. Secondly, we explain briefly how \eqref{eq:z} is regularized in order to use the a priori estimates to show the existence of the desired solution $z$.	

\subsection{A priori energy estimates}

We will take our energy as
\begin{equation}\label{def:energy}
\mathcal{E}(z):=\|z\|_{H^{k_\circ-2}}^2
+\mathcal{A}(z)^{-1}
+\mathcal{C}(z)+\mathcal{S}(z)^{-1},
\end{equation}
where $\mathcal{A}(z)$, $\mathcal{C}(z)$ are the angle and the chord-arc constants given in \eqref{A}, \eqref{CA} respectively, and
$$\mathcal{S}(z):=\inf\{\sigma(\alpha)\,:\,\alpha\in\mathrm{supp}\,\psi_0\}$$
measures the RT-stability of $z$ on $\mathrm{supp}\,\psi_0$ (recall $\sigma=(\rho_+-\rho_-)\partial_{\alpha}z_1$ with $\rho_{\pm}=\pm 1$). Notice that $\mathcal{C}(z^\circ)<\infty$ by hypothesis and that $\mathcal{A}(z^\circ)=1$, $\mathcal{S}(z^\circ)\geq 2\eta(1-2s)>0$ by construction (recall Lemma \ref{lemma:stable}). It turns out that $\frac{d}{dt}(\mathcal{A}(z)^{-1}+\mathcal{C}(z)+\mathcal{S}(z)^{-1})$ is a lower order term w.r.t.~$\mathcal{E}(z)$. Analogous terms to $\mathcal{C}$ and $\mathcal{S}$ are rigorously analyzed in \cite{CCG11,CCFG13}. The term $\mathcal{A}$ can be treated with a similar technique.

Next, we analyze the Sobolev norm of \eqref{def:energy}.
Given $0\leq k\leq k_\circ-2$, we split
\begin{align*}
\frac{1}{2}\frac{d}{dt}\|\partial_{\alpha}^kz\|_{L^2}^2
&=\int_{\T}\partial_{\alpha}^kz\cdot\partial_{\alpha}^kF\dif\alpha\\
&=\int_{\T}\partial_{\alpha}^kz\cdot\partial_{\alpha}^k(\psi_0 E)\dif\alpha &&=:I\\
&\,+\int_{\T}\partial_{\alpha}^kz\cdot\partial_{\alpha}^k(\psi_1 E^{1)} -(t\kappa +itD^{(0)}\cdot\partial_{\alpha}(c\tau)\partial_{\alpha}z^{\circ}))\dif\alpha &&=:I_{\circ}\\
&\,-\int_{\T}\partial_{\alpha}^kz\cdot\partial_{\alpha}^k(i\psi_1\partial_{\alpha}z^{\circ})h\dif\alpha &&=:I_h
\end{align*}
We claim that the terms $I$, $I_\circ$ and $I_h$ are controlled from above in terms of $\|z^\circ\|_{H^{k_\circ}}$, $\|c\|_{H^{k_\circ-1}}$, $\|\psi_j\|_{H^{k_\circ-2}}$ for $j=0,1$, $\|z\|_{H^{k_\circ-2}}$, $\mathcal{A}(z)^{-1}$, $\mathcal{C}(z)$ and $\mathcal{S}(z)^{-1}$.

The term $I_\circ$ is controlled because $\psi_1$, $E^{1)}$, $\kappa$, $D^{(0)}$, $c$ and $\tau$ only depends on $z^\circ$. Indeed, it is clear that $\psi_1$  and $c$ are smooth by definition \eqref{partition}\eqref{c}, while $\tau$ and $D^{(0)}$ lose one derivative and $\kappa$ loses two (recall \eqref{t}\eqref{kappaD}). In analogy with the Hilbert transform (recall \eqref{E01}) it follows that $E^{(0)}$ loses one derivative and similarly $E^{(1)}$ loses two, namely $$\|E^{1)}\|_{H^{k_\circ-2}}\lesssim \|z^\circ\|_{H^{k_\circ}} +\|z^\circ\|_{H^{k_\circ}}^q,$$
in terms of $\mathcal{C}(z^\circ)$ and $\|c\|_{H^{k_\circ-1}}$, for some $q\in\N$.

The term $I_h$ is controlled because $h(t)$ does not depend on $\alpha$. As we saw in Lemmas \ref{lemma:E-Ba}-\ref{lemma:E1}, it follows that $\|h\|_{L^\infty}$ is controlled in terms of $\|z\|_{H^3}$, $\|z^\circ\|_{H^4}$, $\mathcal{A}(c,z)^{-1}$ and $\mathcal{C}(c,z)$. These quantities \eqref{Angle} and \eqref{equiCA} are controlled by $\mathcal{A}(z)^{-1}$ and $\mathcal{C}(z)$ for small times (cf.~Lemma \ref{geometriclemma}).

The term $I$ is expected to be controlled because at the linear level 
$$\psi_0E\sim
-\sum_{b=\pm}\psi_0\sigma_b\Lambda z_b,$$
where $\Lambda:=(-\Delta)^{1/2}$ and $\sigma_b=(\rho_+-\rho_-)(\partial_{\alpha}z_b)_1$,
which satisfies
$\psi_0\sigma_b\geq 0$
for small times as our energy controls $\mathcal{S}(z)^{-1}$ (see the next subsection for a detailed explanation).

\subsubsection{Analysis of $I$}\label{sec:EEI}

As we mentioned in the introduction, the analysis of $I$ is classical for curves in the fully stable regime. 
In our case, all the terms are treated classically but the most singular one which needs further analysis.
Let us present here the  estimate for the main term and discuss the rest in the appendix. We will assume w.l.o.g.~that $\T=[-\pi,\pi]$ ($\ell_\circ=2\pi$).\\

The most singular term in $I$, for each $b=\pm$, is (recall Sec.~\ref{sec:Intro} Notation)
$$J:=\frac{1}{2\pi}\int_{\T}\psi_0\partial_{\alpha}^kz\cdot\int_{\T}
\left(\frac{1}{\delta_\beta z_b}\right)_1
\partial_{\alpha}^{k+1}\delta_\beta z_b\dif\beta\dif\alpha.$$
Since $z_b=z-ibtc\tau$, the term with $\partial_{\alpha}^{k+1}\delta_\beta(c\tau)$ is controlled by $\mathcal{C}(z)$ and $\|c\tau\|_{H^{k+1}}$. For $\partial_{\alpha}^{k+1}\delta_\beta z$, by adding and subtracting a suitable term, we split it into
$$J_\sigma+J_\Phi
:=-\frac{1}{2}\int_{\T}\left(\frac{\psi_0}{\partial_{\alpha}z_b}\right)_1\partial_{\alpha}^kz\cdot\Lambda(
\partial_{\alpha}^{k}z)\dif\alpha
+\int_{\T}\psi_0\partial_{\alpha}^kz\cdot\int_{\T}\Phi_b
\partial_{\alpha}^{k+1}\delta_\beta z\dif\beta\dif\alpha
,$$
where $\Lambda=(-\Delta)^{1/2}:H^1\rightarrow L^2$ is the operator
$$\Lambda f(\alpha)
:=\frac{1}{2\pi}\mathrm{pv}\!\int_{\T}\frac{\partial_{\beta}f(\beta)}{\tan(\tfrac{\alpha-\beta}{2})}\dif\beta
=\frac{1}{4\pi}\mathrm{pv}\!\int_{\T}\frac{f(\alpha)-f(\beta)}{\sin^2(\tfrac{\alpha-\beta}{2})}\dif\beta,$$
and $\Phi_b$ is the bounded kernel
\begin{equation}\label{commutator}
\Phi_b(t,\alpha,\beta) :=\frac{1}{2\pi}\left(\frac{1}{\delta_\beta z_b}-\frac{1}{\partial_{\alpha}z_b(2\tan(\beta/2))}\right)_1.
\end{equation}
For $J_\sigma$
we proceed as follows. Recall that $\partial_{\alpha}z_1^\circ>0$ uniformly on $\mathrm{supp}\,\psi_0$. 
Indeed, this is why we have opened the mixing zone slightly inside the stable regime.
Our energy \eqref{def:energy} allows to assume that $(\partial_{\alpha}z_b)_1>0$ on $\mathrm{supp}\,\psi_0$ for later times. 
Hence, using the C\'ordoba-C\'ordoba pointwise inequality $2f\cdot\Lambda f\geq\Lambda(|f|^2)$ (see \cite{CordobaCordoba03}) and the fact that $\Lambda$ is self-adjoint, we deduce that
\begin{align*}
4J_\sigma
&\leq
-\int_{\T}\left(\frac{\psi_0}{\partial_{\alpha}z_b}\right)_1\Lambda(|\partial_{\alpha}^{k}z|^2)\dif\alpha\\
&=-\int_{\T}\Lambda\left(\frac{\psi_0}{\partial_{\alpha}z_b}\right)_1|\partial_{\alpha}^{k}z|^2\dif\alpha
\leq
\left\|\Lambda\left(\frac{\psi_0}{\partial_{\alpha}z_b}\right)_1\right\|_{L^\infty}
\|\partial_{\alpha}^kz\|_{L^2}^2,
\end{align*}
with the first term controlled by $\mathcal{C}(z)$ and $\|z_b\|_{C^{2,\delta}}$. 
Notice that, since the evolution of $(\partial_{\alpha}z_b)_1$ is controlled in terms of our energy $\mathcal{E}$, the time of positiveness of $(\partial_{\alpha}z_b)_1$ on $\mathrm{supp}\,\psi_0$ depends just on the initial data $z^\circ$. 

The estimate of $J_\Phi$ is classical. We present it here for completeness.
By writing  $\partial_{\alpha}^{k+1}\delta_\beta z_b
=\partial_{\alpha}\partial_{\alpha}^{k}z_b
+\partial_{\beta}\partial_{\alpha}^kz_b'$,
we split
\begin{align*}
J_\Phi
=&\,-\frac{1}{2}\int_{\T}|\partial_{\alpha}^kz|^2\partial_{\alpha}\left(\psi_0\int_{\T}\Phi_b\dif\beta\right)\dif\alpha && =:L_1\\
&\,-\int_{\T}\psi_0\partial_{\alpha}^kz\cdot\left(\!\int_{\T}
\partial_{\alpha}^{k}z_b'\partial_{\beta}\Phi_b\dif\beta\right)\dif\alpha && =:L_2
\end{align*}
where we have integrated by parts w.r.t.~$\alpha$ and $\beta$ for $L_1$ and $L_2$ respectively.
On the one hand, $L_1$ is controlled because we can write
$$\int_{\T}\Phi_b\dif\beta
=\frac{1}{2\pi}\left(\mathrm{pv}\!\int_{\T}\frac{\dif\beta}{\delta_\beta z_b}\right)_1
=\frac{1}{2\pi}\Bigg(\frac{1}{\partial_{\alpha}z_b}\bigg(
\int_{\T}\frac{\delta_\beta\partial_{\alpha}z_b}{\delta_\beta z_b}\dif\beta
+\underbrace{\mathrm{pv}\!\int_{\T}\frac{\partial_{\alpha}z_b'}{\delta_b z_b}\dif\beta}_{=\pi i}\bigg)\Bigg)_1,$$
which is bounded in $C^1$ by $\mathcal{C}(z)$ and $\|z_b\|_{C^{2,\delta}}$.
On the other hand, $L_2$ is controlled because $\partial_{\beta}\Phi_b$
is bounded in terms of $\mathcal{C}(z)$ and $\|z_b\|_{C^3}$.\\

The analysis of the remaining terms is standard (see e.g.~\cite{CCG11}). For completeness, we have presented a compact version in Lemma \ref{lemma:lot}.

\subsection{Regularization}

In order to be able to apply the Picard's theorem we regularize the equation \eqref{eq:z} via
\begin{equation}\label{eq:z:reg}
\begin{split}
\partial_tz&=\phi_{\varepsilon}*F_\varepsilon(t,z^\circ,z),\\
z|_{t=0}&=z^\circ,
\end{split}
\end{equation}
in terms of the parameter $\varepsilon>0$, where
$$F_\varepsilon:=\psi_0E_\varepsilon+\psi_1 E^{1)}
-(t\kappa+i(tD^{(0)}\cdot\partial_{\alpha}(c\tau)+h\psi_1)\partial_{\alpha}z^{\circ}),$$
which agrees with $F$ except that $E$ has been replaced by
$$E_\varepsilon(t,\alpha)
:=\frac{1}{2\pi}\sum_{b=\pm}\int_{\T}\left(\frac{1}{\delta_\beta z_{b}}\right)_1\partial_{\alpha}\delta_\beta(\phi_{\varepsilon}*z_{b})\dif\beta.$$

Let us fix the open set where the Picard's theorem is applied. Firstly, let $O_k$ be the open subset of $H^k$ formed by chord-arc curves
$$O_k:=\{z\in H^k(\T;\R^2)\,:\,\mathcal{C}(z)<\infty\}.$$
Secondly, given $z^\circ\in O_{k_\circ}$ and $k\leq k_\circ$, we define the open neighborhood $O_k(z^\circ)$ of $z^\circ$ in $H^k$ as the set of curves $z\in O_k$ satisfying, for some fixed parameters $0<A,C,R,S<\infty$,
\begin{equation}\label{open:geometric}
\mathcal{A}(z)>A,
\quad\quad
\mathcal{C}(z)<C,
\quad\quad
\|z\|_{H^k}<R,
\end{equation}
and also
\begin{equation}\label{open:S}
\mathcal{S}(z)>S.
\end{equation}
From left to right, the conditions in \eqref{open:geometric} establish that the angle between $\partial_{\alpha}z$ and $\tau$ is uniformly non-perpendicular, which is necessary for our construction of the mixing zone (cf.~Rem.~\ref{Rem:diffeomorphism}), and that the chord-arc constant and the $H^k$-norm of $z$ are uniformly bounded respectively. Since we want $z^\circ\in O_k(z^\circ)$, necessarily $A<\mathcal{A}(z^\circ)=1$, $C>\mathcal{C}(z^\circ)$ and $R>\|z^\circ\|_{H^k}$. The condition \eqref{open:S} establishes that $z$ remains uniformly stable on $\mathrm{supp}\,\psi_0$. By Section \ref{sec:c} (cf.~Lemma \ref{lemma:stable}) we consider $S<2\eta(1-2s)$.

\begin{lemma}\label{lemma:energy}
Assume that there exists $z^\varepsilon\in C([0,T_\varepsilon];O_{k_\circ-2}(z^\circ))$ solving \eqref{eq:z:reg} for some $0<T_\varepsilon\ll 1$. Then, there exists $q\in\N$ satisfying
\begin{equation}\label{energy}
\frac{d}{dt}\mathcal{E}(z^\varepsilon)
\lesssim \mathcal{E}(z^\varepsilon)+\mathcal{E}(z^\varepsilon)^q,
\end{equation}
in terms of $A,C,R,S$ and $\|z^\circ\|_{H^{k_\circ}}$, but independently of $\varepsilon$.
\end{lemma}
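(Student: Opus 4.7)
The plan is to differentiate the four components of $\mathcal{E}(z^\varepsilon)$ separately and show each time derivative is polynomially controlled by $\mathcal{E}(z^\varepsilon)$. Self-adjointness of the mollifier $\phi_\varepsilon *$ in $L^2$ allows me to move $\phi_\varepsilon *$ onto the test side, reducing $\tfrac{d}{dt}\|z^\varepsilon\|_{H^{k_\circ-2}}^2$ to exactly the splitting $I + I_\circ + I_h$ already performed in Section 5.1, with $z^\varepsilon$ in place of $z$ and $E_\varepsilon$ in place of $E$.

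I would then bound each term as follows. For $I_\circ$, the data $\psi_0,\psi_1,c,\tau,\kappa,D^{(0)},E^{1)}$ depend only on the fixed $z^\circ$, so everything is controlled by $\mathcal{C}(z^\circ)$ and $\|z^\circ\|_{H^{k_\circ}}$. For $I_h$, since $h(t)$ is $\alpha$-independent, Cauchy--Schwarz reduces matters to bounding $\|h\|_{L^\infty}$, which by Lemmas \ref{lemma:E-Ba}--\ref{lemma:E1} is controlled via $\|z^\varepsilon\|_{H^3}$, $\mathcal{A}(z^\varepsilon)^{-1}$ and $\mathcal{C}(z^\varepsilon)$. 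For the main term $I$ I would repeat the Section 5.1.1 argument: the most singular piece $J$ decomposes as $J_\sigma + J_\Phi$ plus commutators. The C\'ordoba--C\'ordoba inequality combined with self-adjointness of $\Lambda$ produces
\[
4J_\sigma \leq \Big\|\Lambda\Big(\tfrac{\psi_0}{\partial_{\alpha}z_b^\varepsilon}\Big)_1\Big\|_{L^\infty} \|\partial_\alpha^{k_\circ-2}z^\varepsilon\|_{L^2}^2,
\]
which is legitimate thanks to the stability condition \eqref{open:S}, while $J_\Phi$ is handled through the bounded kernel \eqref{commutator}, and Lemma \ref{lemma:lot} takes care of all lower-order contributions. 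All constants depend only on $A,C,R,S$ and $\|z^\circ\|_{H^{k_\circ}}$, producing the bound $\tfrac{d}{dt}\|z^\varepsilon\|_{H^{k_\circ-2}}^2 \lesssim \mathcal{E}(z^\varepsilon) + \mathcal{E}(z^\varepsilon)^q$.

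For the geometric pieces $\mathcal{A}(z^\varepsilon)^{-1}$, $\mathcal{C}(z^\varepsilon)$ and $\mathcal{S}(z^\varepsilon)^{-1}$ I would follow the template of \cite{CCG11,CCFG13}: each quantity is Lipschitz in time with derivative controlled by a power of itself times $\|\partial_t z^\varepsilon\|_{C^{1,\delta}}$, via the standard approximation argument needed to differentiate a $\sup$/$\inf$. Since $k_\circ\geq 6$, the embedding $H^{k_\circ-3}\hookrightarrow C^{1,\delta}$ applies, and a direct inspection of the right-hand side of \eqref{eq:z:reg} shows that $\|\partial_t z^\varepsilon\|_{H^{k_\circ-3}}$ is polynomially bounded by $\mathcal{E}(z^\varepsilon)$. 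Summing the four contributions yields \eqref{energy} with constants independent of $\varepsilon$.

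The main obstacle is ensuring that the sign required by the C\'ordoba--C\'ordoba step survives the inner mollification defining $E_\varepsilon$: the denominator that actually appears is $(\phi_\varepsilon * \partial_\alpha z_b^\varepsilon)_1$ rather than $(\partial_\alpha z_b^\varepsilon)_1$. This is resolved by observing that, by Lemma \ref{lemma:stable}, $\partial_\alpha z_1^\circ \geq \eta(1-(2s)^\delta) > 0$ on $\mathrm{supp}\,\psi_0$, and combined with \eqref{open:S} and a small-time condition, $(\phi_\varepsilon * \partial_\alpha z_b^\varepsilon)_1$ remains strictly positive on $\mathrm{supp}\,\psi_0$ uniformly in $\varepsilon$ (equivalently, one may shrink $\mathrm{supp}\,\psi_0$ by a margin of order $\varepsilon$, at a cost that depends only on the fixed data $A,C,R,S$ of $O_{k_\circ-2}(z^\circ)$).
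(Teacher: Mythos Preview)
Your proposal is correct and follows exactly the route the paper indicates: its proof is the single sentence ``This is totally analogous to the a priori energy estimates of the previous section (see \cite{CCG11}),'' and your expansion---moving the outer mollifier onto the test side, reproducing the $I+I_\circ+I_h$ splitting with the C\'ordoba--C\'ordoba step for $J_\sigma$, and treating $\mathcal{A}^{-1},\mathcal{C},\mathcal{S}^{-1}$ as in \cite{CCG11,CCFG13}---is precisely what that sentence encodes. One small correction to your final paragraph: in the definition of $E_\varepsilon$ the inner mollifier acts only on the \emph{numerator} $\partial_\alpha\delta_\beta(\phi_\varepsilon*z_b)$, not on the kernel $1/\delta_\beta z_b$, so after linearizing the prefactor in $J_\sigma$ is $(\psi_0/\partial_\alpha z_b^\varepsilon)_1$ itself, whose sign is directly guaranteed by $\mathcal{S}(z^\varepsilon)>S$---the obstacle you describe does not actually arise.
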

\begin{proof}
This is totally analogous to the a priori energy estimates of the previous section (see \cite{CCG11}).
\end{proof}

\begin{proof}[Proof of Theorem \ref{thm:existsz}]
	\textit{Step 1.~Approximation sequence $z^\varepsilon$}. For all $\varepsilon>0$, a standard Picard iteration yields a time-dependent curve $z^\varepsilon\in C([0,T_\varepsilon];O_{k_\circ-2}(z^\circ))$ satisfying
	$$z^\varepsilon(t)=z^\circ+\int_0^t\phi_{\varepsilon}*F_\varepsilon(s,z^\circ,z^\varepsilon(s))\dif s.$$
	This $T_\varepsilon$ is taken in terms of the parameters defining $O_{k_\circ-2}(z^\circ)$ in such a way that the conditions \eqref{CA:0}\eqref{equiAS} hold. Thus, the operators $B_{a,b}$ (and so $h$) are well-defined.\\
	\indent As usual, the Gronwall's inequality applied to \eqref{energy} implies that  $\|z^\varepsilon\|_{C([0,T_\varepsilon];H^{k_\circ-2})}$ is uniformly bounded in $\varepsilon$. Furthermore, $z^\varepsilon$ satisfies \eqref{eq:z:reg} with $\|\partial_tz\|_{C([0,T_\varepsilon];H^{k_\circ-3})}$ uniformly bounded in $\varepsilon$. 
	We notice that our system is not autonomous but smooth in time.
	All these facts guarantee that the times of existence $T_\varepsilon$ do not vanish as $\varepsilon\to 0$, namely $T_\varepsilon\geq T> 0$. \\
\indent\textit{Step 2.~Convergence to $z$}.
By the Rellich-Kondrachov and the Banach-Alaoglu theorems, we may assume (taking a subsequence if necessary) that there exists $z\in C([0,T];O_{k_\circ-2}(z^\circ))$ such that $z^\varepsilon\rightarrow z$ in $C_tH^{k_\circ-3}$ and also $\partial_{\alpha}^{k_\circ-2}z^\varepsilon\rightharpoonup \partial_{\alpha}^{k_\circ-2}z$ as $\varepsilon\to 0$. Furthermore, $\partial_tz\in C([0,T];H^{k_\circ-3})$. Finally, it follow that $z$ solves \eqref{eq:z} and satisfies \eqref{hyp:z}-\eqref{equiCA}.
\end{proof}

\section{Proof of the main results and generalizations}\label{sec:generalizations}

In this section we glue the several proofs of the previous sections to gain clarity of how the Theorems \ref{thm:main:1} and \ref{thm:main:2} are proved. In addition, we recall how this construction is generalized for piecewise constant coarse-grained densities.

\subsection{Proof of Theorems \ref{thm:main:1} and \ref{thm:main:2}}

First of all we construct the growth-rate $c$ and the partition of the unity $\{\psi_0,\psi_1\}$ as in \eqref{c} and \eqref{partition} respectively, in terms of $z^\circ$ and some small parameters $\eta,s$ (e.g.~$\eta=s=\tfrac{1}{4}$, $\delta=1$). By Lemma \ref{lemma:c}, this $c$ satisfies the inequality \eqref{c:hull} and the regularity condition \eqref{hyp:c} (indeed $c\in C^\infty$).

Once these functions are fixed, the Theorem \ref{thm:existsz} implies the existence of a time-dependent pseudo-interface $z$ satisfying the equation \eqref{eq:z} and the regularity conditions \eqref{hyp:z}-\eqref{equiCA} for some $T\ll 1$. By Theorem \ref{thm:subsolution}, this $z$ satisfies the growth conditions \eqref{bcond:1}\eqref{cond:z}.

Next, we construct the mixing zone $\Mixzone$ and the non-mixing zones $\Omega_{\pm}$ by \eqref{Mixzone}\eqref{Omegat} respectively. Then, we define the triplet $(\bar{\rho},\bar{v},\bar{m})$ by \eqref{density}\eqref{velocity}\eqref{m}. Hence, by Proposition \ref{conditions:z,gamma} and Lemma \ref{conditions:z}, $(\bar{\rho},\bar{v},\bar{m})$ is a subsolution to IPM for some $0<T'\leq T$.

Finally, the h-principle in IPM (Theorem \ref{thm:hprinciple}) yields infinitely many mixing solutions to IPM starting from \eqref{rho0}\eqref{Omega0}.\\

The proof of Theorem \ref{thm:main:2} is analogous to the one of Theorem \ref{thm:main:1}. The main difference for the asymptotically flat case is that, since the domain of integration is $\R$ instead of $\T$, most of the integrals are taken with the Cauchy's principal value at infinity. 
In this case, \eqref{Ind} reads as
\begin{equation}\label{newindex}
\begin{split}
\mathrm{Ind}_{z_+(t)}(x)
&=\tfrac{1}{2}\car{\Omega_{+}(t)}(x),\\
\mathrm{Ind}_{z_-(t)}(x)
&=\tfrac{1}{2}(1-\car{\Omega_{-}(t)}(x)),
\end{split}
\end{equation}
which changes the limits \eqref{v:limits} but not the result. 
The proof of Theorem \ref{thm:main:2} in the $x_1$-periodic case is even closer to the one of Theorem \ref{thm:main:1}. In this case \eqref{newindex} holds as well, but we do not have to deal with the infinity since the domain is $\T$.

\subsection{Piecewise constant coarse-grained densities}\label{sec:Piecewise}

Following \cite{ForsterSzekelyhidi18,NoisetteSzekelyhidi20} we split the mixing zone into several levels $L=\{\lambda_j\,:\,1\leq|j|\leq N\}$ for $N\geq 1$ with
$$\lambda_j=\sgn j\tfrac{2|j|-1}{2N-1},$$
namely we consider
$$\Mixzone^j(t):=\{z_{\lambda}(t,\alpha)\,:\,c(\alpha)>0,\,\lambda\in(-\lambda_j,\lambda_j)\},$$
with $z_\lambda$ defined as in \eqref{Mixzone:z},
which satisfies $\Mixzone^1\subset\cdots\subset \Mixzone^N=:\Mixzone$. In addition we define $\Omega_{\pm}$ as in \eqref{Omegat} (or \eqref{OmegatR}).

Analogously to \cite{ForsterSzekelyhidi18,NoisetteSzekelyhidi20}, we define the piecewise constant (coarse-grained) density as ($|L|=2N$)
\begin{equation}\label{piecewise:rho}
\bar{\rho}(t,x)
:=\frac{2}{|L|}\sum_{b\in L}\mathrm{Ind}_{z_b(t)}(x)-1,
\end{equation}
for the closed case \eqref{Omega0}, while for asymptotically flat curves \eqref{Omega0R} the definition \eqref{piecewise:rho} needs to remove the last $-1$.
Observe that $\bar{\rho}=\pm 1$ on $\Omega_{\pm}$ while $\bar{\rho}$ approaches the linear profile in \cite{Szekelyhidi12,CCFpp,CFM19} inside the mixing zone.

Analogously to \eqref{velocity}, the Biot-Savart law yields
\begin{equation}\label{v:N}
\begin{split}
\bar{v}(t,x)
&=-\left(\frac{1}{\pi i|L|}\sum_{b\in L}\int\frac{(\partial_{\alpha}z_b(t,\beta))_2}{x-z_b(t,\beta)}\dif\beta\right)^*\\
&=-\frac{1}{\pi|L|}\sum_{b\in L}\int\left(\frac{1}{x-z_b(t,\beta)}\right)_1\partial_{\alpha}z_b(t,\beta)\dif\beta,
\quad\quad
x\neq z_b(t,\beta).
\end{split}
\end{equation}

Analogously to \eqref{m}, we write the relaxed momentum as
$$\bar{m}:=\bar{\rho}\bar{v}-(1-\bar{\rho}^2)(\gamma+\tfrac{1}{2}i),$$
in terms of some
$$\gamma:=\sum_{j=1}^N\nabla^\perp g_j\car{\Mixzone^j},$$
with $g_j(t,x)$ to be determined. Hence, analogously to \eqref{G}, we define $g_j$ in $(\alpha,\lambda)$-coordinates as
$$G_j(t,\alpha,\lambda)
:=\int_{\alpha_1}^{\alpha}\left(\sum_{a=\pm \lambda_j}\frac{\lambda+a}{2}(\partial_tz-B_a)\cdot\partial_{\alpha}z_a^\perp-\frac{1}{N}(\lambda_ jc\tau+\tfrac{1}{2})\cdot\partial_{\alpha}z_{\lambda}\right)\dif\alpha',$$
where
$$B_a:=\sum_{b\in L}B_{a,b}.$$
Finally, using that
$$\frac{1}{N}\sum_{j=1}^N\lambda_j=\frac{N}{2N-1},$$
the condition $|\gamma|<\tfrac{1}{2}$
yields the more general regime for $c$ given in \eqref{c:regime:optimal} as $N\to\infty$ (cf.~\eqref{|gamma|}) . The rest follows analogously to the case $N=1$ (see \cite{ForsterSzekelyhidi18,NoisetteSzekelyhidi20}).
\bigskip

\textbf{Acknowledgements}.
AC, DF and FM 
acknowledge financial support from the Spanish Ministry of Science and Innovation through the Severo Ochoa Programme for Centres of Excellence in R\&D (CEX2019-000904-S) and the ICMAT Severo Ochoa grant SEV2015-0554.
AC is partially supported by the MTM2017-89976-P and
the Europa Excelencia program ERC2018-092824. 
DF is partially supported by the Line of excellence for University Teaching Staff between CM and UAM.
DF and FM are partially supported by the ERC Advanced Grant 834728 and
by the MTM2017-85934-C3-2-P.

\appendix
\section{The pressure}

\begin{lemma}\label{lemma:p}
Let $(\rho,v)$ be a mixing solution from Theorem \ref{thm:main:1} or \ref{thm:main:2}. Then, there exists a pressure $p$ satisfying the Darcy's law
\begin{equation}\label{IPM:weak:3:p}
\int_0^t\int_{\R^2}((v+\rho i)\cdot\Phi-p\nabla\cdot\Phi)\dif x\dif s=0,
\end{equation}
for every test function $\Phi\in C_c^2(\R^3;\R^2)$. Observe that \eqref{IPM:weak:3:p} agrees with \eqref{IPM:weak:3} for $\Phi=\nabla^\perp\phi$.\\ Moreover, $v=\nabla^\perp\psi$ with $p$ and $\psi$ the continuous functions given by
\begin{align*}
(p+i\psi)(t,x)
&=\frac{1}{2\pi}\sum_{b=\pm}\int\log|x-z_b(t,\beta)|\partial_{\alpha}z_b(t,\beta)^*\dif\beta\\
&+\frac{1}{2\pi i}\int_{\Mixzone(t)}\frac{1}{x-y}(\rho-\bar{\rho})(t,y)\dif y.
\end{align*}
The first term corresponds to the macroscopic contribution of $\bar{\rho}$ (cf.~\eqref{bar{p}}). The second one is the fluctuation coming from $\rho-\bar{\rho}$ and vanishes outside $\Mixzone$ (cf.~\eqref{p-bar{p}}).\\
Furthermore, for any fixed $\mathscr{E}\in C(\R_+;\R_+)$ with $\mathscr{E}(r)>0$ for $r>0$, we can select these (infinitely many) mixing solutions satisfying
\begin{equation}\label{p-bar{p}}
|((p+i\psi)-(\bar{p}+i\bar{\psi}))(t,x)|
\leq\mathscr{E}(\mathrm{dist}((t,x),\Omega_+\cup\Omega_-)),
\end{equation}
where
\begin{equation}\label{bar{p}}
(\bar{p}+i\bar{\psi})(t,x)
:=\frac{1}{2\pi}\sum_{b=\pm}\int\log|x-z_b(t,\beta)|\partial_{\alpha}z_b(t,\beta)^*\dif\beta.
\end{equation}
\end{lemma}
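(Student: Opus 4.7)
The plan is to solve for $(p,\psi)$ explicitly through the 2D Newtonian potential, decompose $\rho = \bar\rho + (\rho - \bar\rho)$ and analyze each piece, then use the flexibility of the h-principle to control the fluctuation. First I would observe that \eqref{IPM:weak:3:p} amounts to the distributional identity $\nabla p = -(v + \rho i)$, which is solvable because \eqref{IPM:weak:3} is exactly the curl-free condition on $v+\rho i$. Combined with \eqref{IPM:weak:2} giving $v = \nabla^\perp\psi$ for a stream function $\psi$, this yields in complex notation $\nabla(p+i\psi) = -i\rho$ and hence $\Delta(p+i\psi) = -i\nabla^*\rho$, exactly as in Section \ref{sec:Muskat} for the classical Muskat case. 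Convolving with the 2D Newton kernel $\tfrac{1}{2\pi}\log|x-y|$ and integrating by parts using $\nabla^*_y\log|x-y| = -1/(x-y)$ produces a provisional Cauchy-type representation of $p+i\psi$ in terms of $\rho$.

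Next I would split the integrand according to $\rho = \bar\rho + (\rho - \bar\rho)$. For the macroscopic part, $\bar\rho = \chi_{\Omega_+}-\chi_{\Omega_-}$ is piecewise constant with jumps on $\partial\Omega_+(t)\cup\partial\Omega_-(t)$, and applying the Lemma \ref{lemma:parts} computation behind Lemma \ref{rho:Dirac} to each partial derivative gives
\[
\nabla^*\bar\rho \;=\; i\sum_{a=\pm}\partial_\alpha z_a^* \,\delta_{z_a(t)}.
\]
Substituting into the Newtonian potential representation and simplifying produces exactly the expression \eqref{bar{p}} for $\bar p+i\bar\psi$. For the fluctuation, the h-principle (Theorem \ref{thm:hprinciple}) guarantees $\rho = \bar\rho$ outside $\Mixzone$, so $\rho-\bar\rho$ is bounded and compactly supported in $\Mixzone$; its contribution to $p+i\psi$ is then the Cauchy transform $\tfrac{1}{2\pi i}\int_{\Mixzone}(\rho-\bar\rho)/(x-y)\,dy$, obtained directly from the Newtonian potential after one integration by parts with no boundary terms.

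Continuity of $p+i\psi$ across $\partial\Omega_\pm$ follows because $\bar p + i\bar\psi$ is a logarithmic single-layer potential with $H^{k_\circ-2}$ densities on the chord-arc boundaries $z_\pm$, and the fluctuation is the Cauchy transform of an $L^\infty$ compactly supported function, hence continuous on $\R^2$. Verifying \eqref{IPM:weak:3:p} for general $\Phi\in C_c^2$ then amounts to taking the distributional gradient of the closed-form expression for $p+i\psi$ and recovering $-i\rho$ pointwise a.e.\ using $\Delta(p+i\psi) = -i\nabla^*\rho$ together with the Cauchy–Riemann structure of the representation.

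The main obstacle is the selection statement \eqref{p-bar{p}}. My plan is to exploit the iterative structure of the convex integration in \cite{Szekelyhidi12,CFM19}: at stage $n$ the density is perturbed by a bounded function oscillating at a frequency $\lambda_n$ that can be chosen freely, and a Riemann--Lebesgue estimate shows that the Cauchy transform of such a perturbation is small, uniformly on any compact set whose distance to $\partial\Mixzone$ exceeds $\lambda_n^{-1}$, with a quantitative rate in $\lambda_n$. Choosing the sequence $\lambda_n$ sufficiently fast-growing during the iteration, one can arrange that the cumulative Cauchy transform of $\rho-\bar\rho$ is bounded at $(t,x)$ by any preassigned positive modulus of the distance to $\Omega_+\cup\Omega_-$, yielding \eqref{p-bar{p}} for the prescribed $\mathscr{E}$. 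The hard part will be to carry out this quantitative frequency tuning while preserving the convex integration constraints \eqref{hull} at every step; this requires revisiting the perturbation lemma of \cite{Szekelyhidi12,CFM19} to track the Cauchy transform of each correction explicitly and to propagate the estimate through the iteration.
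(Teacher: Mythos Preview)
Your outline is correct and arrives at the same formula by essentially the same decomposition $\rho=\bar\rho+(\rho-\bar\rho)$, but the paper takes a more concrete route that you may find cleaner. Rather than working directly with the weak solution and the Poisson equation $\Delta(p+i\psi)=-i\nabla^*\rho$, the paper goes back to the convex integration approximating sequence $(\rho_k,v_k)\to(\rho,v)$ and exploits the explicit potential structure of the building blocks: by construction of the localized plane waves one has $(\rho_k-\bar\rho,\,v_k-\bar v)=(\Delta\varphi_k',-\nabla^\perp\partial_1\varphi_k')$ for some smooth $\varphi_k'$ compactly supported in $\Mixzone$. This immediately gives $\psi_k'=-\partial_1\varphi_k'$ and reduces $\nabla(p_k'+i\psi_k')=-i\rho_k'$ to $p_k'+i\psi_k'=-i\nabla^*\varphi_k'$ (Liouville kills the entire remainder); the Cauchy--Pompeiu formula on the smooth compactly supported $\varphi_k'$ then yields the fluctuation term, and one passes to the limit using $\rho_k\to\rho$ in $C_tL^q_{\mathrm{loc}}$. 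The advantage of this route is that it sidesteps having to justify the Newtonian potential representation directly on the rough limit $\rho$ (which is only $L^\infty$ and not integrable), and it gives $v=\nabla^\perp\psi$ for free from the potential $\varphi_k'$ rather than from an abstract Hodge argument requiring decay at infinity. Your top-down approach works too once you handle the non-integrability of $\rho$ by doing the split \emph{before} inverting the Laplacian, as you indicate.

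For the selection statement \eqref{p-bar{p}}, your plan via frequency tuning and Riemann--Lebesgue is exactly the mechanism behind the quantitative h-principle; the paper simply invokes \cite{CFM19} rather than redoing that argument, so there is no need to ``revisit the perturbation lemma'' --- the required tracking of the Cauchy transform through the iteration is already packaged there.
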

\begin{proof}
	Notice that $\bar{v}=\nabla^\perp\bar{\psi}$ by \eqref{velocity}. In particular,
	$$\nabla(\bar{p}+i\bar{\psi})=-i\bar{\rho},$$
	in the sense of distributions. Following \cite{Szekelyhidi12,CFM19,Mengualpp} we consider the convex integration sequence $(\rho_k,v_k)\rightarrow(\rho,v)$ in $C_tL_{w^*}^\infty$. In fact, $\rho_k\rightarrow\rho$ in $C_tL_{\text{loc}}^q$ for all $1<q<\infty$ (see \cite[p.~12]{Mengualpp}).
	Let us split $\rho_k=\bar{\rho}+\rho_k'$, $v_k=\bar{v}+v_k'$ and $p_k:=\bar{p}+p_k'$ for some $p_k'$ to be determined.
	By construction (see \cite[Lemma 3.1]{Mengualpp}) $(\rho_k',v_k')=(\Delta\varphi_k',-\nabla^\perp\partial_1\varphi_k')$ for some real-valued function $\varphi_k'$ which is smooth and compactly supported on $\Mixzone$. Notice that $v_k'=\nabla^\perp\psi_k'$ for $\psi_k':=-\partial_1\varphi_k'$.
	Hence, $p_k$ satisfies $\nabla(p_k+i\psi_k)=-i\rho_k$ if and only if $p_k'$ satisfies
	$$\nabla(p_k'+i\psi_k')=-i\rho_k'.$$
	Therefore ($\Delta=\nabla\nabla^*$)
	$$p_k'+i\psi_k'
	=-i\nabla^*\varphi_k'
	+f_k,$$
	for some (time-dependent) entire function $f_k$. Since $\varphi_k'$ is compactly supported on $\Mixzone$, necessarily $(f_k(t,x))_2\rightarrow 0$ as $|x|\to\infty$.
	Therefore, the Liouville's theorem ($e^{if_k(t)}$ is entire and bounded) implies that $f_k$ equals to a (time-dependent) real constant. Hence, as we are choosing $p_k'$, we may assume that $f_k=0$. Finally, the Cauchy-Pompeiu's formula yields
	$$\nabla^*\varphi_k'(t,x)
	=\frac{1}{2\pi }\int_{\Mixzone(t)}\frac{1}{x-y}\rho_k'(t,y)\dif y.$$
	This concludes the proof by taking the limit $k\to\infty$. The inequality \eqref{p-bar{p}} can be guaranteed by following the proof of the quantitative h-principle in \cite{CFM19}.
\end{proof}

\section{Auxiliary lemmas}

\begin{lemma}\label{geometriclemma}
Let $z\in C([0,T];C^{1,\delta}(\T;\R^2))$.
Assume that, for some parameters $0<A,C,R,S<\infty$,
$$\mathcal{A}(z(t))>A,
\quad\quad\mathcal{C}(z(t))<C,
\quad\quad |\partial_{\alpha}z(t)|_{C^\delta}<R,
\quad\quad\mathcal{S}(z(t))>S,$$
for all $0\leq t\leq T$.
Then, there exists $0<T'(A,C,R,S,\delta,\|c\tau\|_{C^{1,\delta}})\leq T$ such that the equi-chord-arc condition holds:	
\begin{equation}\label{CA:0}
|z_{\lambda}(t,\alpha)-z_{\mu}(t,\alpha-\beta)|^2
\geq D\left(\frac{\beta^2}{(2C)^2}+((\lambda-\mu)tc(\alpha))^2\right),
\end{equation}
for all $\alpha,\beta\in\T$, $\lambda,\mu\in[-1,1]$ and $0\leq t\leq T'$, where
$D\equiv 1-\sqrt{1-(A/2)^2}$. In addition,
\begin{equation}\label{equiAS}
\sup_{\lambda\in[-1,1]}\mathcal{A}(z_\lambda(t))>A/2,
\quad\quad
\sup_{\lambda\in[-1,1]}\mathcal{S}(z_\lambda(t))>S/2.
\end{equation}
\end{lemma}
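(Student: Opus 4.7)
The plan proceeds in two stages, where the angle and stability bounds \eqref{equiAS} serve both as a conclusion and as an intermediate tool for the chord-arc estimate \eqref{CA:0}.

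\textbf{Stage 1 (angle and stability).} The supremum statement \eqref{equiAS} follows immediately by taking $\lambda=0$, since $z_0(t,\cdot)=z(t,\cdot)$ and hence $\mathcal{A}(z_0(t))=\mathcal{A}(z(t))>A>A/2$, and analogously for $\mathcal{S}$. For Stage 2 I would first upgrade this to the \emph{uniform} statement that $\mathcal{A}(z_\lambda(t))>A/2$ and $(\partial_\alpha z_\lambda)_1>S/2$ on $\mathrm{supp}\,\psi_0$ hold for every $\lambda\in[-1,1]$ and $0\leq t\leq T'$. This is a direct perturbation argument from the identity $\partial_\alpha z_\lambda=\partial_\alpha z-\lambda t\,\partial_\alpha(c\tau^\perp)$, which yields $\|\partial_\alpha z_\lambda(t)-\partial_\alpha z(t)\|_{L^\infty}\leq t\|c\tau\|_{C^{1,\delta}}$ uniformly in $\lambda$; both quantities depend continuously on $\partial_\alpha z_\lambda$, so the bounds persist for $T'$ small in terms of $A,S,\|c\tau\|_{C^{1,\delta}}$.

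\textbf{Stage 2 (equi-chord-arc).} The key algebraic identity, obtained by subtracting and adding the common term $z_\mu(t,\alpha)$, is
$$z_\lambda(t,\alpha)-z_\mu(t,\alpha-\beta)=u_\mu-(\lambda-\mu)t\,c(\alpha)\tau(\alpha)^\perp,\qquad u_\mu:=z_\mu(t,\alpha)-z_\mu(t,\alpha-\beta).$$
From Stage 1 and a standard perturbation of the chord-arc constant, the curves $z_\mu(t,\cdot)$ satisfy $|u_\mu|\geq\beta/(2C)$ uniformly for $0\leq t\leq T'$. Next I would establish the transversality estimate $|u_\mu\cdot\tau(\alpha)^\perp|/|u_\mu|\leq\sqrt{1-(A/2)^2}$: starting from
$$u_\mu\cdot\tau(\alpha)=\int_0^\beta\partial_\alpha z_\mu(t,\alpha-\beta+s)\cdot\tau(\alpha)\,ds,$$
I would apply the pointwise angle condition $\partial_\alpha z_\mu\cdot\tau>(A/2)|\partial_\alpha z_\mu|$ from Stage 1 together with the $C^\delta$-regularity $|\tau(\alpha)-\tau(\alpha-\beta+s)|\lesssim|\beta|^\delta$, absorbing the resulting Hölder error using the chord-arc lower bound $|u_\mu|\geq\beta/(2C)$. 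The conclusion then follows from the elementary quadratic form inequality
$$|a\,e_1+b\,e_2|^2\geq(1-|e_1\cdot e_2|)(a^2+b^2)\qquad(a,b\in\mathbb{R},\ |e_1|=|e_2|=1),$$
applied with $e_1:=u_\mu/|u_\mu|$, $e_2:=\tau(\alpha)^\perp$, $a:=|u_\mu|$, $b:=-(\lambda-\mu)tc(\alpha)$, giving
$$|w|^2\geq D\bigl(|u_\mu|^2+((\lambda-\mu)tc(\alpha))^2\bigr)\geq D\Bigl(\tfrac{\beta^2}{(2C)^2}+((\lambda-\mu)tc(\alpha))^2\Bigr),$$
with $D=1-\sqrt{1-(A/2)^2}$, as claimed.

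\textbf{Main obstacle.} The principal technical point is the transversality estimate for the chord direction $u_\mu/|u_\mu|$. The pointwise angle condition controls the tangent directions $\partial_\alpha z_\mu$, but transferring this to arbitrary chord directions requires a careful balance between the $C^\delta$-control of $\tau$ and the chord-arc lower bound on $|u_\mu|$: the argument is cleanest for $\beta$ near zero (where $u_\mu\approx\beta\partial_\alpha z_\mu$) and for $\beta$ of order $\ell_\circ$ (where the perturbation is negligible compared to $|u_\mu|$), while the intermediate range is precisely where the Hölder error needs to be absorbed to produce a constant of the form claimed for $D$.
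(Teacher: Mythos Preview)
Your Stage 2 decomposition $z_\lambda-z_\mu'=u_\mu-(\lambda-\mu)tc(\alpha)\tau(\alpha)^\perp$ and the quadratic-form inequality are exactly the mechanism the paper uses, and your Stage 1 perturbation is how the paper disposes of \eqref{equiAS}. The gap is that the transversality estimate
\[
\frac{|u_\mu\cdot\tau(\alpha)^\perp|}{|u_\mu|}\leq\sqrt{1-(A/2)^2}
\]
is \emph{false} for general $\beta$: if $z^\circ$ is a circle and $|\beta|$ is close to $\ell_\circ/2$, the chord $u_\mu$ is essentially a diameter, hence perpendicular to the tangent at $\alpha$, so $|u_\mu\cdot\tau^\perp|/|u_\mu|\to 1$. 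Your integral argument via the pointwise angle condition and the H\"older control on $\tau$ only bounds the error by $O(|\beta|^\delta)$, which is useless once $|\beta|$ is of order one. So the quadratic-form route cannot close globally, and the difficulty is not an ``intermediate range'' to be absorbed but rather that transversality genuinely fails away from $\beta=0$.

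The paper resolves this with a clean binary split at a threshold $r\sim(A/(CR))^{1/\delta}$. For $|\beta|\leq r$ the argument is essentially yours: compare $u_\mu/|u_\mu|$ directly with $\partial_\alpha z/|\partial_\alpha z|$ using $|\triangle_\beta z_\mu-\partial_\alpha z|\leq R|\beta|^\delta+t\|c\tau\|_{C^1}$, obtain $|u_\mu\cdot\tau|/|u_\mu|\geq A/2$, and conclude via the quadratic expansion. For $|\beta|>r$ the paper abandons transversality entirely and uses the crude triangle inequality
\[
|z_\lambda-z_\mu'|\geq|u_\mu|-2t\|c\|_{C^0}\geq\frac{|\beta|}{2C}-2t\|c\|_{C^0};
\]
since $|\beta|$ is now bounded below by $r$, taking $T'$ small in terms of $r$ and $\|c\|_{C^0}$ reduces \eqref{CA:0} to the elementary check $(1-\tau)^2\geq D(1+\tau^2)$ for $\tau:=4Ct\|c\|_{C^0}/|\beta|$ small. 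Your parenthetical remark that for large $\beta$ ``the perturbation is negligible compared to $|u_\mu|$'' is exactly this second mechanism, but note that it \emph{replaces} the transversality argument rather than salvaging it.
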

\begin{proof}
First of all notice that $A<1$, and thus $D< 1$ as well.
In particular, \eqref{CA:0} holds for $tc(\alpha)=0$. Henceforth, let $c(\alpha)>0$ and $0<t\leq T'$ for some $0<T'\leq T$ to be determined. Notice that we can take $T'$ satisfying (recall Sec.~\ref{sec:Intro} Notation)
\begin{equation}\label{CA:1}
|\delta_\beta z_\mu|
\geq |\delta_\beta z|
-t|\delta_\beta(c\tau)|\\
\geq\left(\frac{1}{C}-t|c\tau|_{C^1}\right)|\beta|
\geq\frac{|\beta|}{2C}.
\end{equation}
We split the proof into two cases, depending on the following parameter $$r\equiv\left(\frac{A}{2^3CR}\right)^{1/\delta}.$$
\indent\textit{Case $|\beta|\leq r$}. By writing,
\begin{equation}\label{CA:4}
z_{\lambda}-z_{\mu}'
=\delta_\beta z_\mu-(\lambda-\mu)tc\tau^\perp,
\end{equation}
we split the l.h.s.~of \eqref{CA:0} into
\begin{equation}\label{CA:2}
|z_{\lambda}-z_{\mu}'|^2
=|\delta_\beta z_\mu|^2+((\lambda-\mu)tc)^2
-2(\lambda-\mu)tc\delta_\beta z_\mu\cdot\tau^\perp.
\end{equation}
Let us analyze its third term. By our choice of $r$ and using $|\partial_{\alpha}z|\geq 1/C$, we can take $T'$ satisfying
$$\left|\frac{\delta_\beta z_\mu}{|\delta_\beta z_\mu|}-\frac{\partial_{\alpha}z}{|\partial_{\alpha}z|}\right|
\leq 2\frac{\left|\triangle_\beta z_\mu-\partial_{\alpha}z\right|}{|\partial_{\alpha}z|}
\leq 2C(|\partial_{\alpha}z|_{C^\delta}|\beta|^\delta+t|c\tau|_{C^1})
\leq A/2.$$
Then, by adding and subtracting $\partial_{\alpha}z/|\partial_{\alpha}z|$, we deduce that
(recall \eqref{Mixzone:z}\eqref{t})
$$\frac{|\delta_\beta z_\mu\cdot \tau|}{|\delta_\beta z_\mu|}
\geq
\frac{\partial_{\alpha}z}{|\partial_{\alpha}z|}\cdot\tau
-\left|\frac{\delta_\beta z_\mu}{|\delta_\beta z_\mu|}-\frac{\partial_{\alpha}z}{|\partial_{\alpha}z|}\right|
\geq A/2,$$
which implies that
\begin{equation}\label{CA:3}
(\delta_\beta z_\mu\cdot\tau^\perp)^2
=|\delta_\beta z_\mu|^2-(\delta_\beta z_\mu\cdot\tau)^2
\leq (1-(A/2)^2)|\delta_\beta z_\mu|^2.
\end{equation}
Finally, by applying \eqref{CA:1} and \eqref{CA:3} into \eqref{CA:2}, we deduce that
\begin{align*}
|z_{\lambda}-z_{\mu}'|^2
&\geq(1-\sqrt{1-(A/2)^2})(|\delta_\beta z_\mu|^2+((\lambda-\mu)tc)^2)\\
&\geq(1-\sqrt{1-(A/2)^2})\left(\frac{\beta^2}{(2C)^2}+((\lambda-\mu)tc)^2\right).
\end{align*}
\indent\textit{Case $|\beta|>r$}. On the one hand, by applying \eqref{CA:1}\eqref{CA:4}, the l.h.s.~of \eqref{CA:0} can be bounded from below as
$$|z_{\lambda}-z_{\mu}'|
\geq|\delta_\beta z_\mu|-2t\|c\|_{C^0}
\geq\frac{|\beta|}{2C}-2t\|c\|_{C^0}.$$
On the other hand, the r.h.s.~of \eqref{CA:0} can be bounded from above as
$$\frac{\beta^2}{(2C)^2}+((\lambda-\mu)tc)^2
\leq\frac{\beta^2}{(2C)^2}+(2t\|c\|_{C^0})^2.
$$
Thus, it is enough to guarantee that
$$\left(\frac{|\beta|}{2C}-2t\|c\|_{C^0}\right)^2
\geq D\left(\frac{\beta^2}{(2C)^2}+(2t\|c\|_{C^0})^2\right),$$
or equivalently
$$\frac{(1-\tau)^2}{1+\tau^2}\geq D
\quad\textrm{with}\quad
\tau\equiv\frac{4C\|c\|_{C^0}}{|\beta|}t.$$
Since $D\ll 1$, this holds for all $|\beta|>r$ by taking $T'$ small enough.

Finally, it is clear that \eqref{equiAS} holds for small times.
\end{proof}

\begin{lemma}\label{lemma:lot} The remaining terms of $I$ from Section \ref{sec:EEI} are lower order terms.
\end{lemma}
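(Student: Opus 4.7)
The plan is to expand $\partial_{\alpha}^k(\psi_0 E)$ by the Leibniz rule, isolate the single piece already treated (the $J = J_\sigma + J_\Phi$ of Section~5.1.1), and prove that every other piece is bounded by a polynomial in the energy $\mathcal{E}(z)$. Write $E = \sum_{b=\pm} B_{b,b}$ and set $K_b := (1/\delta_\beta z_b)_1$, so that
\begin{equation*}
\partial_\alpha^k(\psi_0 B_{b,b})(\alpha) = \frac{1}{2\pi}\sum_{j_1+j_2+j_3=k}\binom{k}{j_1,j_2,j_3}\partial_\alpha^{j_1}\psi_0 \int_\T \partial_\alpha^{j_2}K_b\,\partial_\alpha^{j_3+1}\delta_\beta z_b\,d\beta.
\end{equation*}
The main term $J$ corresponds to $(j_1,j_2,j_3) = (0,0,k)$; all other triples give remainders.

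For remainders with $j_1 \geq 1$ (derivatives on $\psi_0$) the integrand is at worst Lipschitz: after adding/subtracting $\partial_\alpha^{j_3+1}z_b$ (a Taylor cancellation that kills the singularity of $K_b$ at $\beta=0$), one estimates the integral in $\beta$ by $\mathcal{C}(z)\|z\|_{C^{j_3+2}}$, and then the $\alpha$-integral against $\partial_\alpha^k z$ by Cauchy--Schwarz. Using Sobolev embedding $H^{k_\circ-3}\hookrightarrow C^{k_\circ-4}$ (recall $k_\circ \geq 6$), these contributions are controlled by $\|\psi_0\|_{C^k}\,P(\mathcal{C}(z),\|z\|_{H^{k_\circ-2}})\,\|\partial_\alpha^k z\|_{L^2}^2$.

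The genuine remainders are those with $j_2 \geq 1$, where derivatives fall on the kernel. By the recurrence $\partial_\alpha K_b = -(\partial_\alpha\delta_\beta z_b)/(\delta_\beta z_b)^2 + \text{(Cauchy remainder)}$, iterated application yields an expression of the form
\begin{equation*}
\partial_\alpha^{j_2}K_b = \sum_{\ell}\frac{P_\ell(\partial_\alpha^{\leq j_2}z_b, \partial_\alpha^{\leq j_2}z_b')}{(\delta_\beta z_b)^{\ell+1}},
\end{equation*}
with each $P_\ell$ a polynomial of degree depending on $\ell$. The chord-arc bound \eqref{CA:0} gives $|\delta_\beta z_b|^{-\ell-1}\lesssim |\beta|^{-\ell-1}$; pairing with the numerator factor $\partial_\alpha^{j_3+1}\delta_\beta z_b$ and using Taylor expansions up to the appropriate order (enough to absorb $|\beta|^{-\ell-1}$ by $|\beta|^{\ell+1}\|z\|_{C^{\ell+j_3+2}}$) produces a uniformly integrable kernel. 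Since $j_2+j_3 = k \leq k_\circ - 2$ and at most $k-1$ derivatives of $z$ appear in any single factor in these remainders, all the Sobolev norms involved are controlled by $\|z\|_{H^{k_\circ-2}}$ via Gagliardo--Nirenberg-type interpolations; the worst derivative count $\partial_\alpha^{k+1}\delta_\beta z_b$ only occurred in the main term $J$.

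The expected main obstacle is bookkeeping: correctly identifying, for each Leibniz triple $(j_1,j_2,j_3)$, how many Taylor cancellations are needed inside the $\beta$-integral and which $H^{k_\circ-2}$-controlled norm absorbs the resulting polynomial. Once the accounting is done, the framework is identical to that in \cite{CCG11,CCFG13} and each remainder is bounded by $P(\mathcal{E}(z))$ for some polynomial $P$. Adding the contributions from $I_\circ$ (which is controlled because $E^{1)}$, $\kappa$, $D^{(0)}$, $c$ and $\tau$ depend only on $z^\circ$) and $I_h$ (controlled because $h$ is independent of $\alpha$ and $h\psi_1$ is smooth by Corollary~\ref{cor:monotone}), one obtains the energy inequality \eqref{energy} claimed in Lemma~\ref{lemma:energy}.
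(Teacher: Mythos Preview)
Your Leibniz decomposition matches the paper's, but the subsequent analysis has a genuine gap. The assertion that ``at most $k-1$ derivatives of $z$ appear in any single factor in these remainders'' is false. In the triple $(j_1,j_2,j_3)=(0,k,0)$, the Fa\`a di Bruno expansion of $\partial_\alpha^k K_b$ contains a term with $\partial_\alpha^k\delta_\beta z_b$ in the numerator (the partition $n_k=1$), giving
\[
\int_\T \psi_0\,\partial_\alpha^k z\cdot\int_\T\left(\frac{\partial_\alpha^k\delta_\beta z_b}{(\delta_\beta z_b)^2}\right)_1\partial_\alpha\delta_\beta z_b\,d\beta\,d\alpha.
\]
The analogous issue arises for $(0,1,k-1)$, $(1,0,k-1)$, and $(0,k-1,1)$ with $n_{k-1}=1$. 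For these, your Taylor strategy would require $\|z\|_{C^{k+1}}$ or $\|\partial_\alpha^k z_b\|_{C^\delta}$, neither of which is controlled by $\|z\|_{H^{k_\circ-2}}$ when $k=k_\circ-2$.

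The fix is exactly the trick already used for $J$ in Section~\ref{sec:EEI}: compare the singular factor to a Hilbert kernel. For instance, in the $(0,k,0)$ case one writes
\[
\frac{(\partial_\alpha\delta_\beta z_b)_l}{(\delta_\beta z_b)^2}=\frac{(\partial_\alpha^2 z_b)_l}{(\partial_\alpha z_b)^2\,(2\tan(\beta/2))}+\text{bounded commutator},
\]
so the inner integral becomes a Hilbert transform acting on $\partial_\alpha^k z_b$ (controlled in $L^2$) plus a lower-order piece. The paper singles out precisely these ``second'' and ``third'' singular terms and handles them this way; the remaining harmless terms ($j_2,j_3\le k-2$) are then bounded by your argument. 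Your final paragraph on $I_\circ$ and $I_h$ is correct but lies outside the scope of this particular lemma.
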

\begin{proof}
By combining the general Leibniz rule applied to $(\psi_0,K_b,\partial_{\alpha}\delta_\beta z_b)$ where
$$K_b(t,\alpha,\beta):=\left(\frac{1}{\delta_\beta z_b(t,\alpha)}\right)_1,$$
with the Fa\`a di Bruno's formula applied to the kernel $K_b$, we split ($j=(j_0,j_1,j_2)$)
$$I=\frac{1}{2\pi}\sum_{b=\pm}\sum_{|j|=k}
\sum_{n\in\pi_{j_1}}\binom{k}{j}(-1)^{|n|}C_nI_b(j,n),$$
where 
$$I_b(j,n)
:=\int_{\T}(\partial_{\alpha}^{j_0}\psi_0)\partial_{\alpha}^kz\cdot\int_{\T}
\left(\frac{\prod_{i=1}^{j_1}(\partial_{\alpha}^i\delta_\beta z_b)^{n_i}}{(\delta_\beta z_b)^{|n|+1}}\right)_1
(\partial_{\alpha}^{j_2+1}\delta_\beta z_b)\dif\beta\dif\alpha,$$
with $\pi_{j_1}:=\{n\in\N_0^{j_1}\,:\,n_1+2n_2+\cdots+j_1n_{j_1}=j_1\}$ and
$$C_n:=\frac{|n|!j_1!}{n_1!1!^{n_1}\cdots n_{j_1}!j_1!^{n_{j_1}}}>0.$$
The most singular term $j=(0,0,k)$ ($\Rightarrow n=0$) has been analyzed in Section \ref{sec:EEI}.\\

\textit{Second singular term}. Let us consider $j=(0,k,0)$. If $n_k=1$ then
$$I_b(j,n)=\int_{\T}\psi_0\partial_{\alpha}^kz\cdot\int_{\T}
\left(\frac{\partial_{\alpha}^k\delta_\beta z_b}{(\delta_\beta z_b)^{2}}\right)_1
\partial_{\alpha}\delta_\beta z_b\dif\beta\dif\alpha.$$
By splitting $\partial_{\alpha}\delta_\beta z_b$ into its real and imaginary part and comparing
$$\frac{(\partial_{\alpha}\delta_\beta z_b)_l}{(\delta_\beta z_b)^{2}}
\sim\frac{(\partial_{\alpha}^2z_b)_l}{(\partial_{\alpha}z_b)^2(2\tan(\beta/2))},
\quad\quad l=1,2,$$
we obtain a Hilbert transform acting on $\partial_{\alpha}^kz_b$ while the commutator is a bounded kernel as in \eqref{commutator}. If $n_k=0$, notice that for any $k\geq 3$ we have
$n_{k-1}\leq\frac{k}{k-1}<2,$
that is $n_{k-1}=0$ or $1$ (the case $k<3<k_\circ-2$ is easier). If $n_{k-1}=1$ ($\Rightarrow n_1=1$) then simply
$$|I_b(j,n)|\lesssim
\mathcal{C}(z)^{3}\|\psi_0\|_{L^\infty}\|\partial_{\alpha}^kz\|_{L^2}|\partial_{\alpha}z_b|_{C^{1}}^{2}|\partial_{\alpha}^{k-1}z_b|_{C^\delta},$$
and for $n_{k-1}=0$
\begin{equation}\label{lot:1}
|I_b(j,n)|\lesssim \mathcal{C}(z)^{|n|+1}\|\psi_0\|_{L^\infty}\|\partial_{\alpha}^kz\|_{L^2}\|z_b\|_{C^{k-2,1}}^{|n|+1}.
\end{equation}

\textit{Third singular term}. Let us consider $j=(0,1,k-1)$ ($\Rightarrow n=1$):
$$I_b(j,n)
=\int_{\T}\psi_0\partial_{\alpha}^kz\cdot\int_{\T}
\left(\frac{\partial_{\alpha}\delta_\beta z_b}{(\delta_\beta z_b)^{2}}\right)_1
(\partial_{\alpha}^{k}\delta_\beta z_b)\dif\beta\dif\alpha.$$
This is analogous to the case $(0,k,0)$. The case $j=(1,0,k-1)$ is analogous too. Let us consider now $j=(0,k-1,1)$.
If $n_{k-1}=1$ then
$$I_b(j,n)
=\int_{\T}\psi_0\partial_{\alpha}^kz\cdot\int_{\T}
\left(\frac{\partial_{\alpha}^{k-1}\delta_\beta z_b}{(\delta_\beta z_b)^{2}}\right)_1
(\partial_{\alpha}^{2}\delta_\beta z_b)\dif\beta\dif\alpha,$$
and so
$$|I_b(j,n)|\lesssim
\mathcal{C}(z)^2\|\psi_0\|_{L^\infty}
\|\partial_{\alpha}^kz\|_{L^2}
|\partial_{\alpha}^{k-1}z_b|_{C^\delta}
|\partial_{\alpha}^2z_b|_{C^{1}}.$$
If $n_{k-1}=0$ then \eqref{lot:1} holds.
The case $j=(1,k-1,0)$ is analogous.\\

\textit{Harmless terms}. For $0\leq j_1,j_2\leq k-2$, simply
$$|I_{b}(j,n)|\lesssim \mathcal{C}(z)^{|n|+1}\|\partial_{\alpha}^{j_0}\psi_0\|_{L^\infty}\|\partial_{\alpha}^kz\|_{L^2}\|z_b\|_{H^k}^{|n|+1}.$$
This concludes the proof.
\end{proof}


\bibliographystyle{abbrv}
\bibliography{partially_unstable_Bib}

\begin{flushleft}
\quad\\
\textsc{Instituto de Ciencias Matem\'aticas, CSIC-UAM-UC3M-UCM, E-28049 Madrid, Spain.}\\
\textit{E-mail address:} \href{mailto:angel_castro@icmat.es}{\nolinkurl{angel_castro@icmat.es}}
	
\quad\\
\textsc{Departamento de Matem\'aticas, Universidad Aut\'onoma de Madrid;
	Instituto de Ciencias Matem\'aticas, CSIC-UAM-UC3M-UCM, E-28049 Madrid, Spain.}\\
\textit{E-mail address:} \href{mailto:daniel.faraco@uam.es}{\nolinkurl{daniel.faraco@uam.es}}

\quad\\
\textsc{Departamento de Matem\'aticas, Universidad Aut\'onoma de Madrid; Instituto de Ciencias Matem\'aticas (CSIC-UAM-UC3M-UCM), E-28049 Madrid, Spain.}\\
\textit{E-mail address:} \href{mailto:francisco.mengual@uam.es}{\nolinkurl{francisco.mengual@uam.es}}
\end{flushleft}

\end{document}